\newcommand{\comment}[1]{}
    \newcommand{\set}[1]{{\left\{#1\right\}}}
\newcommand{\pa}[1]{{\left(#1\right)}}
\newcommand{\gen}[1]{{\left\langle #1\right\rangle}}
\newcommand{\abs}[1]{{\left|#1\right|}}
\newcommand{\norm}[1]{{\left\|#1\right\|}}
\newcommand{\normal}{\mathrm}
\newcommand{\G}{\mathcal{G}}
\newcommand{\A}{\mathcal{A}}
 \newcommand{\U}{\mathcal{U}}
\newcommand{\T}{\mathbb{T}}
\newcommand{\Z}{\mathbb{Z}}
\newcommand{\R}{\mathbb{R}}
\newcommand{\C}{\mathbb{C}}
\newcommand{\teta}{\theta}
\newcommand{\eps}{\varepsilon}
\renewcommand{\Im}{\operatorname{Im}}
\newcommand{\Mat}{\operatorname{Mat}}
\newcommand{\GL}{\operatorname{GL}}
\newcommand{\mat}[1]{{\begin{matrix}#1\end{matrix}}}
\newcommand{\amat}[2]{{\begin{array}{#1}#2\end{array}}}
\newcommand{\eqsys}[1]{{\left\{\begin{array}{ll}#1\end{array}\right.}}
\newcommand{\conj}{\varphi\circ R_{2\pi\alpha}\circ \varphi^{-1}}
\newcommand{\average}[1]{\int_{\T^n}#1}
\newcommand{\media}[1]{\int_\T #1}
\newcommand{\tensor}{\otimes}
\newcommand{\inderiv}[1]{#1'^{-1}}    
\newcommand{\co}[1]{\textit{#1}}
\newcommand{\upp}[1]{\uppercase{#1}}
\newcommand{\id}{\operatorname{id}}
\newtheorem{prop}{Proposition}[section]
    \newtheorem{thm}{Theorem}[section]
    \newtheorem*{thm*}{Theorem}
    \newtheorem*{cor*}{Corollary}
    \newtheorem{cor}{Corollary}
    \newtheorem{lemma}{Lemma}
    \theoremstyle{remark}
\newtheorem{rmk}{Remark}[section]
\theoremstyle{definition}
\newtheorem{defn}{Definition}
\numberwithin{equation}{section}
\numberwithin{defn}{section}
\numberwithin{prop}{section}
\numberwithin{cor}{section}
\numberwithin{rmk}{section}
\newcounter{tmp}
\begin{document}
\author{Jessica Elisa Massetti}\address{Astronomy and Dynamical Systems, IMCCE (UMR 8028) - Observatoire de Paris, 77 Av. Denfert Rochereau 75014 Paris, France  \\
e-mail: jessica.massetti@obspm.fr\\
 \&\\
               Università di Roma Tre - Dipartimento di Matematica e Fisica, via della Vasca Navale 84, 00154 Roma, Italie }
 
 \title[Translated tori]{Normal form à la Moser for diffeomorphisms and generalization of R\"ussmann's translated curve theorem to higher dimension
}

\begin{abstract}
We prove a discrete time analogue of 1967 Moser's normal form of real analytic perturbations of vector fields possessing an invariant, reducible, Diophantine torus; in the case of diffeomorphisms too, the persistence of such an invariant torus is a phenomenon of finite co-dimension. Under convenient non-degeneracy assumptions on the diffeomorphisms under study (torsion property for example), this co-dimension can be reduced. As a by-product we obtain generalizations of R\"ussmann's translated curve theorem in any dimension, by a technique of elimination of parameters.
\end{abstract}
\maketitle
\setcounter{tocdepth}{1} 
\tableofcontents
\section{Introduction and results}
Let $\T = \R/2\pi\Z$, $a,b\in\R, a< b$ and consider the twist map $$P : \T\times [a,b] \to \T\times\R,\quad (\teta,r)\mapsto (\teta + \alpha(r), r),$$ where $\alpha'(r)>0$: $P$ preserves circles $r = r_0$, $r_0\in [a,b]$, and \co{twist} them by an angle which increases as $r$ does. \\Moser in \cite{Moser:1962} proved that for any $r_0\in (a,b)$ such that $\alpha(r_0)$ is Diophantine, if $Q$ is an area preserving diffeomorphism sufficiently close to $P$, it has an invariant curve near $r=r_0$ on which the dynamics is conjugated to the rotation $\teta\mapsto \teta + \alpha(r_0)$.\\
In 1970, R\"ussmann generalized this fundamental result to non-conservative twist diffeomorphisms of the annulus \cite{Russmann:1970, Bost:1985, Yoccoz:Bourbaki}. He showed that the persistence of a Diophantine, invariant circle is a phenomenon of co-dimension $1$: in general the invariant curve does not persist but is translated in the normal direction. It is the "theorem of the translated curve" (see below for a precise statement).\\
As in Kolmogorov's theorem \cite{Kolmogorov:1954}, the dynamics on the translated curve can be conjugated to the same initial Diophantine rotation because of the non degeneracy (twist) of the map. Herman gave a proof of the translated curve theorem for diffeomorphisms with rotation number of constant type \cite{Herman:1983}, then generalized R\"ussmann's result in higher dimension to diffeomorphisms of $\T^n\times\R$ ($\T^n = \R^n/2\pi\Z^n$) close enough to the rotation $(\teta,r) \mapsto (\teta + 2\pi\alpha, r)$, $2\pi\alpha$ being a Diophantine vector, without assuming any twist hypothesis but introducing an external parameter in order to tune the frequency on the translated torus, yet breaking the dynamical conjugacy to the Diophantine rotation, see \cite{Yoccoz:Bourbaki}. \\ Up to our knowledge no further generalization in $\T^n\times\R^n$ of R\"ussmann's theorem has been given so far. 

The first purpose of this work is to prove a discrete-time analogue of Moser's 1967 normal form \cite{Moser:1967} of real analytic perturbations of vector fields on $\T^n\times\R^m$ possessing a quasi-periodic Diophantine,  reducible, invariant torus. The normal form will then be used to deduce a "translated torus theorem" under convenient non-degereracy assumptions. As a by-product, R\"ussmann's classic theorem will be a particular case of small dimension. While R\"ussmann and Herman consider smooth or finite differentiable diffeomorphisms, we focus here on the analytic category. Let us state the main results.
\subsection*{A normal form for diffeomorphisms}
Let $\T^n=\R^n/2\pi\Z^n$ be the $n$-dimensional torus. Let $V$ be the space of germs along $\T^n\times\set{0}$ in $\T^n\times\R^m = \set{(\teta,r)}$ of real analytic diffeomorphisms. Fix $\alpha\in\R^n$ and $A\in\Mat_m(\R)$, assuming that $A$ is diagonalizable with eigenvalues $a_1,\ldots,a_m\in\C$ different from $0$. \\Let $U(\alpha,A)$ be the affine subspace of $V$ of diffeomorphisms of the form 
\begin{equation}
P(\teta,r) = (\teta + 2\pi\alpha + O(r), A\cdot r + O(r^2)),
\label{P0}
\end{equation}
 where $O(r^k)$ are terms of order $\geq k$ in $r$ which may depend on $\teta$. For these diffeomorphisms $\text{T}^n_0 = \T^n\times\set{0}$ is an invariant, reducible, $\alpha$-quasi-periodic torus whose normal dynamics at the first order is characterized by $a_1,\ldots,a_m.$ We will collectively refer to $\alpha_1,\ldots,\alpha_n$ and $a_1,\ldots,a_m$ as the \co{characteristic frequencies} or \co{characteristic numbers} of $\normal{T}^n_0$.\\
Let $\operatorname{arg} a = (\operatorname{arg} a_1,\ldots,\operatorname{arg}a_r)\in\R^r$ ($0\leq r < m$) be the vector of the arguments of those $a_i$'s having positive imaginary part, say $a_i = \rho_i e^{i\operatorname{arg}a_i}$, where $\rho>0$, $0< \arg a_i <\pi$, and assume that the characteristic numbers satisfy the following Diophantine condition for some real $\gamma,\tau > 0$ 
\begin{equation}
\abs{2\pi k\cdot\alpha - h\cdot \operatorname{arg}a -2\pi\,l}\geq \frac{\gamma}{(1 + \abs{k})^{\tau}},\quad \forall k\in\mathbb{N}^n\setminus\set{0},\forall (l,h)\in \mathbb{Z}\times\Z^r, \abs{h}\leq 2.
\label{Diophantine Diffeos}
\end{equation}

Let $\G$ be the space of germs of real analytic isomorphisms of $\T^n\times\R^m$ of the form 
\begin{equation}
G(\teta,r) = (\varphi(\teta), R_0(\teta) + R_1(\teta)\cdot r) ,
\label{G}
\end{equation}
where $\varphi$ is a diffeomorphism of the torus fixing the origin and $R_0, R_1$ are functions defined on the torus $\T^n$ with values in $\R^n$ and $\Mat_m(\R)$ respectively.

Eventually, let us define the "translation map"
\begin{equation*}
T_\lambda: \T^n\times\R^m\to\T^n\times\R^m,\quad (\teta,r)\mapsto (\beta + \teta, b + (I + B)\cdot r), 
\end{equation*}
where $\beta\in\R^n$, $b\in\R^m$ and $B\in\Mat_m(\R)$ are such that 
\begin{equation}
(A - I)\cdot b = 0,\quad  [A,B]=0,
\label{condition parameters}
\end{equation}
having denoted with $I$ the identity matrix in $\Mat_m(\R)$.\\
We will refer to translating parameters $\lambda = (\beta, b + B\cdot r)$ as \co{corrections} or \co{counter terms}, and denote with $\Lambda$ the space of such $\lambda'$s $$\Lambda = \set{\lambda = (\beta, b + B\cdot r) : (A - I)\cdot b = 0, [A,B]=0 }.$$

\begingroup
\setcounter{tmp}{\value{thm}}
\setcounter{thm}{0} 
\renewcommand\thethm{\Alph{thm}}

\begin{thm}[Normal form]
\label{theorem Moser diffeo}
If $Q$ is sufficiently close to $P^0\in U(\alpha,A)$, there exists a unique triplet $(G, P, \lambda)\in\G\times U(\alpha,A)\times\Lambda$, close to $(\id, P^0,0)$, such that $$ Q = T_{\lambda}\circ G\circ P \circ G^{-1}.$$
\end{thm}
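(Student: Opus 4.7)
The plan is to view
\[
F:\G\times U(\alpha,A)\times\Lambda\to V,\qquad F(G,P,\lambda)=T_\lambda\circ G\circ P\circ G^{-1}
\]
as an analytic map between Banach scales of real-analytic germs on shrinking complex thickenings of widths $(s,\rho)$ of $\T^n\times\set 0$ in the $(\teta,r)$-directions, observe that $F(\id,P^0,0)=P^0$, and prove that $F$ is a local analytic diffeomorphism around that base point; the unique triplet producing $Q$ will then be $F^{-1}(Q)$. Because a finite-order loss of analyticity is unavoidable in inverting $DF(\id,P^0,0)$, I would use a hard implicit-function theorem of KAM--Newton--Moser type rather than a Banach contraction.

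\textbf{Linearized cohomological equation.} First I would compute
\[
DF(\id,P^0,0)\cdot (g,p,\mu)= g\circ P^0 - (DP^0)\cdot g + p + \mu\circ P^0 ,
\]
and invert it by expanding in Taylor series in $r$ at $r=0$ and in Fourier series in $\teta$. On $\T^n\times\set 0$, elements of $T_{P^0}U(\alpha,A)$ satisfy $p^\teta=O(r)$ and $p^r=O(r^2)$, so the $0$-th order Taylor part of the equation reduces to
\[
g^\teta(\teta+2\pi\alpha,0)-g^\teta(\teta,0)+\beta = q^\teta(\teta,0), \qquad g^r(\teta+2\pi\alpha,0)- A\,g^r(\teta,0)+b = q^r(\teta,0).
\]
For $k\neq 0$ the Fourier symbols $e^{2\pi ik\cdot\alpha}-1$ and the eigenvalues $e^{2\pi ik\cdot\alpha}-a_j$ of $e^{2\pi ik\cdot\alpha}I-A$ are controlled below by \eqref{Diophantine Diffeos} with $h=0$ and $|h|=1$ (with a trivial bound whenever $|a_j|\neq 1$), incurring a loss of the form $\gamma^{-1}(1+|k|)^\tau$. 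The resonant $k=0$ mode of the $\teta$-component is absorbed by $\beta$, and the $\ker(A-I)$-part of the $k=0$ mode of the $r$-component is absorbed by $b$; the constraint $(A-I)b=0$ is exactly what makes this absorption well-defined and unique.

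\textbf{First Taylor order in $r$ and beyond.} Next I would handle the order-$1$-in-$r$ part of the equation, whose leading piece on the $r$-component, for $g_1^r:=\partial_r g^r(\cdot,0)$, reads $g_1^r(\teta+2\pi\alpha)A - A\,g_1^r(\teta) + \dots = q_1^r(\teta)$; its Fourier--spectral symbol on matrix entry $(l,j)$ is $e^{2\pi ik\cdot\alpha}a_j-a_l$, invertible for $k\neq 0$ by \eqref{Diophantine Diffeos} with $|h|\le 2$ (specifically $h=\pm(e_j-e_l)$), while the $k=0$ resonance has range exactly in the commutant of $A$, absorbed by $B$ with $[A,B]=0$ ensuring uniqueness. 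All Taylor coefficients of order $\ge 2$ in $r$ carry no cohomological obstruction: they are simply absorbed into the free part of $p\in T_{P^0}U(\alpha,A)$. Altogether this provides a tame right-inverse of $DF(\id,P^0,0)$, with an estimate schematically of the form $\|(g,p,\mu)\|_{s-\sigma,\,\rho-\delta}\le C\gamma^{-1}\sigma^{-\tau-n}\delta^{-1}\|\hat q\|_{s,\rho}$.

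\textbf{Nonlinear iteration, uniqueness, and the main obstacle.} Finally I would run a quadratic Newton iteration on telescoping analyticity widths $(s_j,\rho_j)\searrow(s_\infty,\rho_\infty)>0$, correcting $(G_j,P_j,\lambda_j)$ at each step by the linearized update computed around the current approximation; the quadratic smallness of the residual beats the polynomial analyticity loss provided $\|Q-P^0\|$ is sufficiently small in terms of $\gamma,\tau,n,m,s,\rho$. Uniqueness will follow from the injectivity of $DF(\id,P^0,0)$ established above, together with the contraction built into the iteration. The hardest step will be algebraic rather than analytic: one must verify that the constraints $(A-I)b=0$ and $[A,B]=0$ are preserved throughout the iteration so that $\lambda_\infty$ genuinely lies in $\Lambda$, and adapt the right-inverse from the base point $P^0$ to a nearby $P_j\in U(\alpha,A)$ at each step, checking that the characteristic numbers controlling the small divisors are stable under this drift (they are, because $P_j$ stays in $U(\alpha,A)$ and the normal $1$-jet is fixed to be $A$).
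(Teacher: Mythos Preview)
Your approach mirrors the paper's: same operator, same three cohomological equations solved by Fourier series under~\eqref{Diophantine Diffeos}, same role for the counter-terms $(\beta,b,B)$, and a Newton-type scheme to close. There is, however, one genuine gap and one minor slip.

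The gap: you invert $DF$ only at $(\id,P^0,0)$ and defer the inversion at a nearby $(G_j,P_j,\lambda_j)$ to ``the hardest step'', but this is precisely the content of the proof, not an afterthought. The paper's device is to pre-compose the linearized equation by $G$ and introduce the pulled-back unknown $\dot G := G'^{-1}\cdot\delta G$; the resulting equation $\dot G\circ P - P'\cdot\dot G + \delta P = (G'^{-1}\circ P)\cdot M^{-1}\cdot(\delta Q\circ G - T_{\delta\lambda}\circ G\circ P)$ has exactly the same cohomological structure as at the identity, with coefficients depending on $P\in U(\alpha,A)$ alone (whose $1$-jet along $r=0$ is fixed to $(\alpha,A)$), so the tame right-inverse holds uniformly for $G$ close to $\id$. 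The only residual $G$-dependence is in recovering $\delta\lambda$ from $\dot\lambda$, handled by a finite-dimensional implicit function argument. The paper then feeds these uniform bounds on $\phi'^{-1}$ and on $\phi''$ into an abstract inverse function theorem (Theorem~\ref{teorema inversa}), rather than running the Newton scheme by hand. The slip: your zeroth-order $\theta$-equation omits the coupling coming from the off-diagonal block $p_1(\theta)=\partial_r P^{0,\theta}|_{r=0}$ of $DP^0$; the correct equation reads $\dot\varphi(\theta+2\pi\alpha)-\dot\varphi(\theta)+p_1\cdot\dot R_0=\dot q_0+\dot\beta$, so one must solve the $r$-equation for $\dot R_0$ first and then substitute into the $\theta$-equation.
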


\endgroup

\comment{
\begin{thm}
\label{theorem Moser diffeo}
If $Q$ is sufficiently close to $P^0\in U(\alpha,A)$, there exists a unique $(G, P, \lambda)\in\G\times U(\alpha,A)\times\Lambda$, close to $(\id, P^0,0)$, such that $$ Q = T_{\lambda}\circ G\circ P \circ G^{-1}.$$
\end{thm} }

In the neighborhood of $(\id,P^0,0)$, the $\G$-orbit of all $P's\in U(\alpha,A)$ has \emph{finite} co-dimension. The proof is based on a relatively general inverse function theorem in analytic class (Theorem \ref{teorema inversa} of the Appendix).\\ The idea of proving the finite co-dimension of a set of conjugacy classes of a diffeomorphism or of a vector field has been successfully exploited by many authors. Arnold at first proved a normal form for diffeomorphisms of $\T^n$ \cite{Arnold:1961}, followed by Moser's normal forms for vector fields \cite{Moser:1966II, Moser:1967, Wagener:2010, Massetti:2015}. Among other authors we recall Calleja-Celletti-deLaLlave work on conformally symplectic systems \cite{Calleja-Celletti-delaLlave:KAM}, Chenciner's study on the bifurcation of elliptic fixed points \cite{Chenciner:1985, Chenciner:1985a, Chenciner:1988}, Herman's twisted conjugacy for Hamiltonians \cite{Fejoz:2004, Fejoz:2010} (a generalization of Arnold's work \cite{Arnold:1961}) or Eliasson-Fayad-Krikorian work around the stability of KAM tori \cite{EFK:2015}.

This technique allows us to study the persistence of an invariant torus in two steps: first, prove a normal form that does not depend on any non-degeneracy hypothesis (but that contains the hard analysis); second, reduce or eliminate the (finite dimensional) corrections by the usual implicit function theorem, using convenient non degeneracy assumptions on the system under study. 

\subsection*{A generalization of R\"ussmann's theorem}
From the normal form of Theorem \ref{theorem Moser diffeo}, we see that when $\lambda = 0$, $Q = G \circ P\circ G^{-1}$: the torus $G(\normal{T}_0^n)$ is invariant for $Q$ and the first order dynamics is given by $P\in U(\alpha,A)$. 
Conversely, whenever $\lambda = (\beta, b)$, the torus is \co{translated} and the $2\pi\alpha$-quasi-periodic tangential dynamics is \co{twisted} by the correction in $\beta$: \[Q(\varphi(\teta), R_0(\teta)) = (\beta + \varphi(\teta + 2\pi\alpha), b + R_0(\teta + 2\pi\alpha)).\]
We will loosely say that the torus $\normal{T}^n_0$ 
\begin{itemize}[leftmargin=*]
\item persists up to twist-translation, when $\lambda = (\beta, b)$
\item persists up to translation, when $\lambda = (0, b)$
\end{itemize} 

We stress the fact that Theorem \ref{theorem Moser diffeo} not only gives the tangential dynamics to the torus, but also the normal one, of which R\"ussmann's original statement is regardless:
\begin{thm*}[R\"ussmann]
\label{Russmann theorem} Let $\alpha\in\R$ be Diophantine and $P^0: \T\times [-r_0,r_0]\to \T\times\R$ be of the form $$P^0 (\teta,r)=(\teta + 2\pi\alpha + t(r) + O(r^2), (1 + A^0) r + O(r^2)),$$ where $A^0\in\R\setminus\set{-1}$,  $t(0)=0$ and $t'(r)>0$.\\  If $Q$ is close enough to $P^0$ there exists a unique analytic curve $\gamma:\T\to \R$, close to $r=0$, an analytic diffeomorphism $\varphi$ of $\T$ close to the identity and $b\in\R$, close to $0$, such that \[Q (\teta, \gamma(\teta)) = (\conj(\teta), b + \gamma(\conj(\teta))).\]
\end{thm*}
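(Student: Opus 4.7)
The plan is to derive R\"ussmann's theorem from Theorem~\ref{theorem Moser diffeo} by the two-step strategy advocated in the introduction: apply the normal form, then eliminate the finite-dimensional residual corrections by the analytic implicit function theorem of the appendix, using the twist $t'(0) > 0$ and the non-degeneracy $A^0 \neq -1$. I specialize Theorem~\ref{theorem Moser diffeo} to $n = m = 1$ and $A := 1 + A^0$; since $A$ is real, no eigenvalue has positive imaginary part, so condition \eqref{Diophantine Diffeos} reduces precisely to the Diophantine hypothesis on $\alpha$. For $Q$ sufficiently close to $P^0$ the theorem then produces a unique $(G, P, \lambda)$ with $G(\theta, r) = (\varphi(\theta), R_0(\theta) + R_1(\theta) r)$, $P \in U(\alpha, A)$, $\lambda = (\beta, b, B) \in \Lambda$, and $Q = T_\lambda \circ G \circ P \circ G^{-1}$. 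In dimension one the constraint $[A, B] = 0$ is automatic and $(A - I) b = A^0 b = 0$ forces $b = 0$ as soon as $A^0 \neq 0$.

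Setting the candidate curve to $\gamma_0 := R_0 \circ \varphi^{-1}$, the identities $G^{-1}(\theta, \gamma_0(\theta)) = (\varphi^{-1}(\theta), 0)$ and $P(\varphi^{-1}(\theta), 0) = (\varphi^{-1}(\theta) + 2\pi\alpha, 0)$ yield
\begin{equation*}
Q(\theta, \gamma_0(\theta)) = \bigl(\beta + \conj(\theta),\ b + (1 + B)\,\gamma_0(\conj(\theta))\bigr).
\end{equation*}
R\"ussmann's identity holds verbatim for $(\gamma_0, \varphi, b)$ precisely when $\beta = 0$ and $B = 0$: the tangential rotation number of the translated curve must equal $\alpha$ (forcing $\beta = 0$), and the vertical image of the generically non-constant $\gamma_0$ must be a pure translate of it (forcing $B = 0$). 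The remaining task is the elimination of these two scalar obstructions, with $b$ playing the role of R\"ussmann's translation.

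For the elimination I would enlarge the setting with two scalar parameters—one that perturbs the effective tangential frequency, one that perturbs the linear normal coefficient—and observe that applying Theorem~\ref{theorem Moser diffeo} along this 2-parameter family gives analytic functions $\beta(c_1, c_2)$, $B(c_1, c_2)$ whose $2 \times 2$ Jacobian at the base point is block-diagonal with one entry controlled by $t'(0)$ (non-zero by the twist) and the other by $A$ itself (non-zero because $A^0 \neq -1$). Both diagonal entries being non-zero, the analytic implicit function theorem (Theorem~\ref{teorema inversa} of the appendix) produces a unique $(c_1^\star, c_2^\star)$ killing $\beta$ and $B$ simultaneously; transferring back, the resulting data are R\"ussmann's $\gamma$, $\varphi$, $b$. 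The hard part is to set up the 2-parameter family so that its action on the normal form is transverse to the subspace $\{(0, \cdot, 0)\} \subset \Lambda$, and to verify the non-degeneracy of the Jacobian—this is precisely where ``twist plus $A^0 \neq -1$'' enter. Uniqueness of $(\gamma, \varphi, b)$ then follows from the uniqueness in Theorem~\ref{theorem Moser diffeo} combined with that in the implicit function theorem.
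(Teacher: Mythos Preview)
Your approach is essentially the paper's: apply Theorem~A in dimension one, then kill the two scalar obstructions $(\beta,B)$ by the classical implicit function theorem using two well-chosen external parameters. The paper carries this out sequentially rather than simultaneously---first it eliminates $B$ by letting the normal coefficient $A$ vary (Theorem~\ref{theorem B=0}, where $\partial B/\partial A|_{A=A^0}=-(I+A^0)^{-1}$ is invertible since $A^0\neq -1$), then it eliminates $\beta$ by the $r$-translation $Q_c(\theta,r):=Q(\theta,c+r)$ (Theorem~\ref{Russmann general}, where $\beta'(0)=p_1=t'(0)\neq 0$ by the twist). Your $2\times 2$ Jacobian argument is the same content packaged at once.

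Two points deserve care. First, the finite-dimensional elimination uses the \emph{classical} implicit function theorem, not Theorem~\ref{teorema inversa}: the latter is the Nash--Moser machinery already consumed inside Theorem~A, and invoking it again for a smooth map $\R^2\to\R^2$ is both unnecessary and a misreading of its hypotheses. Second, your bookkeeping on $b$ is inconsistent. You correctly observe that when $A^0\neq 0$ the constraint $(A-I)b=A^0 b=0$ forces the normal-form counter-term $b$ to vanish, yet you then declare this same $b$ to be R\"ussmann's translation---which is generically nonzero. The resolution, implicit in the paper, is that R\"ussmann's translation is \emph{not} the normal-form $b$ but arises from the $r$-shift parameter: once $Q_c$ is put in normal form with $\beta=B=0$, the translated curve for the original $Q$ sits at $\gamma(\theta)=c+R_0\circ\varphi^{-1}(\theta)$, and the identity $Q(\theta,\gamma(\theta))=(\varphi\circ R_{2\pi\alpha}\circ\varphi^{-1}(\theta),\, b_{\mathrm{R\ddot ussmann}}+\gamma(\varphi\circ R_{2\pi\alpha}\circ\varphi^{-1}(\theta)))$ holds with $b_{\mathrm{R\ddot ussmann}}$ determined by $c$ (to leading order, $(1+A^0)c-c=A^0 c$). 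You should make this transfer explicit rather than conflating the two $b$'s.
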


In the original statement $A^0 = 0$; to consider this case does not add any difficulty to the proof. \\
We will generalize R\"ussmann's theorem on $\T^n\times\R^n$. At the expense of conjugating $T^{-1}_\lambda\circ Q$ to a diffeomorphism $P$ whose invariant torus has different constant normal dynamics $A$, under convenient non-degeneracy conditions we can prove the existence of a twisted-translated or translated $\alpha$-quasi-periodic Diophantine torus by application of the classic implicit function theorem in finite dimension. The following results will be proved in section \ref{Russmann theorem appendix}, where a more functional statement will be given (Theorem \ref{theorem B=0} and \ref{Russmann general}). 

On $\T^n\times\R^n$, let $P\in U(\alpha,A)$, defined in expression \eqref{P0}, be such that $A$ has simple, real, non $0$ eigenvalues $a_1,\ldots,a_n$.  This hypothesis clearly implies that the only frequencies that can cause small divisors are the tangential ones $\alpha_1,\ldots,\alpha_n$, so that we only need to require the standard Diophantine hypothesis on $\alpha$. 

\begingroup
\setcounter{tmp}{\value{thm}}
\setcounter{thm}{1} 
\renewcommand\thethm{\Alph{thm}}

\begin{thm}
If $Q$ is sufficiently close to $P^0\in U(\alpha,A)$, the torus $\normal{T}^n_0$ persists up to twist-translation.
\label{B}
\end{thm}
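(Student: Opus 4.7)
The plan is to reduce Theorem~\ref{B} to a finite-dimensional implicit function theorem applied on top of the normal form. The obstruction to pure twist-translation (i.e.\ to $\lambda = (\beta, b)$ with no linear-in-$r$ part) is the matrix $B$; since $A$ has simple eigenvalues the commutant $C(A) = \set{M\in\Mat_n(\R) : [A,M]=0}$ has dimension $n$, so $B$ lives in an $n$-dimensional space. I shall exploit the fact that Theorem~\ref{theorem Moser diffeo} depends analytically on the matrix parameter: for $\delta A \in C(A)$ small, applying it with $A$ replaced by $A + \delta A$ yields, for $Q$ sufficiently close to $P^0$, a unique analytic family of decompositions $Q = T_\lambda \circ G \circ P \circ G^{-1}$ with $P \in U(\alpha, A+\delta A)$ and $\lambda = (\beta, b + B\cdot r)$. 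Denote the resulting $B$-component by $B(Q,\delta A) \in C(A+\delta A)$. The strategy is to find $\delta A = \delta A(Q)$ with $B(Q,\delta A(Q)) = 0$, which then delivers the desired twist-translated decomposition.

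To invoke the implicit function theorem I need $\partial_{\delta A} B(P^0, 0)$ to be an automorphism of $C(A)$. This follows from a direct algebraic computation: take $Q = P^0 \in U(\alpha, A)$ and test the ansatz $G = \id$, $\beta = 0$, $b = 0$, with $P$ obtained from $P^0$ by relabelling its linear normal part as $(A+\delta A)r - \delta A\cdot r$. Imposing $Q = T_{(0,B\cdot r)} \circ P$ and matching the linear normal parts forces $(I + B)(A + \delta A) = A$, whence
\[
B = -\delta A\,(A + \delta A)^{-1},\qquad \partial_{\delta A} B\big|_{\delta A = 0}(\eta) = -\eta\, A^{-1}.
\]
Since $A$ is invertible and $A^{-1}\in C(A)$, the map $\eta \mapsto -\eta\, A^{-1}$ is an automorphism of $C(A)$. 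By the uniqueness clause of Theorem~\ref{theorem Moser diffeo}, this is indeed $\partial_{\delta A} B(P^0, 0)$, and the standard finite-dimensional implicit function theorem produces the required $\delta A(Q)$, depending analytically on $Q$ near $P^0$.

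Inserting $\delta A(Q)$ into the normal form gives $Q = T_\lambda \circ G \circ P \circ G^{-1}$ with $\lambda = (\beta, b) \in \R^n \times \ker(A+\delta A(Q) - I)$ and $P \in U(\alpha, A + \delta A(Q))$. Since $P$ preserves $\normal{T}^n_0$ with quasi-periodic tangential dynamics $\teta \mapsto \teta + 2\pi\alpha$, one has $Q(G(\teta,0)) = T_\lambda(G(\teta + 2\pi\alpha, 0))$, which is the persistence of $\normal{T}^n_0$ up to twist-translation claimed in Theorem~\ref{B}. The one point that requires care is the uniform analytic dependence of Theorem~\ref{theorem Moser diffeo} on the matrix parameter: the whole small-divisor analysis must remain valid for $A + \delta A$ near $A$. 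This is painless here because the assumption that $A$ has simple \emph{real} eigenvalues makes the vector $\arg a$ empty, so the Diophantine condition \eqref{Diophantine Diffeos} reduces to the standard condition on $\alpha$ and is insensitive to perturbations of $A$. The finite-dimensional implicit function theorem then applies without further work.
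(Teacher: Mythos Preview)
Your proposal is correct and follows essentially the same route as the paper: let the normal matrix vary as a free parameter, compute the derivative of the $B$-component of the normal form at $Q=P^0$ (the paper gets $-(I+A^0)^{-1}$ in its shifted notation, matching your $-\eta A^{-1}$), and apply the finite-dimensional implicit function theorem, noting that real simple eigenvalues make the Diophantine condition independent of $A$. Your parameter space $C(A)$ coincides with the paper's space of diagonal matrices after conjugation, and your observation about the empty $\arg a$ is exactly the content of the paper's remark following the proof; the only minor imprecision is that the paper guarantees $C^\infty$ (Whitney) dependence on the matrix parameter rather than analytic, but $C^1$ suffices for the implicit function step.
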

\endgroup

If, in addition, $Q$ has a torsion property we can prove the following theorem.
\begingroup
\setcounter{tmp}{\value{thm}}
\setcounter{thm}{2} 
\renewcommand\thethm{\Alph{thm}}

\begin{thm} 
Let \[P^0(\teta,r) = (\teta + 2\pi\alpha + p_1(\teta)\cdot r + O(r^2), (I + A^0) \cdot r + O(r^2)),\] be such that \[\operatorname{det}\pa{\int_{\T^n} p_1(\teta)\,d\teta}\neq 0.\]
If $Q$ is sufficiently close to $P^0$, the torus $\normal{T}^n_0$ persists up to translation.
\label{C}
\end{thm}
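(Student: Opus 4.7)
My plan is to use the \emph{elimination of parameters} strategy announced in the introduction: Theorem~B supplies the infinite-dimensional analysis, and the torsion hypothesis turns the remaining obstruction into a finite-dimensional equation, closed by the classical implicit function theorem. First I would apply Theorem~B to $Q$ to obtain, smoothly in $Q$, an isomorphism $G$, a normal form $P\in U(\alpha,I+A^0)$, and counter-terms $\beta\in\R^n$, $b\in\ker A^0$ such that
\[
Q = T_{(\beta,b)}\circ G\circ P\circ G^{-1}.
\]
The task becomes to modify the decomposition so that $\beta=0$, at the cost of enlarging the normal-form family by $n$ extra parameters.

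To introduce these parameters I would let the normal linearization vary. Choose an $n$-dimensional subspace $E\subset\Mat_n(\R)$---say the diagonal matrices in the eigenbasis of $A^0$---and, for each $\eta\in E$ small, note that $Q$ remains close to a reference map $P^0_\eta\in U(\alpha, I+A^0+\eta)$ obtained by adjusting the normal linear part of $P^0$. Reapplying Theorem~B to $Q$ relative to $P^0_\eta$ yields a smooth family $\eta\mapsto(G(\eta),P(\eta),(\beta(\eta),b(\eta)))$ with $P(\eta)\in U(\alpha,I+A^0+\eta)$ and
\[
Q = T_{(\beta(\eta),b(\eta))}\circ G(\eta)\circ P(\eta)\circ G(\eta)^{-1}.
\]
Since the eigenvalues remain real under small perturbation, the Diophantine condition \eqref{Diophantine Diffeos} reduces to the standard one on $\alpha$ and is preserved.

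The heart of the proof is the derivative $\partial\beta/\partial\eta|_{\eta=0}\colon E\to\R^n$. A shift of the normal linear part by $\eta$ forces the conjugation $G$ to carry a first-order $r$-correction; this correction couples back into the tangential direction through the torsion term $p_1(\theta)\cdot r$, producing after averaging over $\T^n$ a leading-order contribution to $\beta$ essentially equal to $\bar p_1\cdot\eta$, with $\bar p_1=\int_{\T^n}p_1(\theta)\,d\theta$ viewed as a map $E\to\R^n$ via the eigenbasis identification. The hypothesis $\det\bar p_1\neq 0$ then renders this derivative an isomorphism, up to an $O(\|Q-P^0\|)$ error absorbed for $Q$ sufficiently close to $P^0$. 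This derivative computation is the main obstacle: one has to trace through the construction of Theorem~A (which already contains the hard analysis) to see precisely how a variation of the normal linearization propagates into the averaged tangential counter-term $\beta$, and verify that the averaged torsion matrix $\bar p_1$ governs the propagation at first order.

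With invertibility in hand, the classical finite-dimensional implicit function theorem furnishes a unique $\eta^*\in E$ close to $0$ with $\beta(\eta^*)=0$. The corresponding decomposition
\[
Q = T_{(0,b(\eta^*))}\circ G(\eta^*)\circ P(\eta^*)\circ G(\eta^*)^{-1}
\]
exhibits $G(\eta^*)(\T^n\times\{0\})$ as an invariant torus of $Q$ translated only by $b(\eta^*)\in\ker(A^0+\eta^*)$, which is the content of Theorem~C.
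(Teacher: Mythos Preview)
Your proposal has a genuine gap: the parameter $\eta$ you introduce has already been consumed by Theorem~B and cannot be reused. Theorem~B does not produce a normal form in a \emph{prescribed} class $U(\alpha,I+A^0+\eta)$; rather, it selects the unique matrix $\bar A$ near $A^0$ for which the $B$-correction vanishes, and then outputs $(G,P,(\beta,b))$ with $P\in U(\alpha,I+\bar A)$. This $\bar A$, and hence the entire output, is determined by $Q$ alone via the uniqueness in Theorem~A. Changing the reference point from $P^0$ to some $P^0_\eta$ merely recenters the germ; for fixed $Q$ the map $\eta\mapsto\beta(\eta)$ is constant and its derivative vanishes. Concretely, take $Q=P^0$: whatever $\eta$ you start from, Theorem~B recovers $\bar A=A^0$, $G=\id$, $P=P^0$, $\beta=0$. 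Your heuristic that a shift of the normal linear part ``forces $G$ to carry a first-order $r$-correction'' is exactly backwards---such a shift is absorbed by the $B$-counter-term in Theorem~A, which Theorem~B then undoes by re-selecting $\bar A$, leaving $G$ and $\beta$ untouched.

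The paper obtains $n$ genuinely independent parameters by translating the action variable instead: set $Q_c(\theta,r)=Q(\theta,c+r)$ for $c\in\R^n$ and apply Theorem~B to each $Q_c$. Now the \emph{input} varies with $c$, so $\beta(c)$ does too. After a preliminary conjugation reducing $p_1(\theta)$ to its average $\bar p_1$, one reads off directly from the Taylor expansion of $P^0_c$ that $\beta(c)=\bar p_1\cdot c+O(c^2)$, whence $\beta'(0)=\bar p_1$ is invertible by the torsion hypothesis and the classical implicit function theorem yields the unique $c$ with $\beta(c)=0$. The dimension count makes the issue transparent: Theorem~A produces counter-terms $(\beta,B)$ of total dimension $2n$ (with $B$ in the $n$-dimensional commutant of $I+A^0$), so $2n$ free parameters are required---$n$ from varying $A$ (spent in Theorem~B to kill $B$) and $n$ from the action shift $c$ (spent here to kill $\beta$). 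Your scheme attempts to spend the same $n$ parameters twice.
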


\endgroup

The paper is organized as follows: in sections \ref{section normal form operator}-\ref{section difference equations} we introduce the normal form operator, define conjugacy spaces and present the difference equations that will be solved to linearize the dynamics on the perturbed torus; in section \ref{inversion of phi} we will prove Theorem \ref{theorem Moser diffeo} while in section \ref{Russmann theorem appendix} we will prove Theorems \ref{B} and \ref{C}.

\section{The normal form operator}
\label{section normal form operator}
We will show that the operator 
\begin{equation*}
\phi: \G\times U(\alpha,A)\times \Lambda \to V,\quad (G, P, \lambda)\mapsto  T_{\lambda}\circ G\circ P \circ G^{-1}
\end{equation*}

is a local diffeomorphism (in the sense of scales of Banach spaces) in a neighborhood of $(\id, P^0, 0)$.  Note that $\phi$ is formally defined on the whole space but $\phi (G, P,\lambda)$ is analytic in the neighborhood of $\normal{T}^n_0$ only if $G$ is close enough to the identity with respect to the width of analyticity of $P$. See section \ref{section operator}.

 Although the difficulty to overcome in the proof is rather standard for conjugacy problems of this kind (proving the fast convergence of a Newton-like scheme), the procedure relies on a relatively general inverse function theorem (Theorem \ref{teorema inversa} of section \ref{section implicit}), following a strategy alternative to Zehnder's in \cite{Zehnder:1975}. Both Zehnder's approach and ours rely on the fact that the fast convergence of the Newton' scheme is somewhat independent of the internal structure of the variables.  

\subsection{Complex extensions}
Let us extend the tori \[\T^n=\R^n/{2\pi\Z^n}\qquad \text{and}\qquad \normal{T}^n_0=\T^n\times\set{0}\subset\T^n\times\R^m,\] as \[\T^n_{\C}= \C^n/{2\pi\Z^n}\qquad \text{and} \qquad \text{T}^n_\C = \T^n_{\C}\times\C^m\] respectively, and consider the corresponding $s$-neighborhoods defined using $\ell^\infty$-balls (in the real normal bundle of the torus): \[\T^n_s=\set{\teta\in\T^n_{\C}:\, \max_{1\leq j \leq n}\abs{\Im\teta_j}\leq s}\quad\text{and}\quad \text{T}^n_s=\set{\pa{\teta,r}\in\text{T}^n_{\C}:\, \abs{(\Im\teta,r)}\leq s},\]
where $\abs{(\Im\teta,r)}:= \max_{1\leq j\leq n}\max(\abs{\Im\teta_j},\abs{r_j})$. \\

Let now $f: \normal{T}^n_s\to \C$ be real holomorphic on the interior of $\normal{T}^n_s$, continuous on $\normal{T}^n_s$, and consider its Fourier expansion $f(\teta,r)=\sum_{k\in\Z^n}\,f_k(r)\,e^{i\,k\cdot\teta}$, noting $k\cdot\teta = k_1\teta_1 +\ldots + k_n\teta_n$. In this context we introduce the so called "weighted norm": \[\abs{f}_s := \sum_{k\in\Z^n}\abs{f_k}\, e^{\abs{k}s},\quad \abs{k}=\abs{k_1}+\ldots +\abs{k_n},\] $\abs{f_k}=\sup_{\abs{r}<s} \abs{f_k(r)}$. Whenever $f : \normal{T}^n_s\to\C^{n}$, $\abs{f}_s = \max_{1\leq j\leq n}(\abs{f_j}_s)$, $f_j$ being the $j$-th component of $f(\teta,r)$. \\ It is a trivial fact that the classical sup-norm is bounded from above by the weighted norm: \[\sup_{z\in{\normal{T}^n_s}}\abs{f(z)}\leq\abs{f}_s\] and that $\abs{f}_s<+\infty$ whenever $f$ is analytic on its domain, which necessarily contains some $\normal{T}^n_{s'}$ with $s'>s$.\footnote{The inequality shows the well known fact that if $f$ is real analytic on $\T^n$, it admits a holomorphic bounded extension: its Fourier's coefficients decay esponentially and there exists $s>0$ such that $\abs{f}_s<\infty$}  In addition, the following useful inequalities hold if $f,g$ are analytic on $\normal{T}^n_{s'}$ \[\abs{f}_s\leq\abs{f}_{s'}\,\text{ for }\, 0<s<s',\] and \[\abs{fg}_{s'}\leq \abs{f}_{s'}\abs{g}_{s'}.\] For more details about the weighted norm, see for example \cite{Meyer:1975}.\\
In general for complex extensions $U_s$ and $V_{s'}$, we will denote by $\mathcal{A}(U_s,V_{s'})$ the set of holomorphic functions from $U_s$ to $V_{s'}$ and $\mathcal{A}(U_s)$, endowed with the $s$-weighted norm, the Banach space $\mathcal{A}(U_s,\C)$.

Eventually, let $E$ and $F$ be two Banach spaces,

\begin{itemize}[leftmargin=*]
\item We indicate contractions with a dot "$\,\cdot\,$", with the convention that if $l_1,\ldots, l_{k+p}\in E^\ast$ and $x_1,\ldots, x_p\in E$ 
\begin{equation*}
(l_1\tensor\ldots\tensor l_{k+p})\cdot (x_1\tensor\ldots\tensor x_p) = l_1\tensor\ldots \tensor l_k \gen{l_{k+1},x_1}\ldots \gen{l_{k+p},x_p}.
\end{equation*}
In particular, if $l\in E^\ast$, we simply write $l^n= l\tensor\ldots\tensor l$.\\
\item If $f$ is a differentiable map between two open sets of $E$ and $F$, $f'(x)$ is considered as a linear map belonging to $F\tensor E^{\ast}$,  $f'(x): \zeta\mapsto f'(x)\cdot\zeta$; the corresponding norm will be the standard operator norm \[\abs{f'(x)} = \sup_{\zeta\in E, \abs{\zeta}_E=1}\abs{f'(x)\cdot\zeta}_F.\]
\end{itemize}

\subsection{Spaces of conjugacies}
\begin{itemize}[leftmargin=*]
\item We consider the set $\G^{\sigma}_s$ of germs of holomorphic diffeomorphisms on $\normal{T}^n_s$ such that $$\abs{\varphi - \id}_s \leq \sigma$$ and $$\abs{R_0 + (R_1 - \id)\cdot r}_s\leq\sigma,$$
and endow the tangent space at the identity $T_{\id}\G^{\sigma}_s$ with the norm $$\abs{\dot G}_s = \max_{1\leq j \leq n+m} \pa{\abs{\dot{G}_j}_s}.$$
\begin{figure}[h!]
\begin{picture}(0,0)%
\includegraphics{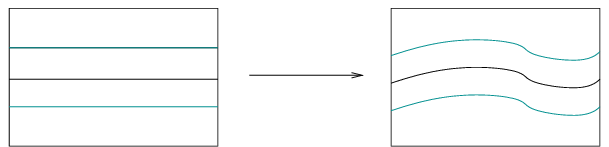}%
\end{picture}%
\setlength{\unitlength}{1657sp}%
\begingroup\makeatletter\ifx\SetFigFont\undefined%
\gdef\SetFigFont#1#2#3#4#5{%
  \reset@font\fontsize{#1}{#2pt}%
  \fontfamily{#3}\fontseries{#4}\fontshape{#5}%
  \selectfont}%
\fi\endgroup%
\begin{picture}(7905,1881)(1561,-5473)
\put(5581,-4426){\makebox(0,0)[lb]{\smash{{\SetFigFont{6}{7.2}{\familydefault}{\mddefault}{\updefault}{\color[rgb]{0,0,0}$G$}%
}}}}
\put(1846,-3751){\makebox(0,0)[lb]{\smash{{\SetFigFont{6}{7.2}{\familydefault}{\mddefault}{\updefault}{\color[rgb]{0,0,0}$\text{T}_{s+\sigma}$}%
}}}}
\put(1576,-4246){\makebox(0,0)[lb]{\smash{{\SetFigFont{6}{7.2}{\familydefault}{\mddefault}{\updefault}{\color[rgb]{0,.56,.56}$\text{T}_s$}%
}}}}
\put(1711,-4606){\makebox(0,0)[lb]{\smash{{\SetFigFont{6}{7.2}{\familydefault}{\mddefault}{\updefault}{\color[rgb]{0,0,0}$\text{T}_{0}$}%
}}}}
\put(9451,-4201){\makebox(0,0)[lb]{\smash{{\SetFigFont{6}{7.2}{\familydefault}{\mddefault}{\updefault}{\color[rgb]{0,.56,.56}$G(\text{T}_s)$}%
}}}}
\put(9406,-4651){\makebox(0,0)[lb]{\smash{{\SetFigFont{5}{6.0}{\familydefault}{\mddefault}{\updefault}{\color[rgb]{0,0,0}$G(\text{T}_0)$}%
}}}}
\end{picture}%

\caption{Deformed complex domain}
\end{figure}

\item Let $V_s$ be the subspace of $\mathcal{A}(\text{T}^n_s, \T^n_{\C}\times\C^m)$ of diffeomorphisms $$ Q : (\teta,r)\mapsto (f(\teta,r), g(\teta,r)),$$ where $f\in\mathcal{A}(\text{T}^n_s, \C^n), g\in\mathcal{A}(\text{T}^n_s, \C^m)$, endowed with the norm \[\abs{Q}_s = \max \pa{\abs{f}_s, \abs{g}_s}.\]
\item Let $U_s(\alpha,A)$ be the subspace of $V_s$ of those diffeomorphisms $P$ of the form $$P(\teta,r)= (\teta + 2\pi\alpha + O(r), A\cdot r + O(r^2)). $$ We will indicate with $p_i$ and $P_i$ the coefficients of the order-$i$ term in $r$, in the $\teta$ and $r$-directions respectively. 
\item If $G\in \G_{s}^\sigma$ and $P$ is a diffeomorphism over $G(\normal{T}^n_s)$ we define the following deformed norm $$ \abs{P}_{G,s}:= \abs{P\circ G}_s, $$ depending on $G$; this in order not to shrink artificially the domains of analyticity. The problem, in a smooth context, may be solved without changing the domain, by using plateau functions.
\end{itemize}
\subsection{The normal form operator}
\label{section operator}
By Theorem \ref{theorem well def} and Corollary \ref{cor well def} the following operator
\begin{equation}
\amat{llcl}{
\phi:\, & \G_{s+\sigma}^{\sigma/n}\times U_{s+\sigma}(\alpha,A)\times\Lambda &\to & V_s \\ & (G,P,\lambda) &\mapsto & T_{\lambda}\circ G\circ P \circ G^{-1} }
\label{Moser operator diffeo}
\end{equation}
is now well defined. It would be more appropriate to write $\phi_{s,\sigma}$ but, since these operators commute with source and target spaces, we will refer to them simply as $\phi$. We will always assume that $0<s<s+\sigma<1$ and $\sigma<s$.

\section{Difference equations}
\label{section difference equations}
We will apply the following Lemmata to linearize the tangent and the normal dynamics of the torus (see section \ref{inversion of phi}).\\
Let $\alpha\in\R^n$ and $\arg a = (\operatorname{arg} a_1,\ldots,\operatorname{arg}a_r)\in\R^r$ ($0\leq r < m$), the vector of arguments of complex eigenvalues of $A\in\Mat_m(\R)$ with positive imaginary part, satisfy the following conditions, which all follow from \eqref{Diophantine Diffeos}.
\begin{align}
\label{Dio 1}
&\abs{k\cdot\alpha - l}\geq \frac{\gamma}{\abs{k}^\tau},\qquad \forall k\in\mathbb{N}^n\setminus\set{0},\forall l\in\Z\\
\label{Dio 2}
&\abs{2\pi k\cdot\alpha - \operatorname{arg} a_j - 2\pi\,l}\geq \frac{\gamma}{\pa{1 + \abs{k}}^\tau},\qquad \forall k\in\mathbb{N}^n,\forall l\in\Z, j=1,\dots,r,\\
\label{Dio 3}
&\abs{2\pi k\cdot\alpha + h\cdot \arg a - 2\pi\,l}\geq \frac{\gamma}{\pa{1 + \abs{k}}^\tau},\qquad \forall (k,h)\in\mathbb{N}^n\times\Z^r\setminus\set{0},\forall l\in\Z, \quad\abs{h}=2.
\end{align}

The following fundamental Lemma is the heart of the proof of Theorem \ref{theorem Moser diffeo} and, more generally, of many stability results related to Diphantine rotations on the torus.
\begin{lemma}[Straightening the tangent dynamics]
\label{lemma cohomological circle} Let $\alpha\in\R$ be Diophantine in the sense of \eqref{Dio 1}. For any $g\in\mathcal{A}(\T_{s+\sigma})$, there exists a unique $f\in\mathcal{A}(\T_{s})$ of zero average and a unique $\mu \in\R$ such that 
\begin{equation*}
\mu +  f(\teta + 2\pi\alpha) - f(\teta) = g(\teta),\quad \mu = \media{g},
\end{equation*}
satisfying
\[\abs{f}_s\leq \frac{C}{\gamma\sigma^{\tau + 1}}\abs{g}_{s+\sigma},\] $C$ being a constant depending only on $\tau$.\\
\emph{Complement}. For any $a,b\in\R^+\setminus\set{0}$, $a\neq b$, and any $g\in\mathcal{A}(\T_{s+\sigma})$ there exists a unique $f\in\mathcal{A}(\T_{s})$ such that
\begin{equation*}
 a f(\teta + 2\pi\alpha) - b f(\teta) = g(\teta),
\end{equation*}
satisfying the same kind of estimate.
\end{lemma}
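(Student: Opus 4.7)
The plan is to diagonalize the operator $f\mapsto f(\cdot+2\pi\alpha)-f(\cdot)$ on the Fourier side, where it acts by multiplication by $e^{2\pi i k\alpha}-1$, and then use the Diophantine lower bound on these multipliers together with the exponential decay of Fourier coefficients on $\T_{s+\sigma}$ to control the $s$-weighted norm of the inverse. The uniqueness is immediate from this diagonalization, and the existence reduces entirely to obtaining a convergent bound. The main (and standard) obstacle is to pay for the growth $|k|^{\tau}$ introduced by the small divisors against the exponential gain $e^{-|k|\sigma}$ obtained from shrinking the analyticity strip from $s+\sigma$ to $s$.

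More precisely, write $g(\teta)=\sum_{k\in\Z}g_k e^{ik\teta}$ with $|g_k|\leq |g|_{s+\sigma}\,e^{-|k|(s+\sigma)}$. Comparing Fourier coefficients in $\mu + f(\teta+2\pi\alpha)-f(\teta)=g(\teta)$ forces $\mu=g_0=\media{g}$, $f_0=0$ (to enforce the zero-average normalization), and for $k\neq 0$
\[
 f_k \;=\; \frac{g_k}{e^{2\pi i k\alpha}-1}.
\]
Using $|e^{2\pi i k\alpha}-1|\geq \tfrac{4}{\pi}\,\mathrm{dist}(k\alpha,\Z)$ together with the Diophantine hypothesis \eqref{Dio 1} yields $|e^{2\pi i k\alpha}-1|^{-1}\leq C\,|k|^{\tau}/\gamma$. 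Summing,
\[
 |f|_s \;=\; \sum_{k\neq 0}|f_k|\,e^{|k|s} \;\leq\; \frac{C}{\gamma}\,|g|_{s+\sigma}\sum_{k\neq 0}|k|^{\tau}e^{-|k|\sigma},
\]
and the elementary estimate $\sum_{k\neq 0}|k|^{\tau}e^{-|k|\sigma}\leq C(\tau)\,\sigma^{-(\tau+1)}$ (obtained by comparison with $\int_0^{\infty} x^{\tau}e^{-x\sigma}\,dx=\Gamma(\tau+1)\sigma^{-(\tau+1)}$) delivers the announced bound $|f|_s\leq C\,\gamma^{-1}\sigma^{-(\tau+1)}|g|_{s+\sigma}$. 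Real-analyticity of $f$ (and reality when $g$ is real) follows from the usual conjugation symmetry of its Fourier coefficients.

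For the complement, the same Fourier diagonalization gives
\[
 f_k \;=\; \frac{g_k}{a\,e^{2\pi i k\alpha}-b}, \qquad k\in\Z.
\]
Here there are no small divisors to fight: since $a,b>0$, one has
\[
 |a\,e^{2\pi i k\alpha}-b|^{2} \;=\; (a-b)^{2} + 2ab\bigl(1-\cos(2\pi k\alpha)\bigr) \;\geq\; (a-b)^{2},
\]
so $|f_k|\leq |g_k|/|a-b|$ uniformly in $k$ and hence $|f|_s\leq |g|_{s+\sigma}/|a-b|$; up to the harmless replacement of the constant, this fits the same functional form as above. Uniqueness again comes from the fact that each Fourier mode is determined independently, no mode being in the kernel.
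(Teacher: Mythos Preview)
Your proof is correct and follows essentially the same route as the paper: Fourier diagonalization of the difference operator, the Diophantine lower bound on the multipliers $e^{2\pi ik\alpha}-1$, and the standard estimate $\sum_{k\neq 0}|k|^{\tau}e^{-|k|\sigma}\lesssim\sigma^{-(\tau+1)}$ to absorb the polynomial loss into the shrinking of the analyticity strip. For the complement you make explicit the bound $|a e^{2\pi ik\alpha}-b|\geq |a-b|$ that the paper only alludes to (in the remark following the proof), confirming that no small divisors arise when $a\neq b$.
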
 
\begin{proof}
Developing in Fourier series yields
\begin{equation*}
 \sum_k ( e^{i2\pi\,k\,\alpha} - 1)f_k e^{ik\teta} = \sum_k g_k e^{i\,k\teta};
\end{equation*}
 letting $\mu = g_0$ we formally have
\begin{equation*}
f(\teta)= \sum_{k\neq 0} \frac{g_k}{ e^{i2\pi\,k\,\alpha} - 1}e^{i\,k\teta}.
\end{equation*}
To prove the estimate, remark that for any $a,b\in\R^+$
\begin{align*}
\abs{a\, e^{i2\pi\,k\,\alpha} - b}^2 &= (a - b)^2 \cos^2{\frac{2\pi k\alpha}{2}} + (a + b)^2\sin^2{\frac{2\pi k\alpha}{2}}\\&\geq (a + b)^2 \sin^2{\frac{2\pi k\alpha}{2}} = (a + b)^2\sin^2\frac{2\pi(k\alpha - l)}{2},
\end{align*}
with $l\in\Z$. Choosing $l\in\Z$ such that $-\frac{\pi}{2}\leq \frac{2\pi(k\alpha - l)}{2}\leq \frac{\pi}{2}$, we get
\begin{equation*}
\abs{e^{i2\pi\,k\,\alpha} - 1}\geq 8\pi^{-2}\abs{k\alpha - l}\geq 8\pi^{-2}\frac{\gamma}{\abs{k}^\tau}, 
\end{equation*}
by the classical inequality $\abs{\sin x}\geq \frac{2}{\pi}\abs{x}$, whenever $ -\frac{\pi}{2}\leq x\leq \frac{\pi}{2},$ and condition \eqref{Dio 1}. To get the claimed estimate is now a standard computation. We address the reader interested to optimal estimates (with $\sigma ^\tau$ instead of $\sigma^{\tau + 1}$) to \cite{Russmann:1976}. The proof of the complement is straightforward.
\end{proof}

\begin{rmk} Note that the homological equation of the complement does not involve small divisors and it can readily be solved, without losing analyticity, just bounding the denominator form above with $\abs{a - b}$. Small divisors can occur only in the case $a = b$ or $\abs{a}=\abs{b}=1$.
\end{rmk}
 Let now $\alpha\in\R^n$ and $A\in\Mat_m (\R)$ be such that $a_i\neq 1, i=1,\ldots, m$, and consider the following operator
\begin{equation*}
L_{1,A} : \mathcal{A}(\T^n_{s+\sigma},\C^m) \to \mathcal {A}(\T^n_s,\C^m),\quad f\mapsto f(\teta + 2\pi\alpha) - A\cdot f(\teta).
\label{cohomological2}
\end{equation*}
\begin{lemma}[Relocating the torus]
Let $\alpha\in\R^n$ and $A\in\Mat_m(\R)$ be a diagonalizable matrix, with eigenvalues distinct from $1$, satisfying the Diophantine condition \eqref{Dio 2}. For every $g\in\mathcal{A}(\T^n_{s+\sigma},\C^m)$, there exists a unique preimage $f\in\mathcal{A}(\T^n_{s},\C^m)$  by $L_{1,A}$. Moreover the following estimate holds
\[\abs{f}_{s}\leq \frac{C_2}{\gamma}\frac{1}{ \sigma^{n+\tau}}\abs{g}_{s+\sigma}, \] $C_2$ being a constant depending only on the dimension $n$ and the exponent $\tau$.
\label{normal lemma}
\end{lemma}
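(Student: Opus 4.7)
The plan is the standard one for cohomological equations of this type: diagonalize $A$ so that the system decouples into $m$ scalar difference equations, solve each in Fourier series, bound the small denominators using the Diophantine condition \eqref{Dio 2}, and sum with the weighted norm, losing a factor $\sigma^{-(n+\tau)}$.

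\textbf{Step 1 (Diagonalization).} Since $A$ is diagonalizable over $\C$, write $A = C D C^{-1}$ with $D = \operatorname{diag}(a_1,\ldots,a_m)$ and $C\in\GL_m(\C)$. Setting $\tilde f = C^{-1} f$, $\tilde g = C^{-1} g$, the operator $L_{1,A}$ becomes, componentwise,
\begin{equation*}
\tilde f_j(\teta + 2\pi\alpha) - a_j \tilde f_j(\teta) = \tilde g_j(\teta), \qquad j=1,\ldots,m.
\end{equation*}
The weighted norms of $f,g$ and $\tilde f, \tilde g$ differ by a multiplicative constant depending only on the conditioning of $C$, which I absorb into $C_2$.

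\textbf{Step 2 (Fourier inversion and small denominators).} Expanding $\tilde f_j = \sum_k \tilde f_{j,k}\, e^{ik\cdot\teta}$ and similarly for $\tilde g_j$, the scalar equation forces
\begin{equation*}
\tilde f_{j,k} = \frac{\tilde g_{j,k}}{e^{i 2\pi k\cdot\alpha} - a_j},
\end{equation*}
which determines $\tilde f_j$ uniquely. Writing $a_j = \rho_j e^{i\arg a_j}$ one computes
\begin{equation*}
\left| e^{i 2\pi k\cdot\alpha} - a_j \right|^2 = (1-\rho_j)^2 + 4\rho_j \sin^2\!\left(\pi k\cdot\alpha - \tfrac{1}{2}\arg a_j\right).
\end{equation*}
For real eigenvalues this is bounded below by $|1-\rho_j|>0$ (no small divisors, as in the complement of Lemma \ref{lemma cohomological circle}). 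For the complex eigenvalues with positive imaginary part, apply $|\sin x|\geq (2/\pi)\min_{\ell\in\Z}|x-\pi\ell|$ together with \eqref{Dio 2}, obtaining
\begin{equation*}
\left| e^{i 2\pi k\cdot\alpha} - a_j \right| \geq \frac{c\,\sqrt{\rho_j}\,\gamma}{(1+|k|)^\tau}
\end{equation*}
for a numerical constant $c$; the conjugate eigenvalues (which occur because $A$ is real) are controlled by \eqref{Dio 2} applied to $(-k,-l)$.

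\textbf{Step 3 (Estimate in the weighted norm).} Summing over $k$,
\begin{equation*}
|\tilde f_j|_s \leq \frac{C}{\gamma} \sum_{k\in\Z^n} (1+|k|)^\tau\, |\tilde g_{j,k}|\, e^{|k|s}.
\end{equation*}
Splitting $e^{|k|s} = e^{|k|(s+\sigma)} e^{-|k|\sigma}$ and using that, on each sphere $|k|=p$, there are $O(p^{n-1})$ lattice points, one has the standard bound
\begin{equation*}
\sup_{k}\; (1+|k|)^\tau e^{-|k|\sigma} \cdot \#\{|k|=p\} \ \Rightarrow\ \sum_k (1+|k|)^\tau e^{-|k|\sigma} \leq \frac{C_{n,\tau}}{\sigma^{n+\tau}},
\end{equation*}
so that $|\tilde f_j|_s \leq \tfrac{C_2}{\gamma\,\sigma^{n+\tau}}|\tilde g_j|_{s+\sigma}$. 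Reverting to $f,g$ through $C$ gives the claim, and uniqueness is already built into the Fourier resolution.

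\textbf{Main obstacle.} The only delicate point is the uniform lower bound in Step 2; once \eqref{Dio 2} is available this is essentially a repetition of the argument in Lemma \ref{lemma cohomological circle}, with the added twist that $a_j$ need not have modulus $1$, so the quantity $(1-\rho_j)^2$ often provides the entire estimate for free. The remainder of the proof is purely quantitative bookkeeping with the weighted norm.
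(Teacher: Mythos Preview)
Your proof is correct and follows essentially the same route as the paper: diagonalize $A$, solve each scalar equation in Fourier, bound $|e^{i2\pi k\cdot\alpha}-a_j|$ from below (trivially when $\rho_j\neq 1$, via \eqref{Dio 2} otherwise), and sum with the weighted norm. The paper is terser---it factors $e^{i2\pi k\cdot\alpha}-a_j=e^{i\arg a_j}\bigl(e^{i(2\pi k\cdot\alpha-\arg a_j)}-\rho_j\bigr)$ and then simply invokes the calculation of Lemma~\ref{lemma cohomological circle}---while you spell out the $(1-\rho_j)^2+4\rho_j\sin^2(\cdot)$ identity and the $\sum_k(1+|k|)^\tau e^{-|k|\sigma}\lesssim\sigma^{-(n+\tau)}$ estimate explicitly; but the content is the same. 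One small caveat, shared with the paper: absorbing the conditioning of the diagonalizing matrix $C$ (and the factors $|1-\rho_j|^{-1}$, $\rho_j^{-1/2}$) into $C_2$ makes the constant depend on $A$, which is in mild tension with the ``only on $n$ and $\tau$'' clause of the statement.
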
 
\begin{proof}
Let us first suppose that $A$ is diagonal. \\ Expanding both sides of $L_{1,A}f = g $ we see that the Fourier coefficient of the $j$-th component of $f$ is given by \[f^j_k= \frac{g^j_k}{e^{i2\pi\,k\,\alpha} - a_j},\] and the proof is straightforward from Lemma \ref{lemma cohomological circle}, once we write those negative $a_j$ as $a_j=\abs{a_j}e^{i\pi}.$

When $A$ is diagonalizable, let $P\in\GL_m(\C)$ be the transition matrix such that $PAP^{-1}$ is diagonal. Considering $f(\teta + 2\pi\alpha) - A\cdot f(\teta)$, and left multiplying both sides by $P$, we get \[\tilde{f}(\teta + 2\pi\alpha) + PAP^{-1}\tilde{f}(\teta)=\tilde{g},\] where we have set $\tilde{g}=Pg$ and $\tilde{f}=Pf$. This equation has a unique solution with the wanted estimates following Lemma \ref{lemma cohomological circle}, once we take care for those complex $a_j = \rho_j e^{i\operatorname{arg}a_j}, j=1,\ldots, r,$ and write the denominator as $$e^{i2\pi\,k\,\alpha} - a_j = e^{i\operatorname{arg}a_j}\pa{e^{i(2\pi\,k\,\alpha - \operatorname{arg}a_j)} - \rho_{j}}.$$
Taking into account Diophantine conditions \eqref{Dio 1} and \eqref{Dio 2}, we get the wanted thesis. We just need to put $f= P^{-1}\tilde{f}$. 

\end{proof}

Eventually, consider a holomorphic function $F$ on $\T^n_{s+\sigma}$ with values in $\Mat_m(\C)$ and define the operator 
\begin{equation*}
\amat{llcl}{
L_{2,A}:\, & \A(\T^n_{s+\sigma},\Mat_m(\C)) &\to &\A(\T^n_s,\Mat_m(\C))\\
& F &\mapsto & F(\teta + 2\pi\alpha)\cdot A - A\cdot F(\teta)}.
\label{cohomological3}
\end{equation*}

\begin{lemma}[Straighten the first order dynamics] Let $\alpha\in\R^n$ and $A\in\Mat_m(\R)$ be a diagonalizable matrix, with eigenvalues distinct from $1$, satisfying  the Diophantine conditions \eqref{Dio 1} and \eqref{Dio 3} respectively. For every $G\in\A(\T^n_{s+\sigma},\Mat_m(\C))$, such that $\int_{\T^n} G^i_i \, \frac{d\teta}{(2\pi)^n}= 0$, there exists a unique $F\in\A(\T^n_{s},\Mat_m(\C))$, having zero average diagonal elements, such that the matrix equation \[F(\teta + 2\pi\alpha)\cdot A - A\cdot F(\teta) = G(\teta)\] is satisfied; moreover the following estimate holds
\[\abs{F}_s \leq \frac{C_3}{\gamma}\frac{1}{\sigma^{n+\tau}}\abs{G}_{s+\sigma},\]
$C_3$ being a constant depending only on the dimension $n$ and the exponent $\tau$. 
\label{matrix lemma}
\end{lemma}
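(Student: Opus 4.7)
The plan is to mimic the strategy of Lemma \ref{normal lemma}, namely reduce to diagonal $A$ by conjugation and then split the matrix equation into $m^2$ scalar equations of the same form solved in Lemma \ref{lemma cohomological circle} (and its complement).

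First I would diagonalize: let $P\in\GL_m(\C)$ be a transition matrix with $\Lambda := PAP^{-1}$ diagonal, and set $\tilde F = P F P^{-1}$, $\tilde G = P G P^{-1}$. Left- and right-multiplying the original equation by $P$ and $P^{-1}$ yields
\[
\tilde F(\teta + 2\pi\alpha)\cdot \Lambda - \Lambda\cdot \tilde F(\teta) = \tilde G(\teta),
\]
so without loss of generality I work with $A=\operatorname{diag}(a_1,\ldots,a_m)$. The matrix equation then decouples entirely into the scalar equations
\[
a_j F^i_j(\teta + 2\pi\alpha) - a_i F^i_j(\teta) = G^i_j(\teta), \qquad 1\leq i,j\leq m.
\]

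For the diagonal entries $i=j$, after dividing by $a_i\neq 0$ the equation is $F^i_i(\teta + 2\pi\alpha) - F^i_i(\teta) = G^i_i(\teta)/a_i$, which by Lemma \ref{lemma cohomological circle} admits a unique zero-average solution precisely when $\int G^i_i = 0$, with the estimate $|F^i_i|_s \leq (C/\gamma\sigma^{\tau+1}) |G^i_i|_{s+\sigma}/|a_i|$. For the off-diagonal entries $i\neq j$, I would expand both sides in Fourier series and obtain $F^{i,j}_k = G^{i,j}_k/(a_j e^{i2\pi k\cdot\alpha} - a_i)$. The denominators are bounded below by factoring out the appropriate phase: writing $a_i=\rho_i e^{i\arg a_i}$ and $a_j=\rho_j e^{i\arg a_j}$ (with $\arg = \pi$ for negative reals and $\arg = 0$ for positive reals, so that any real combination appearing below reduces to the earlier cases), one has
\[
|a_j e^{i2\pi k\cdot\alpha} - a_i| = \rho_j\,\bigl|e^{i(2\pi k\cdot\alpha + \arg a_j - \arg a_i)} - \rho_i/\rho_j\bigr|,
\]
and the small divisor is controlled either by $|\rho_i-\rho_j|$ when $\rho_i\neq\rho_j$ (the complement of Lemma \ref{lemma cohomological circle} applies without loss of analyticity), or, when $\rho_i=\rho_j$, by the inequality $|e^{ix}-1|\geq (8/\pi^2)|x-2\pi l|$ combined with the Diophantine condition: $\arg a_j - \arg a_i$ has the form $h\cdot\arg a$ with $h\in\Z^r$ and $|h|\leq 2$, so \eqref{Dio 1}, \eqref{Dio 2} or \eqref{Dio 3} (according to whether neither, one, or both of $a_i,a_j$ are complex with positive imaginary part) yields $|a_j e^{i2\pi k\cdot \alpha} - a_i|\geq c\gamma/(1+|k|)^\tau$.

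The hard part is purely bookkeeping: one must check all pairings of real/complex eigenvalues to confirm that the small divisor always falls under one of the three Diophantine conditions in the hypothesis, with $|h|\leq 2$. Once the pointwise bounds on the Fourier coefficients $F^{i,j}_k$ are in place, a standard Cauchy/geometric-series computation (entirely analogous to the one at the end of Lemma \ref{normal lemma}) summing $|F^{i,j}_k|e^{|k|s}$ against $|G^{i,j}|_{s+\sigma}e^{-|k|\sigma}$ produces the loss factor $\sigma^{-(n+\tau)}$, giving the estimate $|F|_s\leq (C_3/\gamma)\sigma^{-(n+\tau)}|G|_{s+\sigma}$. Uniqueness is immediate since the homogeneous equation forces all Fourier coefficients of $\tilde F$ to vanish, thanks to the non-vanishing of the denominators and the zero-average normalization on the diagonal. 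Finally, undoing the conjugation $F = P^{-1}\tilde F P$ preserves the estimate up to a multiplicative constant depending on $P$, which may be absorbed into $C_3$.
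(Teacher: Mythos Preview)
Your proposal is correct and follows essentially the same approach as the paper: conjugate to diagonal $A$, split the matrix equation into $m$ diagonal scalar equations (handled by Lemma~\ref{lemma cohomological circle}) and $m^2-m$ off-diagonal ones (handled via Fourier expansion and the small-divisor bounds), then undo the conjugation. Your case analysis of the off-diagonal denominators is in fact more careful than the paper's, which simply refers back to the computations of Lemma~\ref{normal lemma}; in particular you are right that the mixed real/complex pairings produce phases of the form $h\cdot\arg a$ with $|h|=1$, so condition~\eqref{Dio 2} is implicitly needed as well, even though the lemma statement cites only \eqref{Dio 1} and \eqref{Dio 3}.
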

\begin{proof}
Let $A=\operatorname{diag}(a_1,\ldots,a_m)\in\R^m$ be diagonal and $F\in\Mat_m(\C)$ be given, expanding $L_{2,A}F = G$ we get $m$ equations of the form \[a_j \pa{F^j_{j}(\teta + 2\pi\alpha) - F^j_{j}(\teta)} = G^j_{{j}},\quad j=1,\ldots, m\] and $m^2-m$ equations of the form \[a_j F^{i}_j(\teta + 2\pi\alpha) - a_i F^{i}_{j}(\teta) = G^i_j(\teta),\quad \forall i\neq j, i,\,j = 1,\ldots, m.\] 

where we denoted $F^i_j$ the element corresponding to the $i$-th line and $j$-th column of the matrix $F(\teta)$. Taking into account condition \eqref{Dio 1}, the thesis follows from the same computations of Lemma \ref{normal lemma}.\\
Eventually, to recover the general case, we consider the transition matrix $P\in\GL_m(\C)$ and the equation \[(P F (\teta + 2\pi\alpha) P^{-1} P A P^{-1}) - P A P^{-1}P F(\teta)P^{-1} = PGP^{-1};\] letting $\tilde{F}=PFP^{-1}$ and $\tilde{G}=PGP^{-1}$, the equation is of the previous kind and by conditions \eqref{Dio 1}-\eqref{Dio 3}, via the same kind of calculations, we get the thesis. It remains to recover $G=P^{-1}\tilde{G}P$.
\end{proof}

\section{Inversion of the operator $\phi$}
\label{inversion of phi}
The following theorem represents the main result of this first part, from which the normal form Theorem \ref{theorem Moser diffeo} follows. \\ Let us fix $P^0\in U_s(\alpha,A)$ and note $V^{\sigma}_{s}=\set{Q\in V : \abs{Q - P^0}_s<\sigma}$ the ball of radius $\sigma$ centered at $P^0$.
\begin{thm}
\label{abstract Moser} The operator $\phi$ is a local diffeomorphism in the sense that for every $s<s+\sigma<1$ there exists $\eps>0$ and a unique $C^\infty$-map $\psi$ $$\psi : V^\eps_{s+\sigma}\to \G_{s}\times U_s(\alpha,A)\times\Lambda$$ such that $\phi\circ\psi = \id.$ Moreover $\psi$ is Whitney-smooth with respect to $(\alpha,A)$. 
\end{thm}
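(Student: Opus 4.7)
The plan is to reduce Theorem \ref{abstract Moser} to the abstract inverse function theorem in analytic scales (Theorem \ref{teorema inversa}), whose hypotheses require only that the linearization $D\phi$ admit a bounded right inverse with a fixed loss of analyticity. The whole analytical content of the proof is therefore concentrated in the invertibility of $D\phi(\id, P^0, 0)$, a task for which the homological Lemmas \ref{lemma cohomological circle}, \ref{normal lemma} and \ref{matrix lemma} of Section \ref{section difference equations} were tailored. The spaces $U(\alpha, A)$ and $\Lambda$ play complementary roles in this inversion: the free higher-order coefficients of $\dot P \in T_{P^0} U(\alpha, A)$ span the unrestricted directions, while the counter-terms $(\dot\beta, \dot b, \dot B)\in\Lambda$, with their constraints $(A-I)\dot b = 0$ and $[A,\dot B] = 0$, are exactly what is needed to absorb the finite-dimensional cokernels of the cohomological operators $L_{1,A}$ and $L_{2,A}$.

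First I would compute the linearization. A direct expansion at $(\id, P^0, 0)$ gives
\begin{equation*}
D\phi(\id, P^0, 0) \cdot (\dot G, \dot P, \dot\lambda) = \dot G \circ P^0 - DP^0 \cdot \dot G + \dot P + \bigl(\dot\beta,\; \dot b + \dot B \cdot P^0_r\bigr),
\end{equation*}
where $P^0_r$ denotes the $r$-component of $P^0$. Given $\dot Q \in V_{s+\sigma}$, I would solve $D\phi\cdot(\dot G,\dot P,\dot\lambda)=\dot Q$ in a triangular order. At $r=0$, the normal component reduces to $\dot R_0(\teta+2\pi\alpha)-A\cdot\dot R_0(\teta)+\dot b=\dot Q_r(\teta,0)$, solved by Lemma \ref{normal lemma}, with $\dot b\in\ker(A-I)$ absorbing the obstruction from an eigenvalue $1$ of $A$ if present; then the tangential component yields $\dot\varphi(\teta+2\pi\alpha)-\dot\varphi(\teta)+\dot\beta = \dot Q_\teta(\teta,0) + p_1(\teta)\dot R_0(\teta)$, solved by Lemma \ref{lemma cohomological circle}, with $\dot\beta$ the mean value. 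The $r$-linear normal part reduces to a matrix equation $\dot R_1(\teta+2\pi\alpha)A - A\dot R_1(\teta) + \dot B = (\text{known from previous steps})$, solved by Lemma \ref{matrix lemma}, with $\dot B$ satisfying $[A,\dot B]=0$ absorbing the averages of the diagonal entries. The $r$-linear tangential part and all higher-order components are then absorbed by the unconstrained coefficients of $\dot P$. Each step is uniquely solvable with a tame estimate of type $\lesssim \gamma^{-1}\sigma^{-(n+\tau+1)}\abs{\dot Q}_{s+\sigma}$, and all share a single fixed loss $\sigma$ of analyticity radius.

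Once the linearized inversion is established at $(\id, P^0, 0)$, I would invoke Theorem \ref{teorema inversa}. Its hypotheses require an analogous right inverse of $D\phi$ at nearby base points $(G, P, \lambda)$ as well, and this is precisely where the deformed norm $\abs{\cdot}_{G, s}$ becomes essential: composition with $G \in \G_{s+\sigma}^{\sigma/n}$ perturbs the cohomological operators but preserves the form of the tame estimates, without any additional loss in the radius, provided $\sigma/n$ is small enough. The $C^\infty$ character and uniqueness of $\psi$ then follow directly from the abstract theorem, while Whitney-smoothness in $(\alpha, A)$ follows because the small divisors in Lemmas \ref{lemma cohomological circle}--\ref{matrix lemma} depend continuously on $\alpha$ and on the eigenvalues of $A$, and so does every subsequent estimate.

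The main obstacle I anticipate is twofold. First, the triangular coupling among the components of the linearized system must be traversed in the correct order while sharing a common loss $\sigma$, so that the cumulative bounds remain of the tame type required by Theorem \ref{teorema inversa}. Second, the matching of finite codimensions has to be verified carefully: $\dim\Lambda$ must equal the total cokernel dimension of $L_{1,A}$ and $L_{2,A}$, and this is exactly what the constraints $(A-I)b = 0$ and $[A, B] = 0$ encode. Once these two points are settled, the fast convergence of the Newton-like scheme is handed off entirely to the abstract inverse function theorem, following the strategy alternative to Zehnder's mentioned in the introduction.
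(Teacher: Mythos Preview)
Your strategy matches the paper's: reduce to Theorem \ref{teorema inversa} by establishing a tame right inverse for $\phi'$ and invoking the regularity Propositions \ref{lipschitz}--\ref{Whitney} of the appendix. Two points deserve more care, however.

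First, Theorem \ref{teorema inversa} requires not only the bound \eqref{bound phi'^-1} on $\phi'^{-1}$ but also the bound \eqref{bound phi''} on $\phi''$; you never mention the latter. The paper checks it separately in Proposition \ref{phi''}. The computation is routine (differentiate $\phi$ twice, precompose by $G$, apply Cauchy), but it is a hypothesis you cannot omit.

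Second, and more substantively, your linearized inversion is carried out only at $(\id,P^0,0)$, and you then assert that ``composition with $G$ perturbs the cohomological operators but preserves the form of the tame estimates.'' The paper does not argue by perturbation from the identity; instead Proposition \ref{proposition phi' diffeo} inverts $\phi'$ directly at a general $(G,P,\lambda)$ by introducing $\dot G=G'^{-1}\cdot\delta G$ and pulling back by $G$. The cohomological equations for $\dot\varphi,\dot R_0,\dot R_1$ then retain their constant-coefficient form, but the counter-term $\delta\lambda$ no longer enters as a constant: it becomes $G'^{-1}\!\circ P\cdot M^{-1}\cdot T_{\delta\lambda}\circ G\circ P$, whose Taylor coefficients $(\dot\beta,\dot b,\dot B)$ depend affinely on $\delta\lambda$ through $G$. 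One must then check that the map $\delta\lambda\mapsto\bar\lambda$ (the obstruction averages) is a local diffeomorphism of $\Lambda$, which it is because its derivative is $-\id$ at $G=\id$ and stays invertible for $\abs{G-\id}_{s_0}\leq\eps_0$. This finite-dimensional implicit-function step is the mechanism by which $\delta\lambda$ is uniquely determined at general $G$, and your sketch does not account for it. Similarly, uniqueness and $C^\infty$-smoothness of $\psi$ do not come from Theorem \ref{teorema inversa} itself (which gives only a right inverse) but from the separate Propositions \ref{lipschitz} and \ref{smoothness}; Whitney-smoothness in $(\alpha,A)$ comes from Proposition \ref{Whitney}, which requires uniform estimates over a closed Diophantine set rather than mere continuous dependence of the small divisors.
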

This result will follow from the inverse function theorem \ref{teorema inversa} and regularity propositions \ref{smoothness}-\ref{lipschitz}-\ref{Whitney}.\\ In order to solve locally $\phi(x)=y$, we use the remarkable idea of Kolmogorov and find the solution by composing infinitely many times the operator \[x = (g,u,\lambda)\mapsto x + \phi'^{-1}(x)\cdot(y - \phi(x)),\] on extensions $\normal{T}^n_{s+\sigma}$ of shrinking width.\\At each step of the induction, it is necessary that $\inderiv{\phi}(x)$ exists at an unknown $x$ (not only at $x_0$) in a whole neighborhood of $x_0$ and that $\inderiv{\phi}$ and $\phi''$ satisfy a suitable estimate, in order to control the convergence of the iterates.\\
The main step is to check the existence of a right inverse for 
\[\phi'(G,P,\lambda): T_G\mathcal{G}^{\sigma/n}_{s+\sigma}\times\overrightarrow{{U}}_{s+\sigma}\times\Lambda\to {V}_{G,s},\] if $G$ is close to the identity. We indicated with $\overrightarrow{U}$ the vector space directing $U(\alpha,A)$.
\begin{prop}
\label{linear proposition diffeo}
 There exists $\eps_0$ such that if $(G,P,\lambda)\in\G^{\eps_0}_{s+\sigma}\times  U_{s+\sigma}(\alpha,A)\times\Lambda$, for all $\delta Q \in V_{G,s+\sigma}= G^{\ast}\A(\normal{T}^n_{s+\sigma},\C^{n+m})$, there exists a unique triplet $(\delta G, \delta P, \delta\lambda)\in T_G\G_{s}\times \overrightarrow{ U}_s\times\Lambda$ such that 
\begin{equation}
\phi'(G,P,\lambda) \cdot (\delta G,\delta P,\delta \lambda) = \delta Q.
\label{linear equation diffeo}
\end{equation}
Moreover we have the following estimates 
\begin{equation}
\max (\abs{\delta G}_{s},\abs{\delta P}_{s},\abs{\delta\lambda})\leq \frac{C'}{\sigma^{\tau'}}\abs{\delta Q}_{G,s+\sigma},
\end{equation}
$C'$ being a constant possibly depending on $\abs{\pa{\pa{G-\id},P - \pa{\teta + 2\pi\alpha, A\cdot r}}}_{s+\sigma}$.
\label{proposition phi' diffeo}
\end{prop}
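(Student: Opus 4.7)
My plan is to reduce the linearized equation at $(G, P, \lambda)$ to a model equation at $(\id, P^0, 0)$ by exploiting the $G$-deformed norm. Setting $\delta q = \delta Q \circ G \in \A(\normal{T}^n_{s+\sigma}, \C^{n+m})$, a direct computation of $\phi'$ yields (after pullback by $G$) an equation on $\normal{T}^n_{s+\sigma}$ of the form
\[
\delta G \circ P^0 - (P^0)' \cdot \delta G + \delta P + \delta T_\lambda \circ P^0 + R[\delta G, \delta P, \delta\lambda] = \delta q,
\]
where $\delta T_\lambda = (\delta\beta, \delta b + \delta B \cdot r)$ and the residual operator $R$ collects terms that vanish at $(\id, P^0, 0)$ and are controlled by $\abs{G - \id}_{s+\sigma}$ and $\abs{P - (\teta + 2\pi\alpha, A\cdot r)}_{s+\sigma}$. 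I first invert the model linearization (with $R = 0$), then recover the general inverse by Neumann series provided $\eps_0$ is small enough; this perturbative step is the source of the dependence of $C'$ on these quantities.

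For the model inversion, I expand both sides in Taylor series in $r$. Writing $\delta G = (\delta\varphi(\teta), \delta R_0(\teta) + \delta R_1(\teta) \cdot r)$ and noting that $\delta P \in \overrightarrow{U}$ has $\teta$-component of order $r$ and $r$-component of order $r^2$, the system becomes triangular. At order $r^0$ the $r$-component reads
\[
\delta R_0(\teta + 2\pi\alpha) - A \cdot \delta R_0(\teta) + \delta b = \delta q_r(\teta, 0),
\]
solved by Lemma \ref{normal lemma} once $\delta b \in \ker(A - I)$ has been set so as to absorb (via Lemma \ref{lemma cohomological circle} on the eigenspace of eigenvalue $1$) the corresponding average. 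At order $r^0$ the $\teta$-component is
$\delta\varphi(\teta + 2\pi\alpha) - \delta\varphi(\teta) + \delta\beta = \tilde g(\teta)$,
where $\tilde g$ involves the already known $\delta R_0$; Lemma \ref{lemma cohomological circle} then produces $\delta\varphi$ with $\delta\beta$ absorbing the mean and the normalization $\delta\varphi(0) = 0$ fixing the constant. At order $r^1$ the $r$-component takes the form
\[
\delta R_1(\teta + 2\pi\alpha) \cdot A - A \cdot \delta R_1(\teta) + \delta B \cdot A = \tilde H(\teta),
\]
and is solved by Lemma \ref{matrix lemma}: the constraint $[A, \delta B] = 0$ is precisely what is needed to absorb the diagonal averages of $\tilde H$ in the eigenbasis of $A$.

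The remaining equations — the $\teta$-component at order $r^1$ and both components at orders $r^k$ for $k \geq 2$ — involve no small divisors and algebraically determine the coefficients of $\delta P$. Uniqueness of the triplet then follows from the uniqueness clauses in Lemmas \ref{lemma cohomological circle}, \ref{normal lemma}, \ref{matrix lemma} together with the normalizations $\delta\varphi(0) = 0$, $(A - I)\delta b = 0$, $[A, \delta B] = 0$. Concatenating the three loss-of-analyticity estimates yields the claimed bound with some $\tau'$ depending only on $n$ and $\tau$. The main obstacle will be the quantitative perturbation step: since the model inverse has norm of order $\sigma^{-\tau'}$, the Neumann absorption of $R$ requires a matching smallness of $\abs{G - \id}_{s+\sigma}$ and $\abs{P - (\teta + 2\pi\alpha, A\cdot r)}_{s+\sigma}$, which determines $\eps_0$ and the dependence of $C'$ on these norms.
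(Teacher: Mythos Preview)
Your handling of the three cohomological equations via Lemmata \ref{lemma cohomological circle}, \ref{normal lemma}, \ref{matrix lemma}, in the correct triangular order, is right and mirrors the paper. The gap is the passage from the general base point $(G,P,\lambda)$ to the model $(\id,P^0,0)$: a Neumann series does not work in this analytic scale. The residual $R$ is not a bounded operator on a fixed space of the scale --- terms coming from $G'\circ P$ and from $\delta G\circ P-\delta G\circ P^0$ already require a Cauchy estimate, so $R:E_{s+\sigma}\to F_s$ carries a factor of order $\sigma^{-1}\eps_0$. Composed with the model inverse (of order $\sigma^{-\tau'}$), the would-be contraction factor is $\sigma^{-\tau'-1}\eps_0$, which no \emph{fixed} $\eps_0$ makes $<1$ uniformly in $\sigma$. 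More structurally, each Neumann iterate consumes a fresh strip of analyticity, so the series never lands in a single space of the scale; this is exactly why tame linear operators are not inverted by perturbation from a reference point.

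The paper bypasses this by an \emph{exact algebraic} reduction rather than a perturbative one. Setting $\dot G:=G'^{-1}\cdot\delta G$ and left-multiplying the pulled-back linearized equation by $(G'^{-1}\circ P)\,M^{-1}$ (with $M=T_\lambda'$) produces
\[
\dot G\circ P-P'\cdot\dot G+\delta P=(G'^{-1}\circ P)\,M^{-1}\bigl(\delta Q\circ G+T_{\delta\lambda}\circ G\circ P\bigr),
\]
whose left-hand side is \emph{independent of $G$ and $\lambda$}. Its Taylor expansion along $r=0$ yields, at \emph{every} $(G,P,\lambda)$, the constant-coefficient operators $\dot\varphi(\cdot+2\pi\alpha)-\dot\varphi$, $L_{1,A}\dot R_0$, $L_{2,A}\dot R_1$; the $P$-dependent coefficients $p_1,P_2$ appear only as lower-order source terms, absorbed by the triangular structure. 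All $G$-dependence is pushed into the right-hand side and into the relation between $\delta\lambda$ and its transported avatar $\dot\lambda$. The smallness $\abs{G-\id}\le\eps_0$ is then used only in a \emph{finite-dimensional} implicit-function step: the affine map $\delta\lambda\mapsto\bar\lambda$ (the averages obstructing the three equations) has derivative $-\id$ at $G=\id$ and remains invertible nearby, with no loss of analyticity. One finally recovers $\delta G=G'\cdot\dot G$.
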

\begin{proof}
Differentiating with respect to $x = (G,P,\lambda)$, we have \[\delta (T_\lambda\circ G\circ P\circ G^{-1}) = T_{\delta\lambda}\circ (G\circ P\circ G^{-1}) + T'_{\lambda}\circ (G\circ P \circ G^{-1})\cdot \delta(G\circ P\circ G^{-1})\] hence \[M \cdot(\delta G\circ P + G'\circ P \cdot\delta P - G'\circ P\cdot P'\cdot G'^{-1}\cdot\delta G)\circ G^{-1} = \delta Q - T_{\delta\lambda}\circ(G\circ P \circ G^{-1}),\] where $M=\pa{\mat{I & 0\\ 0 & I + B}}$.\\
The data is $\delta Q$ while the unknowns are the "tangent vectors" $\delta P \in O(r)\times O(r^2)$, $\delta G$ (geometrically, a vector field along $G$) and $\delta\lambda \in \Lambda$.\\
Pre-composing by $G$ we get the equivalent equation between germs along the standard torus $\text{T}^n_0$ (as opposed to $G (\text{T}^n_0)$):
\begin{equation*}
M\cdot(\delta G\circ P + G'\circ P\cdot \delta P - G'\circ P\cdot P'\cdot G'^{-1}\cdot\delta G) =  \delta Q \circ G - T_{\delta\lambda}\circ G\circ P ;
\end{equation*} 
multiplying both sides by $(G'^{-1}\circ P)M^{-1}$, we finally obtain
\begin{equation}
\dot G \circ P - P'\cdot \dot G + \delta P =  \inderiv{G}\circ P \cdot M^{-1}\delta Q\circ G + \inderiv{G}\circ P\cdot M^{-1} T_{\delta\lambda} \circ G\circ P,
\label{linear equation diffeo2}
\end{equation}
where $\dot G = \inderiv{G}\cdot\delta G$.\\
Remark that the term containing $T_{\delta\lambda}$ is not constant; expanding along $r=0$, it reads  \[T_{\dot\lambda} = \inderiv{G}\circ P\cdot M^{-1}\cdot T_{\delta\lambda}\circ G \circ P = (\dot\beta + O(r), \dot b + \dot B\cdot r + O(r^2)).\]
The vector field $\dot G$ (geometrically, a germ along $\text{T}^n_0$ of tangent vector fields) reads \[\dot G(\teta,r) = (\dot\varphi(\teta), \dot R_0(\teta) + \dot R_1(\teta)\cdot r).\] The problem is now: $G,\lambda,P, Q$ being given, find $\dot G,\delta P$ and $ \dot\lambda$, hence $\delta\lambda$ and $\delta G$. \\ We are interested in solving the equation up to the $0$-order in $r$ in the $\teta$-direction, and up to the first order in $r$ in the action direction; hence we consider the Taylor expansions along $\text{T}^n_0$ up to the needed order.\\
We remark that since $\delta P = (O(r), O(r^2))$, it will not intervene in the cohomological equations given out by \eqref{linear equation diffeo2}, but will be uniquely determined by identification of the reminders.\\
Let us proceed to solve the equation \eqref{linear equation diffeo2}, which splits into the following three 
\begin{equation}
\begin{aligned}
\dot\varphi(\teta + 2\pi\alpha) - \dot\varphi(\teta) + p_1 \cdot\dot R_0 &= \dot q_0 + \dot\beta \\
\dot R_0(\teta + 2\pi\alpha) -  A\cdot\dot R_0(\teta) &= \dot Q_0 + \dot b\\
\dot R_1(\teta + 2\pi\alpha)\cdot A -  A\cdot \dot R_1(\teta)  &=
 \dot Q_1 - ( 2P_2\cdot \dot R_0 + \dot R_0'(\teta + 2\pi\alpha)\cdot p_1)+ \dot B.
\end{aligned}
\label{linearized system}
\end{equation}
The first equation is the one straightening the tangential dynamics, while the second and the third ones are meant to relocate the torus and straighten the normal dynamics. \\ For the moment we solve the equations "modulo $\dot\lambda$"; eventually $\delta\lambda$ will be uniquely chosen to kill the constant component of the given terms belonging to the kernel of $A- I$ and $[A, \cdot]$ respectively, and solve the cohomological equations. \\ In the following we will repeatedly apply Lemmata \ref{lemma cohomological circle}-\ref{normal lemma}-\ref{matrix lemma} and Cauchy's inequality. Furthermore, we do not keep truck of constants - just note that they may only depend on $n$ and $\tau$ (from the Diophantine condition) and on $\abs{G -\id}_{s+\sigma}$ and $\abs{P - \pa{(\teta + 2\pi\alpha), A\cdot r)}}_{s+\sigma}$, and refer to them as $C$.
\begin{itemize}[leftmargin=*]
\item First, second equation has a solution $$ \dot R_0 = L_{1,A}^{-1}(\dot Q_0 + \dot b - \bar{b}), $$ where $$\bar{b} =  \int_\T \dot Q_0 + \dot b\,\frac{d\teta}{2\pi},$$ and $$\abs{\dot R_0}_s\leq \frac{C}{\gamma^2\sigma^{\tau + n}}\abs{\dot Q_0 + \dot b}_{s+\sigma}.$$
\item Second, we have \[\dot\varphi(\teta + 2\pi\alpha) - \dot\varphi(\teta) + p_1 \cdot \dot R_0 = \dot q_0 + \dot\beta - \bar\beta,\] where $\bar\beta = \int_{\T^n} \dot q_0 - p_1\cdot R_0 + \dot\beta \,\frac {d\teta}{(2\pi)^n},$ hence $$\dot\varphi = L_\alpha^{-1} (\dot q_0 + \dot\beta - \bar\beta), $$ satisfying \[\abs{\dot\varphi}_{s-\sigma}\leq \frac{C}{\gamma\sigma^{\tau + n +1}}\abs{\dot q_0 + \dot\beta}_{s + \sigma}\]
\item Third, the solution of equation in $\dot R_1$ is $$\dot R_1 = L_{2,A}^{-1}(\tilde{Q_1} + \dot B - \bar B),$$
hiving denoted $\tilde{Q_1}=  \dot Q_1 - ( 2P_2\cdot \dot R_0 + \dot R_0'(\teta + 2\pi\alpha)\cdot p_1),$
and $\bar{B}$ the average of $\tilde{Q}_1 + \dot{B}$. It satisfies $$\abs{\dot R_1}_{s-2\sigma}\leq \frac{C}{\gamma\sigma^{n+\tau}}\abs{\tilde{Q_1} + \dot B}_{s+\sigma}.$$
\end{itemize}
We now handle the unique choice of the correction $\delta\lambda = (\delta\beta, \delta b + \delta B\cdot r)$ given by $T_{\delta\lambda}$. If $\bar\lambda = (\bar\beta,\bar b + \bar B\cdot r)$, the map $f: \Lambda\to\Lambda,\, \delta\lambda\mapsto \bar\lambda$ is well defined. In particular when $G=\id$, $\frac{\partial f}{\partial\delta\lambda} = - \id$ and it will remain bounded away from $0$ if $G$ stays sufficiently close to the identity: let say $\abs{G -\id}_{s_0}\leq \eps_0,$ for $s_0<s$.  In particular, $-\bar\lambda$ is affine in $\delta\lambda$, the system to solve being triangular of the form $\average{a(G,\dot Q) + A(G)\cdot\delta\lambda}=0$, with diagonal close to $1$ if the smalleness condition above is assumed. Under these conditions $f$ is a local diffeomorphism and there exists unique $\delta\lambda$ such that $f(\delta\lambda)= 0$, satisfying $$\abs{\delta\lambda} \leq \frac{C}{\sigma^{\tilde\tau}}\abs{\delta Q}_{G,s+\sigma},$$ for some $\tilde\tau>1.$
We finally have  
\[\abs{\dot{G}}_{s-2\sigma} \leq \frac{C}{\gamma}\,\frac{1}{\sigma^{\tau ''}}\abs{\delta Q}_{G,s+\sigma}. \]

Now, from the definition of $\dot G = G'^{-1}\cdot\delta G$ we get $\delta G = G'\cdot\dot G$, hence similar estimates hold for $\delta G$: \[\abs{\delta G}_{s-\sigma}\leq \sigma^{-1}(1 + \abs{G-\id}_{s}) \frac{C}{\sigma^{\tau'''}}\abs{\delta Q}_{G,s+\sigma}.\] Eventually, equation \eqref{linear equation diffeo2} uniquely determines $\delta P$.\\ Up to redefining $\sigma' = \sigma/3$ and $s' = s + \sigma$, we have the wanted estimates for all $s', \sigma' : s'< s' + \sigma'$.
\end{proof}

\begin{prop}[Boundness of $\phi''$] The bilinear map $\phi''(x)$ \[\phi''(x): (T_G\G^{\sigma}_{s+\sigma}\times \overrightarrow{U}_{s+\sigma}(\alpha,A)\times \Lambda)^{\tensor 2}\to \A(\normal{T}^n_s,\normal{T}^n_\C),\]
satisfies the estimates \[\abs{\phi''(x)\cdot\delta x^{\tensor 2}}_{G,s}\leq \frac{C''}{\sigma^{\tau''}}\abs{\delta x}^2_{s+\sigma},\] $C''$ being a constant depending on $\abs{x}_{s+\sigma}$.
\label{phi''}
\end{prop}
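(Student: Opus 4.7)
The plan is to split $\phi = T_\lambda\circ\Phi$ with $\Phi(G,P)=G\circ P\circ G^{-1}$ and exploit that $T_\lambda$ is affine in $\lambda$. This immediately kills $\partial_\lambda^2\phi$; the mixed second derivatives $\partial_\lambda\partial_{(G,P)}\phi$ reduce to applying the constant linear part of $T_\lambda$ to $\Phi'\cdot(\delta G,\delta P)$, which is bounded by the same ingredients that appear in the proof of Proposition \ref{proposition phi' diffeo}. The substantive content is therefore the bound on $\Phi''(G,P)\cdot(\delta G,\delta P)^{\tensor 2}$.

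Since the output is measured in the deformed norm $\abs{\cdot}_{G,s}=\abs{(\cdot)\circ G}_s$, I work throughout after pre-composition by $G$, which converts every spurious $G^{-1}$ arising from the variational formula $\delta(G^{-1})=-\inderiv{G}\circ G^{-1}\cdot \delta G\circ G^{-1}$ into the identity. The first variation is
\[\Phi'\cdot(\delta G,\delta P) = \delta G\circ P\circ G^{-1} + (G'\circ P\circ G^{-1})\cdot(\delta P\circ G^{-1}) + (G'\circ P\circ G^{-1})\cdot(P'\circ G^{-1})\cdot\delta(G^{-1}),\]
and a second application of the chain rule produces finitely many terms, each being a product of at most two derivatives (of first or second order) of $G$ and $P$ together with the fixed tangent data $\delta G,\delta P$ and their derivatives, all composed with $G^{-1}$ or $P\circ G^{-1}$.

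Each such term is then estimated via three standard ingredients: (i) Cauchy's inequality on a sub-enlargement gives bounds of the type $\abs{G''}_{s+\sigma/2}, \abs{P''}_{s+\sigma/2}, \abs{(\delta G)'}_{s+\sigma/2},\ldots$ by $\sigma^{-1}$ or $\sigma^{-2}$ times the corresponding zeroth-order norms; (ii) since $G\in\G^{\sigma/n}_{s+\sigma}$ is close to $\id$, the image of $\normal{T}^n_s$ under $G^{-1}$ lies inside $\normal{T}^n_{s+\sigma/2}$, so every nested composition appearing is analytic on $\normal{T}^n_s$ with weighted norm controlled by that of the outer factor on its own domain (the same mechanism that makes $\phi$ well-defined in Theorem \ref{theorem well def} and Corollary \ref{cor well def}); (iii) the submultiplicativity $\abs{fg}_s\leq\abs{f}_s\abs{g}_s$ of the weighted norm combines factors in products. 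Summing over the finitely many chain-rule terms yields the claimed bound, with $\tau''$ the largest accumulated power of $\sigma^{-1}$ and $C''$ depending polynomially on $\abs{G-\id}_{s+\sigma}$ and $\abs{P-(\teta+2\pi\alpha,A\cdot r)}_{s+\sigma}$, hence on $\abs{x}_{s+\sigma}$. The only real difficulty is the combinatorial bookkeeping of the chain-rule terms and checking that every nested composition stays inside the analyticity domain of the outer factor, both handled uniformly under the smallness assumption on $G-\id$.
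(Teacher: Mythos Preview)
Your proposal is correct and follows essentially the same route as the paper: the paper simply writes out the full second differential of $T_\lambda\circ G\circ P\circ G^{-1}$ as a sum of chain-rule terms, precomposes with $G$ to pass to the deformed norm, and declares that the estimate follows. Your decomposition $\phi=T_\lambda\circ\Phi$ and the observation that $T_\lambda$ is affine in $\lambda$ is a mild organizational refinement of the same computation, and your explicit mention of the three ingredients (Cauchy, domain control via $G\in\G^{\sigma/n}_{s+\sigma}$, submultiplicativity of $\abs{\cdot}_s$) spells out exactly what the paper leaves implicit in the phrase ``the estimate follows.''
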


\begin{proof}
Differentiating twice $\phi(x)$, yields 
\begin{align*}
 &-M \{[\delta G'\circ P\cdot\delta P + \delta G'\circ P\cdot\delta P + G''\circ P\cdot \delta P^2 - \pa{\delta G'\circ P + G''\circ P\cdot\delta P}\cdot P'\cdot G'^{-1}\cdot\delta G \\
&- G'\circ P\cdot\pa{\delta P'\cdot (- G'^{-1}\cdot\delta G'\cdot G'^{-1})\cdot\delta G}] \circ G^{-1} +\\
 &+[ \delta G'\circ P\cdot\delta P + \delta G'\circ P\cdot\delta P + G''\circ P\cdot \delta P^2 - \pa{\delta G'\circ P + G''\circ P\cdot\delta P}\cdot P'\cdot G'^{-1}\cdot\delta G \\
&- G'\circ P\cdot\pa{\delta P'\cdot (- G'^{-1}\cdot\delta G'\cdot G'^{-1})\cdot\delta G}]'\circ G^{-1}\cdot (- G'^{-1}\cdot\delta G)\circ G^{-1}\}. 
\end{align*}
Once we precompose with $G$, the estimate follows.
\end{proof}

Hypothesis of Theorem \ref{teorema inversa} are satisfied, hence the existence of $(G,P,\lambda)$ such that $Q=T_{\lambda}\circ G\circ P \circ G^{-1}$. Uniqueness and smoothness of the normal form follows from Propositions \ref{lipschitz} and \ref{smoothness}-\ref{Whitney}. Theorem \ref{abstract Moser} follows, hence Theorem \ref{theorem Moser diffeo}.

\comment{
\begin{rmk}
In Theorem \ref{theorem Moser diffeo} we did not assume any torsion property to be satisfied by diffeomorphisms $P\in U(\alpha, A)$ defined in \eqref{P0}. Suppose now that on $\T^n\times\R^m$, $m\geq n$, the diffeomorphism $P$ has a twist in the following sense:\[P(\teta,r) = (\teta + 2\pi\alpha + p_1(\teta)\cdot r + O(r^2), A\cdot r + O(r^2)),\] is such that \[\operatorname{rank}\pa{\int_{\T^n} p_1(\teta)\,d\teta}=n.\] In this case, taking advantage of this torsion hypothesis, one can avoid the translation parameter $\beta\in\R^n$ and solve the equation determining $\dot\varphi$ in system \eqref{linearized system} by a well choice of the constant part of $\dot R_0$. In fact, defining $\dot R_0 = \bar{R} + \tilde{R_0},$ where $\bar{R}=\int_{\T^n}\dot R_0\, d\teta$ and $\tilde{R_0}= \dot R_0 - \bar{R}$,  the equation \[\int_{\T^n} p_1(\teta)\cdot\bar{R}\, d\teta = \pa{\int_{\T^n} \dot q_0 - p_1\cdot\tilde{R_0}\, d\teta}\] determines $\bar{R}$. If $n=m$, $\bar{R}$ is uniquely determined.\\ Of course, the constant part of $\dot R_0$ being determined, it has to be absorbed by the translation parameter $b\in\R^m$ in the normal direction, needed to keep track of the non necessarily $0$ average of $\dot Q_0$ (see the second equation of system \eqref{linearized system}). In the following we will show that $\beta$ can be eliminated by simple application of the classic implicit function theorem.
\end{rmk}

}

\comment{\begin{prop} Let $P^0: \T^n\times\R^n\to\T^n\times\R^n$ in $U(\alpha,A)$ have a twist. If $Q$ is sufficiently close to $P^0$, there exists a unique $(G,P,\lambda)\in\G\times\U(\alpha,A)\times\Lambda(b,B)$, in a neighborhood of $(\id, P^0, 0)$ such that $Q = T_{\lambda}\circ G\circ P\circ G^{-1}$.
\end{prop} 
\begin{proof}
Up to a change of variables we can further assume that the matrix coefficient $p_1(\teta)= p_1$ is constant, which is a slighting simplifying, but non essential, hypothesis. \\ We first note that considering the family of maps $P^0_c(\teta,r) = P^0(\teta, c + r)$
\end{proof}}


\section{A generalization of R\"ussmann's theorem}
\label{Russmann theorem appendix}

Theorem \ref{theorem Moser diffeo} provides a normal form that does not rely on any non-degeneracy assumption; thus, the existence of a translated Diophantine, reducible torus will be subordinated to eliminating the "parameters in excess" $(\beta, B)$ using a non-degeneracy hypothesis. We will implicitly solve $B=0$ and $\beta=0$ by using the normal frequencies as \co{free} parameters and a torsion hypothesis respectively. R\"ussmann's classic result will be the immediate small dimensional case.

\comment{On $\T^n\times\R^n$, we will say that a diffeomorphism $P\in U (\alpha, I+A)$ has a twist if \[P(\teta,r) = (\teta + 2\pi\alpha + p_1(\teta)\cdot r + O(r^2), (I + A) \cdot r + O(r^2)),\] is such that \[\operatorname{det}\pa{\int_{\T^n} p_1(\teta)\,d\teta}\neq 0.\] }

\subsection*{Elimination of $B$}
Let $\Delta^s_m(\R)\subset\Mat_m(\R)$ be the open space of matrices with simple, real, non $0$ eigenvalues.  In $\T^n\times\R^m$, let us define
\begin{equation*}
\widehat{U} = \bigcup_{I + A\in\Delta^s_m(\R)} U (\alpha,I + A).
\end{equation*}
We recall that those $P's\in U(\alpha, I + A)$ are diffeomorphisms of the form 
$$ P(\teta,r)= (\teta + 2\pi\alpha + O(r), (I + A)\cdot r + O(r^2)),$$ on a neighborhood of $\T^n\times\set{0}.$

The following theorem is an intermediate, yet fundamental result to prove the translated torus Theorem \ref{C} and holds without requiring any torsion assumption on the class of diffeomorphisms.

\begin{thm}[Twisted Torus of co-dimension 1]
For every $P^0\in U_{s+\sigma}(\alpha,I + A^0)$ with $\alpha$ Diophantine, and $I + A^0\in\Delta^s_m(\R)$, there is a germ of $C^{\infty}$-maps
\begin{equation*}
\psi: V_{s+\sigma} \to \G_{s}\times \widehat{U}_{s}\times\Lambda(\beta,b),\quad Q\mapsto (G,P,\lambda),
\end{equation*} at $P^0\mapsto (\id, P^0, 0)$, such that $Q = T_\lambda\circ G\circ P\circ G^{-1}$, where $\lambda= (\beta,b)\in\R^{n+1}$.
\label{theorem B=0}
\end{thm}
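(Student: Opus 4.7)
The strategy uses the normal form of Theorem~\ref{theorem Moser diffeo} as a black box and eliminates the matrix correction $B$ via the classical implicit function theorem, trading it for the extra freedom of varying the first-order normal matrix $I+A$ of the base diffeomorphism $P$ inside $\widehat{U}$. The key structural observation is that, since $I+A^0\in\Delta^s_m(\R)$ has simple eigenvalues, its commutant $\mathrm{Comm}(A^0):=\{X\in\Mat_m(\R):[X,A^0]=0\}$ has dimension $m$, which matches both the dimension of the $B$--parameter in $\Lambda$ and the dimension of the affine subspace $A^0+\mathrm{Comm}(A^0)\subset\{A:I+A\in\Delta^s_m\}$ along which I will let $A$ vary.

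\smallskip
\emph{Step 1 (Parametrized normal forms).} For each $A\in A^0+\mathrm{Comm}(A^0)$ near $A^0$, pick the smoothly varying base point $P^0_A:=P^0+(0,(A-A^0)\cdot r)\in U_{s+\sigma}(\alpha,I+A)$, with $P^0_{A^0}=P^0$. Theorem~\ref{abstract Moser}, together with its Whitney-smooth dependence on $(\alpha,A)$, provides a smooth germ
\[
Q\;\longmapsto\;\bigl(G(A,Q),\,P_A(Q),\,(\beta(A,Q),\,b(A,Q),\,B(A,Q))\bigr)
\]
at $(A,Q)=(A^0,P^0)\mapsto(\id,P^0,0)$, with $P_A(Q)\in U(\alpha,I+A)$, $B(A,Q)\in\mathrm{Comm}(A)$, and satisfying $Q=T_{(\beta,b,B)}\circ G\circ P_A\circ G^{-1}$. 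Since $\mathrm{Comm}(A)=\mathrm{Comm}(A^0)$ for $A$ close to $A^0$ (they share the eigenbasis), $B$ is regarded as a smooth map valued in the fixed space $\mathrm{Comm}(A^0)\cong\R^m$.

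\smallskip
\emph{Step 2 (Linearization).} The derivative $\partial_A B|_{(A^0,P^0)}$ is computed directly from the uniqueness of Theorem~\ref{theorem Moser diffeo}. For $\delta A\in\mathrm{Comm}(A^0)$, set $\delta B:=-\delta A\,(I+A^0)^{-1}\in\mathrm{Comm}(A^0)$, $G:=\id$, $(\beta,b):=(0,0)$, and $P:=T_{(0,0,\delta B\cdot r)}^{-1}\circ P^0$. Since $(I+\delta B)^{-1}=I-\delta B+O(\delta A^2)$, a short computation yields
\[
P(\teta,r)=\bigl(\teta+2\pi\alpha+O(r),\,(I+A^0+\delta A)\,r+O(r^2)+O(\delta A^2)\bigr),
\]
so $P\in U(\alpha,I+A^0+\delta A)$ modulo an $O(\delta A^2)$ error, and $P^0=T_{(0,0,\delta B\cdot r)}\circ P$ by construction. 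By uniqueness, this is the decomposition provided by Step~1 at $(A^0+\delta A,P^0)$, whence $B(A^0+\delta A,P^0)=-\delta A\,(I+A^0)^{-1}+O(\delta A^2)$. Hence $\partial_A B|_{(A^0,P^0)}:\delta A\mapsto-\delta A\,(I+A^0)^{-1}$ is a linear automorphism of $\mathrm{Comm}(A^0)$, precisely because $I+A^0$ is invertible.

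\smallskip
\emph{Step 3 (Conclusion via the classical IFT).} The classical implicit function theorem applied to $(A,Q)\mapsto B(A,Q)$ yields a unique smooth germ $Q\mapsto A(Q)$ with $A(P^0)=A^0$ solving $B(A(Q),Q)=0$. Setting
\[
\psi(Q):=\bigl(G(A(Q),Q),\,P_{A(Q)}(Q),\,(\beta(A(Q),Q),\,b(A(Q),Q))\bigr),
\]
one obtains a germ $V_{s+\sigma}\to\G_s\times\widehat{U}_s\times\Lambda(\beta,b)$ with $\psi(P^0)=(\id,P^0,0)$ and $Q=T_{(\beta,b)}\circ G\circ P\circ G^{-1}$, as required. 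The main subtlety lies in the linearization of Step~2; exploiting the uniqueness of Theorem~\ref{theorem Moser diffeo} bypasses the triangular hierarchy of cohomological equations from Proposition~\ref{linear proposition diffeo} and reduces the invertibility of $\partial_A B$ to that of $I+A^0$, which is exactly the content of the hypothesis $I+A^0\in\Delta^s_m$.
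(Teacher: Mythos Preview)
Your proof is correct and follows essentially the same route as the paper: parametrize the normal form of Theorem~\ref{theorem Moser diffeo} by the normal matrix $A$, compute $\partial_A B|_{(A^0,P^0)}$ via the uniqueness clause (the paper obtains the exact formula $B(A,P^0)=(A^0-A)(I+A)^{-1}$ and then differentiates, whereas you work modulo $O(\delta A^2)$, but both yield the same linearization $\delta A\mapsto -\delta A\,(I+A^0)^{-1}$), and conclude by the classical implicit function theorem. Your explicit framing via $\mathrm{Comm}(A^0)$ is a mild clarification of the paper's identification of the parameter space with diagonal matrices.
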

\begin{cor}[Twisted torus] If $1$ does not belong to the spectrum of $I+A^0$, the translation correction $b=0$.
\label{corollary twisted}
\end{cor}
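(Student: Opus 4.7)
The plan is to view Theorem \ref{theorem Moser diffeo} as a smooth family parametrized by the linear normal dynamics $A$, and then use the classical, finite-dimensional implicit function theorem to kill the matrix counter-term $B$ by adjusting $A$.

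First I would apply Theorem \ref{abstract Moser} with $A^0$ replaced by each $A$ in a small neighborhood $\mathcal{N}$ of $A^0$ such that $I+A\in\Delta^s_m(\R)$. Since $I+A$ has simple, real, non-zero eigenvalues, the Diophantine condition \eqref{Diophantine Diffeos} reduces to \eqref{Dio 1} on $\alpha$ alone (no $\arg a$ contribution) and is \emph{independent of $A$}. The Whitney-smooth dependence on the normal frequencies in Theorem \ref{abstract Moser} then yields a smooth map
\[
(A,Q)\mapsto \bigl(G(A,Q),\,P(A,Q),\,(\beta,b,B)(A,Q)\bigr),
\]
defined on $\mathcal{N}\times V^\eps_{s+\sigma}$, satisfying $Q=T_\lambda\circ G\circ P\circ G^{-1}$ with $P\in U(\alpha, I+A)$, $A\cdot b=0$ and $[A,B]=0$. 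Here $B=B(A,Q)$ takes values in the $m$-dimensional commutant of $A$.

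The key point is the implicit equation $B(A,Q)=0$. Since $B(A^0,P^0)=0$, it suffices to check that $\partial B/\partial A$ at the base point, restricted to the natural $m$-dimensional transversal of variations $\dot A$ in the commutant of $A^0$, is invertible. To compute this derivative I would redo the linearized analysis of Proposition \ref{linear proposition diffeo} while allowing $\dot P$ to contain an extra linear-in-$r$ term $(0,\dot A\cdot r)$. Only the third equation of the linearized system \eqref{linearized system} is affected: its right-hand side picks up $-\dot A$, so that $\dot B$ and $\dot A$ enter symmetrically. The solvability condition for this matrix cohomological equation (Lemma \ref{matrix lemma}) is that the averaged diagonal part, in the eigenbasis of $A^0$, of the right-hand side vanishes, which amounts to $m$ scalar conditions; setting $\dot B=0$, these are uniquely solved by a $\dot A$ in the commutant of $A^0$. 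Hence $\partial B/\partial A$ is the identity on that commutant (up to sign), and therefore invertible.

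The classical implicit function theorem then furnishes a unique smooth $A=A(Q)$ with $A(P^0)=A^0$ solving $B(A,Q)=0$ for $Q$ in a possibly smaller ball. Setting $\psi(Q)=(G(A(Q),Q),\,P(A(Q),Q),\,\lambda(A(Q),Q))$ yields the desired decomposition with $P\in U(\alpha,I+A(Q))\subset\widehat{U}_s$ and $\lambda=(\beta,b)$, proving Theorem \ref{theorem B=0}. For the corollary, when $1$ is not in the spectrum of $I+A^0$, $A^0$ (and hence $A(Q)$ for $Q$ near $P^0$) is invertible, so the constraint $A(Q)\cdot b=0$ forces $b=0$. I expect the main obstacle to be the precise identification of $\partial B/\partial A$: one must carefully isolate the averaging projection onto the commutant of $A^0$ and match its dimension with that of the admissible variations $\dot A$, a matching that is transparent from the symmetric way $\dot A$ and $\dot B$ enter the third linearized equation.
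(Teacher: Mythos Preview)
Your deduction of the corollary itself is correct and is exactly the paper's argument: once Theorem~\ref{theorem B=0} is in hand, condition~\eqref{condition parameters} reads $A(Q)\cdot b=0$ (since the linear part is $I+A$), and invertibility of $A^0$---equivalently, $1\notin\operatorname{spec}(I+A^0)$---together with the openness of this property forces $b=0$.

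Where you diverge from the paper is in the route to Theorem~\ref{theorem B=0}. You propose to compute $\partial B/\partial A$ by reopening the linearized system~\eqref{linearized system} and tracking how an extra $(0,\dot A\cdot r)$ term in $\delta P$ feeds into the third equation. This is correct in principle, but the paper bypasses it entirely with an elementary algebraic trick: for $Q=P^0$ one can \emph{explicitly} factor $P^0=T_{(0,B\cdot r)}\circ P_A$ by matching linear parts in $r$, namely $(I+B)(I+A)=I+A^0$, which gives $B(A,P^0)=(A^0-A)(I+A)^{-1}$ and hence $\partial B/\partial A\big|_{A=A^0}=-(I+A^0)^{-1}$ directly. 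This is shorter and avoids any discussion of the averaging projection onto the commutant; your approach, by contrast, makes the mechanism (the symmetric appearance of $\dot A$ and $\dot B$ in the obstruction to Lemma~\ref{matrix lemma}) more transparent and would generalize more readily to situations where such an explicit factorization is unavailable.
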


\begin{proof}
Denote $\phi_A$ the operator $\phi$, as now we want $A$ to vary. Let us identify with $\R^m$ the space of diagonal matrices in $\Mat_n(\R)$ and define the map
\begin{equation*}
\hat{\psi}:\R^m\times V_{s+\sigma}\to \G_{s}\times U_{s}(\alpha,I + A)\times\Lambda,\, (A,Q)\mapsto \hat{\psi}_A(Q):=\phi^{-1}_A(Q) = (G, P, \lambda)
\end{equation*}
in the neighborhood of $(A^0, P^0)$, such that $Q = T_{\lambda}\circ G\circ P \circ G^{-1}$. \\ By writing $P^0$ as $$P^0(\teta,r)=(\teta + 2\pi\alpha + O(r), (I + A_0 - \delta A)\cdot r + \delta A\cdot r + O(r^2)),$$ we remark that $P^0 = T_\lambda\circ P_{A}$, where $$\lambda = \pa{0, B (A) = (A^0 - A)\cdot(I + A)^{-1})}, $$ and $ P_A = (\teta + 2\pi\alpha + O(r), (I + A)\cdot r + O(r^2)),\, A = A^0 -\delta A.$\footnote{The terms $O(r)$ and $O(r^2)$ contain a factor $(1 - \delta A \cdot (1 + A_0)^{-1})$.}\\ According to Theorem \ref{theorem Moser diffeo}, $\phi_A(\id, P_A, \lambda) = P^0,$ thus locally for all $A$ close to $A^0$ we have $$\hat{\psi}(A,P^0)=(\id, P_A, B\cdot r),\quad B(A,P^0)= (A^0 - A)\cdot(I + A)^{-1} = \delta A\cdot (I + A^0 - \delta A)^{-1}$$ and, in particular $B(A^0,P^0)= 0 $ and \[\frac{\partial B}{\partial A}_{|A=A^0} = - (I + A^0)^{-1},\] which is invertible, due to hypothesis on the spectrum of $A^0$. 
Hence $A\mapsto B(A)$ is a local diffeomorphism and by the implicit function theorem (in finite dimension) locally for all $Q$ close to $P^0$ there exists a unique $\bar A$ such that $B (\bar A, Q) = 0$. It remains to define $\psi(Q)= \hat{\psi}(\bar A, Q)$.
\end{proof}
The proof of Corollary \ref{corollary twisted} is immediate, by conditions \eqref{condition parameters}.

\begin{rmk}
This twisted-torus theorem relies on the peculiarity of the normal dynamics of the torus $\normal{T}^n_0$. The direct applicability of the implicit function theorem is subordinated to the fact that no arithmetic condition is required on the characteristic (normal) frequencies; beyond that, since having simple, real eigenvalues is an open property, the needed correction $B$ is indeed a diagonal matrix, so that the number of free frequencies (parameters) is enough to solve, implicitly, $B(A)=0$. The generic case of complex eigenvalues is more delicate since one should guarantee that corrections $A^0 + \delta A$ at each step, satisfy Diophantine condition \eqref{Diophantine Diffeos}. It seems reasonable to think that one would need more parameters to control this issue, and verify that the measure of such stay positive; see \cite{Fejoz:2004}.
\end{rmk}


\comment{\section{The translated curve of R\"ussmann}
\label{Russmann theorem appendix}
The diffeomorphisms considered by R\"ussmann are of this kind in a neighborhood of $\normal{T}_0$
\begin{equation}
Q(\teta,r)= (\teta + 2\pi\alpha + t(r) + f(\teta,r), (1 +A) r + g(\teta,r)),
\label{Russmann diffeo}
\end{equation}
where $\alpha$ is Diophantine, $t(0)=0$ and $t'(r)>0$ for every $r$. This represents a perturbation of $$P^0 (\teta,r)= (\teta + \alpha + t(r), (1 +A) r),$$ for which $T_0$ is invariant and carries a rotation $2\pi\alpha$.
\begin{thm}[R\"ussmann]
\label{Russmann theorem} Fix $\alpha\in D_{\gamma,\tau}$ and $$P^0 (\teta,r)=(\teta + 2\pi\alpha + t(r) + O(r^2), (1 + A)r + O(r^2))\,\in U(\alpha,A)$$ such that $t(0)=0$ and $t'(r)>0$.\\  If $Q$ is close enough to $P^0$ there exists a unique analytic curve $\gamma\in\A(\T,\R)$, close to $r=0$, a diffeomorphism $\varphi$ of $\T$ close to the identity and $b\in\R$, close to $0$, such that \[Q (\teta, \gamma(\teta)) = (\conj(\teta), b + \gamma(\conj(\teta))).\]
\end{thm}
Actually in its original version the theorem is stated for $A=0$; to consider the more general case with $A$ close to $0$, does not bring any further difficulties. 

To deduce R\"ussmann's theorem from theorem \ref{theorem Moser diffeo} we need to get rid of the counter-terms $\beta$ and $B$.

\subsubsection{Elimination of $B$}
In order to deduce R\"ussmann's result from theorem \ref{theorem Moser diffeo} we need to reduce the number of translation terms of $T_\lambda$ to one, corresponding to the translation in the $r$-direction ($T_{\lambda= (0, b)}$). As we are not interested in keeping the same normal dynamics of the perturbed diffeomorphism $Q$, up to let $A$ vary and conjugate $Q$ to some well chosen $P_{A}$, we can indeed make the counter term $B\cdot r$ to be zero. \\ Let $\Lambda_2 = \set{\lambda = (\beta, b),\, \beta,b\in\R}$.
\begin{prop}
For every $P^0\in U_{s+\sigma}(\alpha,A_0)$ with $\alpha$ diophantine, there is a germ of $C^{\infty}$ maps
\begin{equation*}
\psi: \A(\normal{T}_{s+\sigma},\T_\C\times\C)\to \G_{s}\times U_{s}(\alpha,A)\times\Lambda_2,\quad Q\mapsto (G,P,\lambda),
\end{equation*} at $P^0\mapsto (\id, P^0, 0)$, such that $Q = T_\lambda\circ G\circ P\circ G^{-1}$.
\label{proposition B=0}
\end{prop}   
\begin{proof}
Denote $\phi_A$ the operator $\phi$, as now we want $A$ to vary. Let us write $P^0$ as $$P^0(\teta,r)=(\teta + 2\pi\alpha + O(r), (1 + A_0 - \delta A)\cdot r + \delta A\cdot r + O(r^2)),$$ and remark that \[P^0 = T_\lambda\circ P_{A},\quad \lambda = \pa{0, B\cdot r = (-\delta A + \frac{\delta A\cdot A_0}{1 + A_0})\cdot r}, \] where $ P_A = (\teta + 2\pi\alpha + O(r), (1 + A)\cdot r + O(r^2))$\footnote{The terms $O(r)$ and $O(r^2)$ contain a factor $(1 + \frac{\delta A}{1 + A_0 - \delta A})$} with $$ A = \pa{A_0 + \frac{\delta A(1 + A_0)}{1 + A_0 - \delta A}}.$$  According to theorem \ref{theorem Moser diffeo}, $\phi_A(\id, P_A, \lambda) = P^0.$ In particular \[\frac{\partial B}{\partial \delta A}_{|_{G=\id}}= -\id + \frac{A_0}{1+A_0},\] where $A_0$ is close to $0$. Hence, defining
\begin{equation*}
\hat{\psi}:\R\times \A(\normal{T}_{s+\sigma},\T_\C\times\C)\to \G_{s}\times U_{s}(\alpha,A)\times\Lambda,\, (A,Q)\mapsto \hat{\psi}_A(Q):=\phi^{-1}_A(Q) = (G, P, \lambda)
\end{equation*}
in the neighborhood of $(A_0, P^0)$, by the implicit function theorem locally for all $Q$ there exists a unique $\bar A$ such that $B (\bar A, Q) = 0$. It remains to define $\psi(Q)= \hat{\psi}(\bar A, Q)$.
\end{proof}
Whenever the interest lies on the translation of the curve and the dynamics tangential to it, do not care about the "final" $A$ and consider the situation that puts $B=0$. \\ In particular the graph of $\gamma(\teta):= R_0\circ\varphi^{-1}(\teta)$ is translated by $b$ and its dynamics is conjugated to $R_{2\pi\alpha}$, modulo the term $\beta$: \[Q(\teta, \gamma(\teta)) = (\beta + \conj(\teta), b + \gamma (\conj(\teta))).\]
}
\subsection*{Elimination of $\beta$} If $Q$ satisfy a torsion hypothesis, the existence of a translated Diophantine torus can be proved.
\begin{thm}[Translated Diophantine torus]  
 \label{Russmann general}
 Let $\alpha$ be Diophantine. On a neighborhood of $\T^n\times\set{0}\subset \T^n\times\R^n$, let $P^0\in U(\alpha,I + A^0)$ be a diffeomorphism of the form \[P^0(\teta,r) = (\teta + 2\pi\alpha + p_1(\teta)\cdot r + O(r^2), (I + A^0) \cdot r + O(r^2)) ,\] with $I + A^0$ of simple, real non $0$ eigenvalues and such that \[\operatorname{det}\pa{\int_{\T^n} p_1(\teta)\,d\teta}\neq 0.\]
   If $Q$ is close enough to $P^0$ there exists a unique $A$, close to $A^0$, and a unique $(G,P,b)\in\G\times U(\alpha,I + A)\times \R^n$ such that $Q = T_b\circ G\circ P \circ G^{-1}.$
 \end{thm}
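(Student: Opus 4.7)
The strategy mirrors that of Theorem \ref{theorem B=0}: the abstract normal form of Theorem \ref{theorem Moser diffeo} produces a decomposition up to finite-dimensional counter-terms $(\beta,b,B)$, which we eliminate one-by-one via the classical implicit function theorem under appropriate non-degeneracy hypotheses. Theorem \ref{theorem B=0} already kills $B$ by letting $A$ vary; here the torsion assumption $\det \int_{\T^n} p_1(\teta)\,d\teta\neq 0$ will simultaneously kill $\beta$ and free $b$ from the kernel constraint $(A-I)\cdot b=0$.

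Concretely, I would set up the augmented operator
$$\widetilde{\phi}: \G\times\widehat{U}\times\R^n \longrightarrow V,\qquad (G,P,b)\longmapsto T_{(0,b)}\circ G\circ P\circ G^{-1},$$
where $b\in\R^n$ is now \emph{unconstrained} (no kernel condition imposed) and $\widehat{U}$ is as in \S\ref{Russmann theorem appendix}. The claim is that $\widetilde{\phi}$ is a local diffeomorphism at $(\id,P^0,0)$ in the scales of Banach spaces of \S\ref{section normal form operator}--\S\ref{inversion of phi}; granting this, the announced map $Q\mapsto(G,P,b,A)$ is $\widetilde{\phi}^{-1}$ restricted to a neighborhood of $P^0$. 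The proof proceeds via the inverse function theorem \ref{teorema inversa}: the bound on $\widetilde{\phi}''$ is a direct adaptation of Proposition \ref{phi''}, so the crux is inverting $\widetilde{\phi}'$ at the base point with tame estimates.

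The linearized system is again \eqref{linearized system}, but with $\dot\beta=\dot B=0$ and with both $\delta A\in\R^n$ (diagonal variation of the normal spectrum) and $\dot b\in\R^n$ (unconstrained) as new free parameters. Averaging the first equation of \eqref{linearized system} over $\T^n$ gives the finite-dimensional equation
$$\Bigl(\int_{\T^n} p_1(\teta)\,d\teta\Bigr)\cdot\overline{\dot R_0}\;=\;\overline{\dot q_0},$$
which the torsion hypothesis solves uniquely for the constant part $\overline{\dot R_0}\in\R^n$; Lemma \ref{lemma cohomological circle} then produces the zero-mean part of $\dot\varphi$. Averaging the second equation gives $\dot b=-A\cdot\overline{\dot R_0}-\overline{\dot Q_0}$, which is well-defined without any kernel constraint precisely because $\dot b\in\R^n$ is free; Lemma \ref{normal lemma} then solves for the zero-mean part of $\dot R_0$. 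The third equation is handled exactly as in the proof of Theorem \ref{theorem B=0}, using $\delta A$ to absorb the diagonal constant-in-$\teta$ part, the invertibility of this step being guaranteed by $I+A^0\in\Delta^s_n(\R)$.

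The main obstacle is to carry through these two new finite-dimensional inversions with loss-of-analyticity estimates compatible with the Newton-like scheme of Theorem \ref{teorema inversa}. Both new implicit inversions --- via $\overline{p_1}$ and via $A$ --- are \emph{triangular} and controlled by fixed non-singular finite-dimensional linear maps, hence the tame estimates of Proposition \ref{proposition phi' diffeo} carry over with only inessential constant changes. The inverse function theorem in analytic class then yields the claimed unique $C^\infty$ germ $Q\mapsto(G,P,b,A)$, concluding the proof.
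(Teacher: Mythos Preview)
Your approach is correct, but it departs from the paper's proof in a meaningful way. The paper does \emph{not} redo the Newton scheme with a modified operator; instead it takes Theorem~\ref{theorem B=0} as a black box, introduces an external translation parameter $c\in\R^n$ via $Q_c(\theta,r):=Q(\theta,c+r)$, and shows that the resulting finite-dimensional map $c\mapsto\beta(c)$ is a local diffeomorphism (after a preliminary conjugation making $p_1$ constant, one reads off $\beta(c)=p_1\cdot c+O(c^2)$ directly from the Taylor expansion of $P^0_c$, so $\beta'(0)=p_1$). The classical implicit function theorem then furnishes the unique $c$ killing $\beta$. This is exactly the two-step philosophy announced in the introduction: hard analysis once, then eliminate finite-dimensional counter-terms by ordinary IFT. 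Your route, by contrast, bakes the torsion hypothesis into the linearized system itself, using the free constant $\overline{\dot R_0}$ (paid for by the now-unconstrained $\dot b$) to replace the missing $\dot\beta$; this is valid --- the author even alludes to this alternative --- but it forces you to re-verify the tame estimates of Proposition~\ref{proposition phi' diffeo} for the new triangular order and to check that the torsion of $P$ (not just $P^0$) stays uniformly non-degenerate along the iteration. What you gain is directness: no auxiliary parameter $c$, no preliminary straightening of $p_1$. What the paper gains is modularity: Theorem~\ref{theorem Moser diffeo} is proved once, and each subsequent elimination ($B$ via $A$, $\beta$ via $c$) is a soft finite-dimensional argument, isolating cleanly which non-degeneracy hypothesis kills which obstruction.
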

Phrasing the thesis, the graph of $\gamma = R_0\circ\varphi^{-1}$ is a translated torus on which the dynamics is conjugated to $R_{2\pi\alpha}$ by $\varphi$ (remember the form of $G\in\G$ given in \eqref{G}). 
Before proceeding with the proof of Theorem \ref{Russmann general}, let us consider a parameter $c\in B^n_1(0)$ (the unit ball in $\R^n$) and the family of maps defined by $Q_c (\teta, r) := Q (\teta, c + r)$ obtained by translating the action coordinates. Considering the corresponding normal form operators $\phi_c$, the parametrized version of Theorem \ref{theorem Moser diffeo} follows readily.\\
Now, if $Q_c$ is close enough to $P_c^0,$ Theorem \ref{theorem B=0} asserts the existence of $(G_c, P_c, \lambda_c)\in\G\times U(\alpha,A)\times\Lambda(\beta,b)$ such that \[Q_c = T_\lambda\circ G_c\circ P_c\circ G^{-1}_c.\] 
Hence we have a family of tori parametrized by $\tilde{c}= c + \int_{\T^n}\gamma\,\frac{d\teta}{(2\pi)^n}$,  $$Q(\teta, \tilde{c} + \tilde\gamma(\teta)) = (\beta(c) + \conj(\teta), b(c) + \tilde{c} + \tilde\gamma (\conj(\teta))),$$ where $\gamma := R_0\circ\varphi^{-1}$ and $\tilde{\gamma} = \gamma - \int_\T\gamma\,\frac{d\teta}{2\pi}.$
 
 \begin{proof}
Let $\hat{\varphi}$ be the function defined on $\T^n$ taking values in $\Mat_{n}(\R)$ that solves the (matrix of) difference equation \[\hat\varphi(\teta + 2\pi\alpha) - \hat\varphi(\teta) + p_1(\teta) = \int_{\T^n} p_1(\teta)\frac{d\teta}{(2\pi)^n},\] and let $F: (\teta,r)\mapsto (\teta + \hat{\varphi}(\teta)\cdot r,r)$. The diffeomorphism $F$ restricts to the identity at $\normal{T}^n_0$. At the expense of substituting $P^0$ and $Q$ with $F \circ P^0 \circ F^{-1}$ and $F\circ Q\circ F^{-1}$ respectively, we can assume that 
$$P^0(\teta,r)= (\teta + 2\pi\alpha + p_1\cdot r + O(r^2), (I + A^0)\cdot r + O(r^2)),\quad p_1 = \int_{\T^n} p_1(\teta)\frac{d\teta}{(2\pi)^n}.$$ The germs obtained from the initial $P^0$ and $Q$ are close to one another.\\
The proof will follow from Theorem \ref{theorem B=0} and the elimination of the parameter $\beta\in\R^n$ obstructing the rotation conjugacy.\\
In line with the previous reasoning, we want to show that the map $c\mapsto \beta (c)$ is a local diffeomorphism. It suffices to show this for the trivial perturbation $P^0_c$. The Taylor expansion of $P^0_c$ directly gives the normal form. In particular $b(c) = (I+A^0)\cdot c + O(c^2)$, while the map $c \mapsto \beta(c) = p_1\cdot c + O(c^2)$ is such that $\beta(0)=0$ and  $\beta'(0) = p_1$which is invertible by twist hypothesis, thus a local diffeomorphism. Hence, the analogous map for $Q_c$, which is a small $C^1$-perturbation, is a local diffeomorphism too and, together with Theorem \ref{theorem B=0}, there exists a unique $c\in\R^n$ and $A\in\Mat_n(\R)$, such that $(\beta, B)= (0,0).$
 \end{proof}

\begin{rmk} The theorem holds also on $\T^n\times\R^m$, with $m\geq n$, requiring that \[\operatorname{rank}\pa{\int_{\T^n} p_1(\teta)\,d\teta} = n. \] This guarantees that $c\mapsto \beta(c)$ is submersive, but $c$ solving $\beta(c)=0$ would no more be uniquely determined.
\end{rmk}
\begin{rmk} Theorem \ref{Russmann general} generalizes the classic translated curve theorem of R\"ussmann in higher dimension, in the case of normally hyperbolic systems such that $A$ has simple, real, non $0$ eigenvalues, for general perturbations. \\
 We stress the fact that if $P^0$ was of the form $$ P^0(\teta,r)=(\teta + 2\pi\alpha + O(r), I\cdot r + O(r^2)),$$ like in the original frame studied by R\"ussmann, we would need a whole matrix $B\in\Mat_n(\R)$ in order to solve the homological equations, and, having just $n$ characteristic frequencies at our disposal, it is hopeless to completely solve $B=0$ and eliminate the whole obstruction. The torus would not be just translated.  
\end{rmk}

\comment{\subsubsection{A comment about higher dimension}
In dimension higher than $2$, the analogue of R\"ussmann's theorem could not be possible: needing the matrix $B\in\Mat_m(\R)$, $m\geq 2$, to solve the third homological equation and having just $m$ characteristic exponents of $A$ that we may vary as we did in the last sections, it is hopeless to kill the whole $B$. As a consequence, the obtained surface will undergo more than a simple translation. The analogous result is stated as follows.\\

Let $U(\alpha,A)$ be the space of germs of diffeomorphisms along $\normal{T}^n_0\subset \T^n\times\R^m$ of the form \[P(\teta,r)= (\teta + 2\pi\alpha + T(r) + O(r^2), (1 + A)\cdot r + O(r^2)),\] where $A\in\Mat_m(\R)$ is a diagolanizable matrix of real eigenvalues $a_j\neq 0$ for $j=1,\ldots,m$ and $T(r)$ is such that $T(0)=0$ and $T'(r)$ is invertible for all $r\in\R^m$.\\ Let also $\G$ be the space of germs of real analytic isomorphisms of the form $$g(\teta,r) = (\varphi(\teta), R_0(\teta) + R_1(\teta)\cdot r),$$ $\varphi$ being a diffeomorphism of $\T^n$ fixing the origin, $R_0$ and $R_1$ an $\R^m$-valued and $\Mat_m(\R)$-valued functions defined on $\T^n$. \\ Let $\Lambda_{m^2} = \set{\lambda = (0, b + B\cdot r),\, b\in\R^m, B\in\Mat_m(\R)}$, where $B\in\Mat_m(\R)$ has the $(m^2 - m)$ non diagonal entries different from $0$.
\begin{thm}
Let $\alpha$ be Diophantine. If $Q$ is sufficiently close to $P^0\in U(\alpha,A_0)$, there exists a unique $(G,P,\lambda)\in \G\times U(\alpha,A)\times\Lambda_{m^2}$, close to $(\id, P^0, 0)$ such that \[Q = T_\lambda\circ G\circ P \circ G^{-1}.\]
\end{thm}
The proof follows from the generalization in dimension $\geq 2$ of theorem \ref{theorem Moser diffeo} and the elimination of parameters, which are not hard to recover. We just give some guiding remarks.
\begin{itemize}[leftmargin=*]
\item First note that the constant matrix $M$ appearing in the proof of proposition \ref{linear proposition diffeo} gets the form $M=\pa{\mat{\id & 0\\ 0 & \id + B}}$ and that equation \eqref{linear equation diffeo2} splits into three homological equations of the form
\begin{align*}
\dot\varphi(\teta + 2\pi\alpha) - \dot\varphi(\teta) + p_1 \cdot\dot R_0 &= \dot q_0 + \dot\beta \\
\dot R_0(\teta + 2\pi\alpha) - (\id + A)\cdot\dot R_0(\teta) &= \dot Q_0 + \dot b\\
\dot R_1(\teta + 2\pi\alpha)\cdot (\id + A) - (\id + A)\cdot \dot R_1(\teta)  &=
 \dot Q_1 +  \dot B.
\end{align*}
In particular,
\begin{itemize}
\item equation determining $\dot{R}_0$ is readily solved by applying lemma \ref{lemma cohomological circle} component-wise, if $A$ is diagonal. If $A$ is diagonalizable and $P\in\GL_m(\R)$ is such that $P^{-1}A P$ is diagonal, left multiply the equation by $P^{-1}$ and solve it for $\tilde { R}_0 = P^{-1} \dot R_0$ and $\tilde {Q}_0= P^{-1} \dot Q_0$ 
\item equation determining the matrix $\dot R_1$ consists, when $A$ is diagonal of eigenvalues $a_1,\ldots, a_m$, in solving $m$ equations of the form \[(1 + a_j) \pa{\dot R^j_{j}(\teta + 2\pi\alpha) - \dot R^j_{j}(\teta)} = \dot Q^j_{{j}},\quad j=1,\ldots, m\] and $m^2-m$ equations of the form \[(1+ a_j)\dot R^{i}_j(\teta + 2\pi\alpha) - (1 + a_i)\dot R^{i}_{j}(\teta) = \dot Q^i_j(\teta),\quad \forall i\neq j, i,j = 1,\ldots, m.\] If $A$ is diagonalizable, and $P\in\GL_m(\R)$ the transition matrix, left and right multiply the equation by $P^{-1}$ and $P$ respectively, then solve.
\end{itemize}
\item Eventually, the torsion property on $T(r)$ guarantees the elimination of the $\beta$-obstruction to the rotation conjugacy. The analogue of proposition \ref{theorem B=0} holds. 
\end{itemize}

}

\appendix
\section{Inverse function theorem \& regularity of $\phi$}
\label{section implicit}
We state here the implicit function theorem we use to prove Theorem \ref{theorem Moser diffeo} as well as the regularity statements needed to guarantee uniqueness and smoothness of the normal form. These results follow from Féjoz \cite{Fejoz:2010, Fejoz:2015}. Remark that we endowed functional spaces with weighted norms and bounds appearing in propositions \ref{proposition phi' diffeo}-\ref{phi''} may depend on $\abs{x}_s$ (as opposed to the analogue statements in \cite{Fejoz:2010, Fejoz:2015}); for the corresponding proofs taking account of these (slight) differences we send the reader to\cite{Massetti:2015, Massetti:2015b} and the proof or Moser's theorem therein.\smallskip\\
Let $E=(E_s)_{0<s<1}$ and $F=(F_s)_{0<s<1}$ be two decreasing families of Banach spaces with increasing norms $\abs{\cdot}_s$ and let $B^E_s(\sigma)=\set{x\in E : \abs{x}_s<\sigma}$ be the ball of radius $\sigma$ centered at $0$ in $E_s$. \\ On account of composition operators, we additionally endow $F$ with some deformed norms which depend on $x\in B^E_s(s)$ such that \[ \abs{y}_{0,s} = \abs{y}_s \qquad \text{and}\qquad \abs{y}_{\hat{x},s}\leq \abs{y}_{x, s + \abs{x - \hat{x}}_s}.\]
Consider then operators commuting with inclusions $\phi: B^E_{s+\sigma}(\sigma) \to F_s$, with $0<s<s+\sigma<1$, such that $\phi(0) = 0$. \\We then suppose that if $x\in B^E_{s+\sigma}(\sigma)$ then $\phi'(x):E_{s+\sigma}\to F_s$ has a right inverse $\phi'^{-1}(x): F_{s+\sigma}\to E_s$ (for the particular operators $\phi$ of this work, $\phi'$ is both left and right invertible).\\ $\phi$ is supposed to be at least twice differentiable.\\
Let $\tau:=\tau'+\tau''$ and $C:=C'C''$.
\begin{thm}[Inverse function theorem]
Further assume
\begin{align}
\label{bound phi'^-1}
\abs{\inderiv{\phi}(x)\cdot\delta y}_s\, &\leq \frac{C'}{\sigma^{\tau'}}\abs{\delta y}_{x,s+\sigma}\\
\label{bound phi''}
\abs{\phi''(x)\cdot \delta x^{\tensor 2}}_{x,s}\, &\leq \frac{C''}{\sigma^{\tau''}}\abs{\delta x}^2_{s+\sigma},\quad \forall s,\sigma: 0<s<s+\sigma<1
\end{align}
$C'$ and $C''$ depending on $\abs{x}_{s+\sigma}$, $\tau',\tau''\geq 1$. \\
For any $s, \sigma, \eta$ with $\eta<s$ and $\eps\leq \eta \, \frac{\sigma^{2\tau}}{2^{8\tau}C^2}$ ($ C\geq 1,\sigma< 3 C$), $\phi$ has a right inverse $\psi: B^F_{s+\sigma}(\eps)\to B^E_{s}(\eta)$. In other words, $\phi$ is locally surjective: $$ B^{F}_{s+\sigma}(\eps)\subset \phi(B^{E}_{s}(\eta)).$$ 
\label{teorema inversa}
\end{thm}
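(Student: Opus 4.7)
The natural approach is a Newton iteration in the scale of Banach spaces, starting from $x_0 = 0$. The plan is to construct a sequence $(x_n)_{n\geq 0}$ in $E$ by
\[ x_{n+1} = x_n + \phi'^{-1}(x_n)\cdot (y - \phi(x_n)), \]
and to show, by a careful choice of a decreasing sequence of analyticity widths $s_n \searrow s$, that $(x_n)$ is Cauchy in $E_s$ and converges to a solution of $\phi(x)=y$. Concretely, I would fix $s$ and $\sigma$ as in the statement, set $s_0 = s+\sigma$ and define $\sigma_n = \sigma/2^{n+1}$ so that $s_n := s_0 - \sum_{k<n}\sigma_k$ decreases to some $s_\infty \geq s$, and carry out step $n$ of the iteration on $E_{s_n}$.

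The key estimate is the quadratic error bound. Writing $y_n := y - \phi(x_n)$ and using Taylor's formula with integral remainder,
\[ y_{n+1} = -\int_0^1 (1-t)\, \phi''(x_n + t(x_{n+1}-x_n))\cdot (x_{n+1}-x_n)^{\otimes 2}\, dt. \]
Combining hypothesis \eqref{bound phi''} with $|x_{n+1}-x_n|_{s_{n+1}} \leq (C'/\sigma_n^{\tau'})\,|y_n|_{x_n, s_n}$ from \eqref{bound phi'^-1}, together with the comparison $|\cdot|_{x,s+\rho} \geq |\cdot|_{\hat x, s}$ whenever $|x-\hat x|_s \leq \rho$ that controls how the deformed norms shift along the iterates, yields the scheme
\[ |y_{n+1}|_{x_{n+1},s_{n+1}} \leq \frac{C}{\sigma_n^{2\tau}}\, |y_n|_{x_n, s_n}^{\,2}. \]
From this, a standard induction shows that if $|y|_{s_0} \leq \varepsilon$ with $\varepsilon$ small enough, then $|y_n|_{x_n,s_n} \leq \varepsilon_n$ decays like a double-exponential, specifically $\varepsilon_n \leq \varepsilon_0 \cdot 2^{-n}$ after compensating the geometric loss $\sigma_n^{-2\tau} \sim (2^{n+1}/\sigma)^{2\tau}$ by the squaring.

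I would then verify that under the assumption $\varepsilon \leq \eta\,\sigma^{2\tau}/(2^{8\tau}C^2)$, the accumulated displacements satisfy
\[ \sum_{n\geq 0} |x_{n+1}-x_n|_{s_{n+1}} \leq \eta, \]
so that every $x_n$ stays in $B^E_{s_n}(\eta) \subset B^E_{s_n}(s_n)$, which is needed to apply hypotheses \eqref{bound phi'^-1}--\eqref{bound phi''} at each step. Passing to the limit (the sequence is Cauchy in $E_s$ since the $s_n$ stay $\geq s$), one obtains $x \in B^E_s(\eta)$ with $\phi(x) = y$, and sets $\psi(y) := x$.

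\textbf{Main obstacle.} The delicate point is the bookkeeping that matches the geometric loss $\sigma_n^{-2\tau}$ against the quadratic gain from $|y_n|^2$: the induction must simultaneously control (i) the decay of $|y_n|$, (ii) the size $|x_n|_{s_n} \leq \eta$ so that the hypotheses keep applying at the next step, and (iii) the compatibility between the deformed norm $|\cdot|_{x_n,s_n}$ and the plain norm $|\cdot|_{s_n}$, which shifts as $x_n$ moves. The numerical factor $2^{8\tau}$ in the hypothesis of the theorem is exactly tuned to make this triple induction close; the right choice of $\sigma_n$ as a geometric sequence is what produces the $\sigma^{2\tau}$ denominator in the smallness condition on $\varepsilon$. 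I would not dwell on the exact constants beyond verifying that they close as stated, following the presentation of Féjoz \cite{Fejoz:2010, Fejoz:2015}.
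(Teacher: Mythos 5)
Your plan is essentially the paper's own proof: the same quasi-Newton iteration $x_{n+1}=x_n+\phi'^{-1}(x_n)\cdot(y-\phi(x_n))$ from $x_0=0$, the same quadratic error estimate obtained from the Taylor remainder bounded via \eqref{bound phi''} together with \eqref{bound phi'^-1} and the deformed-norm comparison, the same geometric choice of widths $\sigma_n$ summing to (a fraction of) $\sigma$, and the same closing argument that the smallness condition on $\eps$ makes the increments summable below $\eta$ so the limit lies in $B^E_s(\eta)$. The only cosmetic differences are that the paper tracks the increments $\abs{x_{n+1}-x_n}_{s_{n+1}}$ (via the product $\prod_k C_k^{2^{-k}}$ trick) rather than the residuals $\abs{y_n}_{x_n,s_n}$, and loses $3\sigma_n$ of width per step, which is where the precise constant $2^{8\tau}$ comes from; your bookkeeping is equivalent.
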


\comment{
Define
\begin{equation}
Q : B^E_{s+2\sigma}(\sigma) \times B^E_{s+2\sigma}\to F_s,\quad (x,\hat{x})\mapsto \phi(\hat{x}) - \phi(x) - \phi'(x)(\hat{x} - x),
\label{resto Taylor}
\end{equation}  
 the reminder of the Taylor formula.\\

\begin{lemma} 
\label{lemma Q}
For every $x,\hat{x}$ such that $\abs{x-\hat{x}}_s < \sigma$, 
\begin{equation}
\abs{Q(x,\hat{x})}_{x,s}\leq \frac{C''}{2\sigma^2}\,\abs{\hat{x} - x}^2_{{s+\sigma+\abs{\hat{x} - x}}_s}.
\label{stima Q}
\end{equation}
\end{lemma}

\begin{proof}
Let $x_t = (1-t)x + t\hat{x}$, $0\leq t\leq 1$, be the segment joining $x$ to $\hat{x}$. Using Taylor's formula,  \[Q(x,\hat{x})= \int_0^1 (1-t)\phi''(x_t)(\hat{x}-x)^2\, dt,\] hence
\begin{align*}
\abs{Q(x,\hat{x})}_{x,s}&\leq \int_0^1 (1-t)\abs{\phi''(x_t)(\hat{x}-x)^2}_{x,s}\, dt \\
&\leq \int_0^1 (1-t)\abs{\phi''(x_t)(\hat{x}-x)^2}_{x_t,s + \abs{x_t - x}_{s}}\,dt\\
&\leq \int_0^1 (1-t)\,\frac{C''}{\sigma^2} \abs{(\hat{x}-x)}^2_{s +\sigma + \abs{x_t - x}_{s}}\,dt\\
&\leq \frac{C''}{2\sigma^2}\,\abs{\hat{x} - x}^2_{{s+\sigma+\abs{\hat{x} - x}}_s}.
\end{align*}

\end{proof}

\begin{proof}[Proof of theorem \ref{teorema inversa}]
Let $s,\sigma,\eta$, with $\eta<s<1$ be fixed positive real numbers. Let also $y\in B^F_{s+\sigma}(\eps)$, for some $\eps>0$. We define the following map: 
\begin{equation*}
f: B^E_{s+\sigma}(\sigma)\to E_s,\quad x\mapsto x + \inderiv{\phi}(x)(y - \phi(x)).
\end{equation*}
We want to prove that, if $\eps$ is sufficiently small, there exists a sequence defined by induction by
\begin{equation*}
\eqsys{x_0 = 0\\
x_{n+1} = f(x_n) ,}
\end{equation*}
converging towards some point $x\in B^E_s(\eta)$, a preimage of $y$ by $\phi$.\\ Let us introduce two sequences
\begin{itemize}[leftmargin=*]
\item a sequence of positive real numbers $(\sigma_n)_{n\geq 0}$ such that $3\sum_n \sigma_n = \sigma$ be the total width of analyticity we will have lost at the end of the algorithm,\\
\item the decreasing sequence $(s_n)_{n\geq 0}$ defined inductively by $s_0 = s + \sigma$ (the starting width of analyticity), $s_{n+1}= s_n - 3\sigma_n$. Of course, $s_n\to s$ when $n\to + \infty$.
\end{itemize}  
Suppose now the existence of $x_0,...,x_{n+1}$.\\From $x_{k} - x_{k-1} =  \inderiv{\phi}(x_{k-1})(y - \phi(x_{k-1}))$ we see that $y - \phi(x_k) = - Q(x_{k-1},x_k)$, which permits to write $x_{k+1} - x_k = - \inderiv{\phi}(x_k)Q(x_{k-1},x_k)$, for $k= 1,...,n$.\\ Assuming that $\abs{x_{k} - x_{k-1}}_{s_k}\leq \sigma_k$, for $k=1,...n$, from the estimate of the right inverse and the previous lemma we get 

\begin{equation*}
\abs{x_{n+1} - x_n}_{s_{n+1}}\leq \frac{C}{2\sigma_n^{\tau}}\abs{x_n - x_{n-1}}^2_{s_{n}} \leq \ldots \leq C_n C_{n-1}^2\ldots C_1^{2^{n-1}}\abs{x_1 - x_0}^{2^n}_{s_1},
\end{equation*}
with $C_n=\frac{C}{2\sigma_n^{\tau}}$.\\ First, remark that $$\abs{x_1 - x_0}_{s_1}\leq \frac{C'}{(3\sigma_0)^{\tau'}}\,\abs{y-\phi(x_0)}_{s_0}\leq \frac{C}{2\sigma^{\tau}_0}\,\abs{y}_{s+\sigma}\leq \frac{C}{2\sigma^{\tau}_0}\,\eps.$$
Second, observe that if $C_k\geq 1$ (see remark below), $$\abs{x_{n+1}-x_n}_{s_{n+1}}\leq \pa{\eps\prod_{k\geq 0}C_k^{2^{-k}}}^{2^n}.$$
Third, note that $$ \sum_{n\geq 0} z^{2^n} = z + z^2 + z^4 + \ldots \leq z\sum_{n\geq 0} z^n \leq 2z, $$ if $z\leq\frac{1}{2}$.

The key point is to choose $\eps$ such that $\eps \prod_{k\geq 0}C_k^{2^{-k}}\leq \frac{1}{2}$ (or any positive number $< 1$)  and $\sum_{n\geq 0}\abs{x_{n+1} - x_n}_{s_{n+1}}<\eta$, in order for the whole sequence $(x_k)$ to exist and converge in $B_s(\eta)\subset E_s$. Hence, using the definition of the $C_n$'s and the fact that $$\pa{\frac{C}{2}}^{-2^{-k}}=\pa{\frac{2}{C}}^{\pa{\frac{1}{2}}^k}\Longrightarrow\prod  \pa{\frac{2}{C}}^{\pa{\frac{1}{2}}^k} = \pa{\frac{2}{C}}^{\sum\frac{1}{2^k}}=\pa{\frac{2}{C}}^2,$$ within $\sum_k \frac{1}{2^k} = \sum_k k\frac{1}{2^{k}}=2$, we obtain as a sufficient value
\begin{equation}
\eps = \eta \frac{2}{C^2}\prod_{k\geq 0}\sigma_k^{\tau\,(\frac{1}{2})^k}.
\label{bound epsilon}
\end{equation}
Eventually, the constraint $3\sum_{n\geq 0}\sigma_n = \sigma$ gives $\sigma_k= \frac{\sigma}{6}\,\pa{\frac{1}{2}}^k$, which, plugged into \eqref{bound epsilon}, gives: 
\[\eps = \eta \, \frac{2}{C^2}\pa{\frac{\sigma}{12}}^{2\tau} > \frac{\sigma^{2\tau}\eta}{2^{8^{\tau}}C^2},\]
hence the theorem. 

A posteriori, the exponential decay we proved makes straightforward the further assumption $\abs{x_k - x_{k-1}}_{s_k}<\sigma_k$ to apply lemma \ref{lemma Q}.\\ Concerning the bounds over the constant $C$, as $\sum_k \abs{x_{k+1} - x_k}_{s_{k+1}}\leq\eta$, we see that all the $\abs{x_n}_{s_n}$ are bounded, hence the constants $C'$ and $C''$ depending on them. \\Moreover, to have all the $C_n\geq 1$, as we previously supposed, it suffices to assume $C\geq \sigma/3$.
\end{proof}

\subsection{Uniqueness and regularity of the normal form}

\begin{defn}
We will say that a family of norms $(\abs{\cdot}_s)_{s>0}$ on a grading $(E_s)_{s>0}$ is \emph{log-convex} if for every $x\in E_s$ the map $ s\mapsto \log\abs{x}_s $ is convex.
\end{defn}

\begin{lemma} If $(\abs{\,\cdot\,}_s)$ is log-convex, the following inequality holds \[\abs{x}^2_{s+\sigma}\leq \abs{x}_s \abs{x}_{s+\tilde{\sigma}},\quad \forall s,\sigma,\tilde{\sigma}=\sigma(1+\frac{1}{s}).\] 
\end{lemma}

\begin{proof}
If $f: s\mapsto \log\abs{x}_s$ is convex, this inequality holds \[f\pa{\frac{s_1 + s_2}{2}}\leq \frac{f(s_1) + f(s_2)}{2}.\] Let now $x\in E_s$, then
\[\log\abs{x}_{s+\sigma}\leq\log\abs{x}_{\frac{2s + \tilde{\sigma}}{2}}\leq \frac{1}{2}\pa{\log\abs{x}_s + \log\abs{x}_{s+\tilde{\sigma}}}=\frac{1}{2}\log(\abs{x}_s\abs{x}_{s+\tilde{\sigma}}),\]hence the lemma. 
\end{proof}
Let us assume that the family of norms $(\abs{\cdot}_s)_{s>0}$ of the grading $(E_s)_{s>0}$ are log-convex. To prove the uniqueness of $\psi$ we are going to assume that $\phi'$ is also left-invertible. 

}

\begin{prop}[Lipschitz continuity of $\psi$] 
\label{lipschitz}
Let $\sigma<s$. If $y,\hat{y}\in B^{F}_{s+\sigma}(\eps)$ with $\eps = {3^{-4\tau}2^{-16\tau}}\frac{\sigma^{6\tau}}{4C^3}$, the following inequality holds
\[\abs{\psi(y) - \psi(\hat{y})}_s\leq L \abs{y-\hat{y}}_{x,s+\sigma},\] with $L=2C'/\sigma^{\tau'}$. In particular, $\psi$ being the unique local right inverse of $\phi$, it is also its unique left inverse.
\end{prop}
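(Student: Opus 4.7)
The plan is to linearize $\phi$ at $x = \psi(y)$ via Taylor's formula and invert using Proposition \ref{proposition phi' diffeo}. Setting $x = \psi(y)$ and $\hat x = \psi(\hat y)$, Theorem \ref{teorema inversa} places both in a small ball $B^E_s(\eta)$, with $\eta$ controlled by the smallness of $\eps$. From $\phi(x) = y$ and $\phi(\hat x) = \hat y$, Taylor's formula at $x$ yields
$$\hat y - y = \phi(\hat x) - \phi(x) = \phi'(x) \cdot (\hat x - x) + Q(x, \hat x),$$
where $Q(x, \hat x) = \int_0^1 (1-t)\,\phi''(x + t(\hat x - x)) \cdot (\hat x - x)^{\otimes 2}\,dt$ is the quadratic remainder, controlled by Proposition \ref{phi''}.

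Applying the right inverse $\inderiv\phi(x)$ of Proposition \ref{proposition phi' diffeo} to both sides gives the identity
$$\hat x - x = \inderiv\phi(x)(\hat y - y) - \inderiv\phi(x)\, Q(x, \hat x).$$
After partitioning the width $\sigma$ into sub-widths (say, thirds) and propagating the scale losses through both propositions, the first term is bounded by $(C'/\sigma^{\tau'})\,|\hat y - y|_{x, s+\sigma}$ up to harmless multiplicative constants, while the second is a quadratic term of order $(C'C''/\sigma^{\tau})\,|\hat x - x|^2$ evaluated at a slightly higher intermediate width. Since $|\hat x - x|$ at that higher width is bounded by $2\eta$ (a consequence of Theorem \ref{teorema inversa} applied with the appropriate width partition, which is precisely what the smallness $\eps \propto \sigma^{6\tau}$ buys us), the coefficient in front of $|\hat x - x|_s$ in the quadratic term can be made no larger than $1/2$, allowing it to be absorbed into the left-hand side. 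This yields
$$|\hat x - x|_s \leq \frac{2C'}{\sigma^{\tau'}}\, |\hat y - y|_{x, s+\sigma},$$
the claimed Lipschitz estimate with $L = 2C'/\sigma^{\tau'}$.

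For the uniqueness statements, the same Taylor-inversion identity applied to any two preimages $x_1, x_2 \in B^E_s(\eta)$ of the same $y$ gives $x_1 - x_2 = -\inderiv\phi(x_1)\, Q(x_1, x_2)$, whence an inequality of the form $|x_1 - x_2|_s \leq \tfrac{1}{2}|x_1 - x_2|_s$ for $\eta$ small enough, forcing $x_1 = x_2$. Thus preimages in the small ball are unique and $\psi$ is the unique local right inverse. Moreover, for any $x \in B^E_s(\eta)$ both $x$ and $\psi(\phi(x))$ are preimages of $\phi(x)$ in that ball, so $\psi(\phi(x)) = x$; hence $\psi$ is also the unique left inverse.

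The main technical obstacle is the consistent bookkeeping of analyticity widths: Proposition \ref{proposition phi' diffeo}, Proposition \ref{phi''}, and Theorem \ref{teorema inversa} each consume a positive fraction of the single width parameter $\sigma$, and the $\eta$-smallness that enables the quadratic absorption must be propagated back to a smallness condition on $\eps$ via the inverse function theorem's bound. Chaining these losses is what produces the high exponent $6\tau$ in the explicit value of $\eps$.
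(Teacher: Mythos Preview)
Your overall strategy---linearize via Taylor at $x=\psi(y)$, invert with $\phi'^{-1}(x)$, then absorb the quadratic remainder---is the same as the paper's. But the absorption step, as you have written it, does not close. After applying $\phi'^{-1}(x)$ to the Taylor identity, the quadratic term is controlled (via Propositions \ref{proposition phi' diffeo} and \ref{phi''}) by a constant times $\abs{\hat x - x}^2_{s'}$ with $s'$ \emph{strictly larger} than $s$: both $\phi''$ and $\phi'^{-1}$ cost analyticity width, so the square necessarily lives above $s$. Bounding one factor of $\abs{\hat x - x}^2_{s'}$ by $2\eta$ leaves the other factor at width $s'$, not $s$; since the norms are increasing in the width parameter, this cannot be moved to the left and absorbed into $\abs{\hat x - x}_s$. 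Your phrase ``the coefficient in front of $\abs{\hat x - x}_s$ in the quadratic term'' therefore has no justification as stated.

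The paper fills this gap with an \emph{interpolation inequality} coming from log-convexity of the weighted norms: for the appropriate $\tilde\sigma > \sigma'$,
\[
\abs{\hat x - x}^2_{s+\sigma'} \leq \abs{\hat x - x}_s \cdot \abs{\hat x - x}_{s+\tilde\sigma}.
\]
This splits the square so that one factor is exactly $\abs{\hat x - x}_s$ (which \emph{can} be absorbed into the left-hand side) while the other is at a yet higher width where the $2\eta$ bound from Theorem \ref{teorema inversa} applies. The careful choice of an intermediate parameter $\xi$ and of $\eta$ (so that $s+\tilde\sigma \leq s+\sigma-\xi$) is what ultimately produces the exponent $6\tau$ in $\eps$. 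Without this interpolation step---or an equivalent device---the argument does not go through; you should add it explicitly. Once this is in place, your treatment of the uniqueness/left-inverse statement is fine.
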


\comment{
\begin{proof}
In order to get the wanted estimate we introduce an intermediate  parameter $\xi$, that will be chosen later, such tat $\eta<\xi<\sigma<s<s+\sigma$.\\ 
To lighten notations let us call $\psi(y)=: x$ and $\psi(\hat{y})=: \hat{x}$. Let also $\eps=\frac{\xi^{2\tau}\eta}{2^{8\tau}C^2}$ so that if $y,\hat{y}\in B^{F}_{s+\sigma}(\eps)$, $x,\hat{x}\in B^E_{s+\sigma -\xi}(\eta)$, by theorem \ref{teorema inversa}, provided that $\eta< s + \sigma -\xi$ - to check later. In particular, we assume that any $x,\hat{x}\in B^{E}_{s + \sigma -\xi}$ satisfy $\abs{x - \hat{x}}_{s+\sigma - \xi}\leq 2\eta.$
Writing \[(x-\hat{x})=\phi'^{-1}(x)\cdot\phi(x)(x-\hat{x}),\]
and using $$\phi'(x)(x-\hat{x})= \phi(\hat{x})-\phi(\hat{x})-Q(x,\hat{x}),$$ we get \[ x-\hat{x}= \inderiv{\phi}(x)\pa{\phi(\hat{x}) - \phi(x) - Q(x,\hat{x})}.\]
Taking norms we have
 \begin{align*}
\abs{x-\hat{x}}_s  &\leq \frac{C'}{\sigma^{\tau'}}\abs{y - \hat{y}}_{x,s+\sigma} + \frac{C}{2\xi^{\tau}}\abs{x-\hat{x}}^2_{s+2\xi + \abs{x-\hat{x}}_{s + \xi}},\\
&\leq \frac{C'}{\sigma^{\tau'}}\abs{y - \hat{y}}_{x,s+\sigma} + \frac{C}{2\xi^{\tau}}\abs{x-\hat{x}}^2_{s+2\xi + 2\eta},
\end{align*} 
 by lemma \ref{lemma Q} and the fact that $\abs{x-\hat{x}}_{s+\xi}\leq\abs{x-\hat{x}}_{s+\sigma-\xi}$ (choosing $\xi$ so that $2\xi< \sigma $ too). \\ Let us define $\tilde{\sigma}=(2\xi + 2\eta)(1 + 1/s)$ and use the interpolation inequality \[\abs{x-\hat{x}}^2_{s+2\eta +2\xi}\leq\abs{x-\hat{x}}_s\abs{x-\hat{x}}_{s+\tilde{\sigma}}\] to obtain \[(1 - \frac{C}{2\xi^\tau}\abs{x-\hat{x}}_{s+\tilde{\sigma}})\abs{x-\hat{x}}_s\leq \frac{C'}{\sigma^{\tau'}}\abs{y-\hat{y}}_{x,s+\sigma}.\]
We now choose $\eta$ so small to have
\begin{itemize}[leftmargin=*]
\item $\tilde{\sigma}\leq\sigma - \xi$, which implies $\abs{x-\hat{x}}_{s+\tilde{\sigma}}\leq 2\eta$. It suffices to have $\eta\leq\frac{\sigma}{2(1+\frac{1}{s})} - \frac{3}{2}\xi$.
\item $\eta\leq\frac{\xi^{\tau}}{2C}$ in order to have $\frac{C}{2\xi^\tau}\abs{x-\hat{x}}_{s+\sigma}\leq\frac{1}{2}.$
\end{itemize}
A possible choice is $\xi = \frac{\sigma^2}{12}$ and $\eta = \pa{\frac{\sigma}{12}}^{2\tau}\frac{1}{4C},$ hence our choice of $\eps$.
\end{proof}
}

\begin{prop}[Smooth differentiation of $\psi$]
\label{smoothness}
 Let $\sigma<s<s+\sigma$ and $\eps$ as in proposition \ref{lipschitz}. There exists a constant $K$ such that for every $y,\hat{y}\in B_{s+\sigma}^F(\eps)$ we have \[\abs{\psi(\hat{y}) - \psi(y) - \inderiv{\phi}(\psi(y))(\hat{y} - y)}_s\leq K(\sigma) \abs{\hat{y} - y}^2_{x,s+\sigma},\] and the map $\psi': B_{s+\sigma}^F(\eps)\to L(F_{s+\sigma},E_s)$ defined locally by $\psi'(y)=\phi'^{-1}(\psi(y))$  is continuous. In particular $\psi$ has the same degree of smoothness of $\phi$. 
\end{prop}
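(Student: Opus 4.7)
The plan is to imitate the finite-dimensional proof of the inverse function theorem, being careful about the scales of Banach spaces. Set $x=\psi(y)$ and $\hat x=\psi(\hat y)$, so that by definition $\phi(x)=y$ and $\phi(\hat x)=\hat y$. Taylor's formula gives
\[
\hat y - y = \phi'(x)\cdot(\hat x - x) + Q(x,\hat x),
\]
where $Q(x,\hat x)$ is the Taylor remainder already controlled quadratically in the proof of Theorem \ref{teorema inversa}. Applying the right inverse $\inderiv{\phi}(x)$ to both sides and rearranging yields the key identity
\[
\psi(\hat y) - \psi(y) - \inderiv{\phi}(\psi(y))\cdot(\hat y - y) = -\inderiv{\phi}(x)\cdot Q(x,\hat x).
\]
From this, the quadratic estimate of the proposition falls out by chaining the bound \eqref{bound phi'^-1} on $\inderiv{\phi}$, the quadratic bound on $Q$, and the Lipschitz continuity of $\psi$ from Proposition \ref{lipschitz}, which upgrades $\abs{\hat x - x}$ to a multiple of $\abs{\hat y - y}_{x,s+\sigma}$. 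This estimate is exactly Fréchet differentiability of $\psi$ with derivative $\psi'(y)=\inderiv{\phi}(\psi(y))$.

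For continuity of $\psi'$, I would combine continuity of $\inderiv{\phi}$ in its argument with the Lipschitz continuity of $\psi$ just obtained. The former follows from the algebraic identity
\[
\inderiv{\phi}(\hat x) - \inderiv{\phi}(x) = \inderiv{\phi}(\hat x)\cdot\bigl(\phi'(x) - \phi'(\hat x)\bigr)\cdot\inderiv{\phi}(x),
\]
together with Lipschitz continuity of $\phi'$, which in turn is inherited from the bound \eqref{bound phi''} on $\phi''$ via the mean value theorem. For the final claim on $C^k$-smoothness I would argue by induction on $k$: if $\phi\in C^k$, then $\phi'$ is $C^{k-1}$, hence by differentiating the same algebraic identity so is $\inderiv{\phi}$; then $\psi' = \inderiv{\phi}\circ\psi$ is $C^{k-1}$ by the chain rule, and $\psi$ is $C^k$.

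The main obstacle will be the bookkeeping of nested widths. Each application of $\inderiv{\phi}$, of Cauchy's inequality for $\phi'$, and of Proposition \ref{lipschitz} costs a slice of analyticity strip, and one must split $\sigma$ into a few comparable subwidths so that the final inequality still lives on $B^F_{s+\sigma}(\eps)$. This is the same kind of scale-splitting already carried out in the proof of Proposition \ref{lipschitz}, so no genuinely new analytic idea is needed; the constant $K(\sigma)$ that emerges grows polynomially in $1/\sigma$.
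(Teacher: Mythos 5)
Your proposal is correct and follows essentially the same route as the paper: the identity $\psi(\hat y)-\psi(y)-\inderiv{\phi}(\psi(y))\cdot(\hat y-y)=-\inderiv{\phi}(x)\cdot Q(x,\hat x)$, estimated via the bound \eqref{bound phi'^-1}, the quadratic control of the Taylor remainder coming from \eqref{bound phi''}, and the Lipschitz continuity of $\psi$ from Proposition \ref{lipschitz}, with the usual loss-of-width bookkeeping. The only cosmetic difference is that for the regularity of $\psi'$ you use the explicit resolvent identity for $\inderiv{\phi}$, whereas the paper simply invokes the analyticity of inversion of linear operators between Banach spaces; both are legitimate and rely on the two-sided invertibility of $\phi'$ noted in the appendix.
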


\comment{

\begin{proof}
 Let's baptize some terms
\begin{itemize}[leftmargin=*]
\item $\Delta := \psi(\hat{y}) - \psi(y) - \inderiv{\phi}(x)(\hat{y} - y) $
\item $\delta:= \hat{y} - y$, the increment
\item $\xi:= \psi(y + \delta) - \psi(y)$
\item $\Xi:= \phi(x + \xi) - \phi(x)$.
\end{itemize}
With these new notations we can see $\Delta$ as 
\begin{align*}
\Delta &= \xi - \inderiv{\phi}(x)\cdot\Xi\\
&=\inderiv{\phi}(x)(\phi'(x)\cdot\xi - \Xi)\\
& =\inderiv{\phi}(x)(\phi'(x)\xi - \phi(x+\xi) + \phi(x))\\
& = -\inderiv{\phi}(x)Q(x,x+\xi)
\end{align*}
Taking norms we have
\[\abs{\Delta}_s\leq K \abs{\hat{y} - y}^2_{x,s+\bar{\sigma}}\] by proposition \ref{lipschitz} and lemma \ref{lemma Q}, for some $\bar\sigma$ which goes to zero when $\sigma$ does, and some constant $K>0$ depending on $\sigma$ . Up to substituting $\sigma$ for $\bar{\sigma}$, we have proved the statement.\\
In addition \[\psi'(y)= \phi^{-1}(y)'= \phi'^{-1}\circ \phi^{-1}(y)= \phi'^{-1}(\psi(y)),\] the inversion of linear operators between Banach spaces being analytic, the map $y\mapsto \phi'^{-1}(\psi(y))$ has the same degree of smoothness as $\phi'$.
\end{proof}
}

It is sometimes convenient to extend $\psi$ to non-Diophantine characteristic frequencies $(\alpha,A)$. Whitney smoothness guarantees that such an extension exists.
Let suppose that $\phi(x)=\phi_{\nu}(x)$ depends on some parameter $\nu\in B^k$ (the unit ball of $\R^k$) and that it is $C^1$ with respect to $\nu$ and that estimates on $\phi'^{-1}_{\nu}$ and $\phi_{\nu}''$ are uniform with respect to $\nu$ over some closed subset $D$ of $\R^{k}$. 

\begin{prop}[Whitney differentiability]Let us fix $\eps, \sigma, s$ as in proposition \ref{lipschitz}. The map $\psi : D\times B^{F}_{s+\sigma}(\eps)\to B^{E}_s(\eta)$ is $C^1$-Whitney differentiable and extends to a map $\psi : \R^{2n}\times B^{F}_{s+\sigma}(\eps)\to B^{E}_s(\eta) $ of class $C^1$. If $\phi$ is $C^k$, $1\leq k\leq \infty$, with respect to $\nu$, this extension is $C^k$.
\label{Whitney}
\end{prop}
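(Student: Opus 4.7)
The plan is to identify a candidate Fréchet derivative $\partial_\nu \psi$ on the closed set $D$, verify the Whitney Taylor remainder estimate between $\psi$ and this candidate, and then invoke Whitney's extension theorem to obtain the $C^1$ extension; higher regularity is handled by iterating the implicit differentiation argument.

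First, for $\nu \in D$ and $y \in B^F_{s+\sigma}(\eps)$ the identity $\phi_\nu(\psi_\nu(y))=y$ holds by construction. Formal differentiation in $\nu$ suggests the candidate derivative
\[ \partial_\nu\psi(\nu,y) := -\phi'^{-1}_\nu(\psi_\nu(y))\cdot\partial_\nu\phi_\nu(\psi_\nu(y)), \]
viewed as an element of the space of bounded linear maps from $\R^{2n}$ to $E_s$. This expression makes sense on $D$ because $\phi$ is $C^1$ in $\nu$ with values in the $F$-scale, and $\phi'^{-1}_\nu$ is uniformly bounded over $D$ with loss $\sigma^{-\tau'}$ by \eqref{bound phi'^-1}. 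Continuity of $\nu\mapsto\partial_\nu\psi(\nu,y)$ on $D$ follows from the continuity of $\nu\mapsto\psi_\nu(y)$ (a consequence of Proposition \ref{lipschitz} and the $C^1$ dependence in $\nu$) together with the fact that inversion of continuous linear maps is analytic where defined.

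Second, I check Whitney's $C^1$ compatibility condition. Fix $\nu,\hat\nu\in D$ and $y\in B^F_{s+\sigma}(\eps)$, and set $x=\psi_\nu(y)$, $\hat x=\psi_{\hat\nu}(y)$. Since $\phi_{\hat\nu}(\hat x)=\phi_\nu(x)=y$, one has
\[ 0 = \bigl(\phi_{\hat\nu}(\hat x)-\phi_\nu(\hat x)\bigr) + \bigl(\phi_\nu(\hat x)-\phi_\nu(x)\bigr). \]
Taylor-expanding the first summand to first order in $\nu$ at $\nu$ and the second to first order in the $E$-variable at $x$ yields
\[ \partial_\nu\phi_\nu(x)(\hat\nu-\nu)+\phi'_\nu(x)(\hat x-x)+R(\nu,\hat\nu;x,\hat x)=0, \]
where $R$ collects the quadratic Taylor remainders. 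By the assumed uniform bound \eqref{bound phi''} on $\phi''_\nu$ and the analogous uniform bound on the mixed and $\nu$-second derivatives of $\phi$, together with a standard interpolation as in Proposition \ref{lipschitz}, $R$ is bounded in the $|\cdot|_{x,s}$-norm by $C\sigma^{-\tilde\tau}(|\hat\nu-\nu|^2+|\hat x-x|^2_{s+\sigma})$. Applying $\phi'^{-1}_\nu(x)$, inserting the definition of $\partial_\nu\psi$, and using the Lipschitz bound $|\hat x-x|_{s+\sigma}\leq L|\hat\nu-\nu|$ (which follows from the same computation, exactly as in Proposition \ref{lipschitz}) one obtains the quadratic Whitney remainder estimate
\[ \bigl|\psi_{\hat\nu}(y)-\psi_\nu(y)-\partial_\nu\psi(\nu,y)(\hat\nu-\nu)\bigr|_s\leq K|\hat\nu-\nu|^2, \]
uniformly for $y\in B^F_{s+\sigma}(\eps)$.

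Third, Whitney's extension theorem, applied to the Banach-space-valued jet $(\psi,\partial_\nu\psi)$ defined on the closed subset $D\subset\R^{2n}$, produces a $C^1$ extension of $\psi$ to $\R^{2n}\times B^F_{s+\sigma}(\eps)$. For the $C^k$ case with $\phi$ of class $C^k$ in $\nu$, the scheme iterates: repeated implicit differentiation of $\phi_\nu(\psi_\nu(y))=y$ yields candidate higher derivatives $\partial_\nu^j\psi$, $j\leq k$, as polynomial expressions in $\phi'^{-1}_\nu$, $\partial_\nu^i\phi_\nu$ and the lower order $\partial_\nu^i\psi$, each enjoying uniform bounds on $D$; Whitney compatibility at each order follows from the analogous Taylor expansion. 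The main obstacle throughout is the bookkeeping of analyticity losses: the factors $\sigma^{-\tau'}$ and $\sigma^{-\tau''}$ from \eqref{bound phi'^-1}--\eqref{bound phi''}, combined with the losses introduced by each implicit differentiation, must be absorbed into a uniform choice of scales (as was already done in the choice of $\eps$ in Proposition \ref{lipschitz}), so that the quadratic remainders genuinely dominate the linear terms in a fixed strip of analyticity. Once this balance is arranged, the argument reduces to the standard combination of implicit differentiation with the Whitney extension theorem.
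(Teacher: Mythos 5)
Your proposal follows essentially the same route as the paper's proof: you define the candidate derivative $\partial_\nu\psi = -\phi'^{-1}_\nu(\psi_\nu(y))\cdot\partial_\nu\phi_\nu(\psi_\nu(y))$ by implicit differentiation, obtain Lipschitz dependence of $\psi_\nu(y)$ on $\nu$ from Proposition \ref{lipschitz}, verify the Whitney $C^1$ remainder estimate by a Taylor expansion of $\phi$ controlled by the uniform bounds \eqref{bound phi'^-1}--\eqref{bound phi''}, and conclude with the Banach-space-valued Whitney extension theorem (the extension direction being finite dimensional), iterating for the $C^k$ case. This matches the paper's argument, so the proof is correct as proposed.
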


\comment{

\begin{proof}
Let $y\in B^{F}_{s+\sigma}(\eps)$. For $\nu,\nu + \mu\in D$, let $x_\nu = \psi_{\nu}(y)$ and $x_{\nu + \mu}=\psi_{\nu+\mu}(y)$, implying \[\phi_{\nu+\mu}(x_{\nu+\mu}) - \phi_{\nu+\mu}(x_{\nu}) = \phi_{\nu}(x_\nu) - \phi_{\nu+\mu}(x_{\nu}).\] It then follows, since $y\mapsto\psi_{\nu+\mu}(y)$ is Lipschitz, that \[\abs{x_{\nu+\mu} - x_{\nu}}_s\leq L \abs{\phi_{\nu}(x_{\nu}) - \phi_{\nu+\mu}(x_{\nu})}_{x_{\nu},s+\sigma},\] taking ${y}=\phi_{\nu +\mu}(x_{\nu}),\hat{y}=\phi_{\nu+\mu}(x_{\nu+\mu}).$ In particular since $\nu\mapsto\phi_{\nu}(x_{\nu})$ is Lipschitz, the same is for $\nu\mapsto x_{\nu}.$
Let us now expand $\phi_{\nu+\mu}(x_{\nu+\mu})=\phi(\nu+\mu, x_{\nu+\mu})$ in Taylor at $(\nu,x_{\nu}).$ We have 
\[\phi(\nu+\mu,x_{\nu+\mu}) = \phi(\nu,x_{\nu}) + D\phi(\nu,x_{\nu})\cdot (\mu, x_{\nu+\mu} - x_{\nu}) + O(\mu^2, \abs{x_{\nu+\mu} - x_{\nu}}_s^2),\] hence formally defining the derivative $\partial_{\nu} x_{\nu} := - \phi'^{-1}_{\nu}(x_{\nu})\cdot\partial_{\nu} \phi_{\nu}(x_{\nu}),$ we obtain \[x_{\nu+\mu} - x_{\nu} - \partial_{\nu} x_{\nu}\cdot\mu = \phi'^{-1}_{\nu}(x_{\nu})\cdot O(\mu^2),\] hence \[\abs{x_{\nu+\mu} - x_{\nu} - \partial_{\nu} x_{\nu}\cdot\mu}_s = O(\mu^2)\] by Lipschitz property of $\nu\mapsto x_{\nu},$ when $\mu\mapsto 0$, locally uniformly with respect to $\nu$. Hence $\nu\mapsto x_{\nu}$ is $C^1$-Whitney-smooth and by Whitney extension theorem, the claimed extension exists. Similarily if $\phi$ is $C^k$ with respect to $\nu$, $\nu\mapsto x_{\nu}$ is $C^k$-Whitney-smooth. Ssee \cite{Abraham-Robbin:1967} for the straightforward generalization of Whitney's theorem to the case of interest to us: $\psi$ takes values in a Banach space instead of a finite dimension vector space; but note that the extension direction is of finite dimension though.
\end{proof}

}

\section{Inversion of a holomorphism of $ \T^n_s $ }
We present here a classical result and a lemma that justify definition of the normal form operator $\phi$ defined in section \ref{section operator}.\\
Complex extensions of manifolds are defined at the help of the $\ell^\infty$-norm. \\Let \[\T^n_{\C}= \C^n/{2\pi\Z^n}\qquad \text{and} \qquad \text{T}^n_\C = \T^n_{\C}\times\C^m,\]
 \[\T^n_s = \set{\teta\in\T^n_\C : \abs{\teta}:= \max_{1\leq j\leq n}\abs{\Im{\teta_j}}\leq s},\quad \text{T}^n_s=\left\{\pa{\teta,r}\in\text{T}^n_{\C}:\, \abs{(\Im\teta,r)}\leq s\right\},\]
where $\abs{(\Im\teta,r)}:= \max_{1\leq j\leq n}\max(\abs{\Im\teta_j},\abs{r_j})$.\\
Let also define $\R^n_s := \R^n \times (-s,s)$ and consider the universal covering of $\T^n_s$, $p: \R^n_s\to \T^n_s$.
\begin{thm}
\label{theorem well def} Let $v:\T^n_s\to\C^n$ be a vector field such that $\abs{v}_s < \sigma/n$. The map $\id + v:\T^n_{s-\sigma}\to\R^n_{s}$ induces a map $\varphi = \id + v : \T^n_{s-\sigma}\to\T^n_s$ which is a biholomorphism and there is a unique biholomorphism $\psi : \T^n_{s-2\sigma}\to\T^n_{s-\sigma}$ such that $\varphi\circ\psi = \id_{\T^n_{s - 2\sigma}}.$ \\
In particular the following hold: \[\abs{\psi - \id}_{s-2\sigma}\leq \abs{v}_{s-\sigma}\] and, if $\abs{v}_s<\sigma/2n$ \[\abs{\psi' - \id}_{s-2\sigma}\leq \frac{2}{\sigma}\abs{v}_{s}.\]
\end{thm}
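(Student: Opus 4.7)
The plan is to construct $\psi$ as the unique fixed point of the Picard operator associated with the equation $\varphi\circ\psi=\id$, which in components reads $\psi(\teta)=\teta-v(\psi(\teta))$. Before that, I would dispose of the well-posedness of $\varphi$ itself: if $\teta\in\T^n_{s-\sigma}$, then the coordinatewise bound on the imaginary part gives $\abs{\Im(\teta+v(\teta))}\leq(s-\sigma)+\abs{v}_s<s-\sigma+\sigma/n\leq s$, so $\varphi$ indeed takes $\T^n_{s-\sigma}$ into $\T^n_s$. Since the $2\pi\Z^n$-periodicity is preserved by $\id+v$, descending from the universal cover is routine.

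To build $\psi$ I would define the operator $T(\psi)(\teta)=\teta-v(\psi(\teta))$ on the closed set $\mathcal{E}=\set{\psi\in\A(\T^n_{s-2\sigma},\T^n_{\C}) : \abs{\psi-\id}_{s-2\sigma}\leq\abs{v}_{s-\sigma}}$. Any $\psi\in\mathcal{E}$ satisfies $\abs{\Im\psi(\teta)}\leq (s-2\sigma)+\abs{v}_{s-\sigma}\leq s-\sigma$, so $\psi$ maps $\T^n_{s-2\sigma}$ into $\T^n_{s-\sigma}$ and $v\circ\psi$ is well defined. The stability $T(\mathcal{E})\subset\mathcal{E}$ follows from $\abs{T(\psi)-\id}_{s-2\sigma}=\abs{v\circ\psi}_{s-2\sigma}\leq\abs{v}_{s-\sigma}$. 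For the contraction property, apply Cauchy's inequality componentwise to bound $\abs{v'}_{s-\sigma}\leq\abs{v}_s/\sigma<1/n$, giving
\[
\abs{T(\psi_1)-T(\psi_2)}_{s-2\sigma}=\abs{v\circ\psi_1-v\circ\psi_2}_{s-2\sigma}\leq n\,\abs{v'}_{s-\sigma}\,\abs{\psi_1-\psi_2}_{s-2\sigma}<\abs{\psi_1-\psi_2}_{s-2\sigma},
\]
where the factor $n$ accounts for the fact that matrix products pick up the dimension when measured in the componentwise $\max$ weighted norm. By Banach's fixed point theorem, a unique $\psi\in\mathcal{E}$ satisfies $T(\psi)=\psi$, that is $\varphi\circ\psi=\id$. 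The very membership in $\mathcal{E}$ yields the first claimed estimate $\abs{\psi-\id}_{s-2\sigma}\leq\abs{v}_{s-\sigma}$.

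To check that $\psi$ is a genuine biholomorphism onto $\T^n_{s-\sigma}$, I would use the inverse function theorem at each point: under the stronger hypothesis $\abs{v}_s<\sigma/2n$, Cauchy gives $\abs{v'}_{s-\sigma}\leq\abs{v}_s/\sigma<1/2n$, so $I+v'(\teta)$ is invertible pointwise by Neumann series, and $\id+v$ is a local biholomorphism. The fixed point $\psi$ then provides a global holomorphic right inverse on $\T^n_{s-2\sigma}$; uniqueness of the germ of local inverse at each point forces $\psi$ to be the two-sided inverse on its image.

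For the derivative estimate, differentiate $\varphi\circ\psi=\id$ to obtain $\psi'(\teta)=(I+v'(\psi(\teta)))^{-1}$, whence
\[
\psi'(\teta)-I=-(I+v'(\psi(\teta)))^{-1}\cdot v'(\psi(\teta)).
\]
Expanding by Neumann series and using $\abs{v'(\psi(\cdot))}_{s-2\sigma}\leq\abs{v'}_{s-\sigma}\leq\abs{v}_s/\sigma$ together with submultiplicativity of the weighted norm (with the $n$-factor absorbed thanks to $\abs{v}_s<\sigma/2n$), I would obtain
\[
\abs{\psi'-\id}_{s-2\sigma}\leq \frac{\abs{v'}_{s-\sigma}}{1-\abs{v'}_{s-\sigma}\cdot(\text{geometric factor})}\leq\frac{2}{\sigma}\abs{v}_s.
\]
The principal technical obstacle is bookkeeping the dimensional constants throughout the matrix manipulations so that the stated bounds come out with the correct prefactors; the analytic content (Picard iteration plus Cauchy plus Neumann) is elementary.
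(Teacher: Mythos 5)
Your construction of $\psi$ as the fixed point of $T(\psi)=\id-v\circ\psi$ in the ball $\mathcal{E}$ of a Banach space of holomorphic maps is sound, and it reproduces the paper's estimates: the paper runs the same contraction pointwise on the universal cover (the map $x\mapsto y-v\circ p(x)$), proves injectivity of the lift by the same Cauchy estimate, and then obtains holomorphy of the inverse by quoting the fact that an injective holomorphic map of a domain of $\C^n$ is biholomorphic onto its image, whereas your function-space version gets holomorphy of $\psi$ for free. The estimates you invoke are the right ones: $\abs{v\circ\psi}_{s-2\sigma}\leq\abs{v}_{s-2\sigma+\abs{\psi-\id}_{s-2\sigma}}\leq\abs{v}_{s-\sigma}$ for $\psi\in\mathcal{E}$ (this composition inequality for the weighted norm deserves a line of proof), the factor $n$ in matrix--vector products measured in the componentwise max norm, and, for the derivative, the entrywise Neumann series with the dimension tracked as $\tfrac{x}{1-nx}\leq 2x$ for $x=\max_{i,j}\abs{\partial_j v_i}_{s-\sigma}<\tfrac{1}{2n}$, which is exactly how the clean bound $\tfrac{2}{\sigma}\abs{v}_s$ emerges.

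The genuine gap is in the biholomorphism and uniqueness claims. Banach's theorem gives uniqueness of $\psi$ only inside $\mathcal{E}$, while the statement asserts uniqueness among \emph{all} biholomorphisms $\T^n_{s-2\sigma}\to\T^n_{s-\sigma}$ with $\varphi\circ\psi=\id$; and the assertion that $\varphi$ itself is a biholomorphism requires global injectivity on $\T^n_{s-\sigma}$. Your appeal to the pointwise inverse function theorem and to "uniqueness of the germ of local inverse" yields only local injectivity, and a locally invertible map need not be injective, so as written neither claim is established. The missing step is precisely the one you dismissed as routine: on the cover, $\abs{\hat\varphi(x)-\hat\varphi(x')}\geq\bigl(1-\tfrac{n}{\sigma}\abs{v}_s\bigr)\abs{x-x'}>0$ (mean value plus Cauchy) gives injectivity of the lift $\hat\varphi=\id+v\circ p$, and the periodicity relation $\hat\varphi(x+2\pi k)=\hat\varphi(x)+2\pi k$, $k\in\Z^n$, is what lets you descend injectivity to $\T^n_{s-\sigma}$; uniqueness of $\psi$ then follows since any right inverse takes values in the set where $\varphi$ is injective. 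Note also that you invoke the stronger hypothesis $\abs{v}_s<\sigma/2n$ to invert $I+v'(\teta)$, but that hypothesis is granted only for the derivative estimate; in the main part of the statement only $\abs{v}_s<\sigma/n$ is available --- fortunately $n\abs{v'}_{s-\sigma}\leq n\abs{v}_s/\sigma<1$ already suffices there, so this is a misplacement rather than an obstruction.
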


For the proof we send again to \cite{Massetti:2015, Massetti:2015b}. 
\comment{
\begin{proof}
Let $\hat\varphi:= \id + v\circ p : \R^n_s\to\R^n_{s+\sigma}$ be the lift of $\varphi$ to $\R^n_s$.\\Let's start proving the injectivity and surjectivity of $\hat\varphi$; the same properties for $\varphi$ descend from these.
\begin{itemize}[leftmargin=*]
\item $\hat\varphi$ is injective as a map from $\R^n_{s-\sigma}\to\R^n_s$.\\Let $\hat\varphi(x)=\hat\varphi(x')$, from the definition of $\hat\varphi$ we have 
\begin{align*}
\abs{x -x'}=\abs{v\circ p(x') - v\circ p(x)} &\leq \int_0^1 \sum_{k=1}^n\abs{\partial_{x_k} \hat{v}}_{s-\sigma}\abs{x_k'-x_k}\,dt\leq \frac{n}{\sigma}\abs{v}_s\abs{x-x'}\\ & < \abs{x - x'},
\end{align*}
hence $x'=x$.
\item $\hat\varphi: \R^n_{s-\sigma}\to \R^n_{s-2\sigma}\subset \hat\varphi (\R^n_{s-\sigma})$ is surjective.\\ Define, for every $y\in\R^n_{s-2\sigma}$ the map \[f: \R^n_{s-\sigma}\to\R^n_{s-\sigma},\, x\mapsto y - v\circ p(x),\] which is a contraction (see the last but one inequality of the previous step). Hence there exists a unique fixed point such that $\hat{\varphi}(x) = x + v\circ p(x) = y.$ 
\end{itemize}
For every $k\in 2\pi\Z^n$, the function $\R^n_s\to\R^n_s,\, x\mapsto \hat\varphi(x+k) - \hat\varphi(x)$ is continuous and $2\pi\Z^n$-valued. In particular there exists $A\in\GL_n(\Z)$ such that $\hat\varphi(x + k)= \hat\varphi(x) + Ak$.
\begin{itemize}[leftmargin=*]
\item $\varphi: \T^n_{s-\sigma}\to\T^n_s$ is injective.\\ 
Let $\varphi(p(x))=\varphi(p(x')),$ with $p(x),p(x')\in\T^n_{s-\sigma}$, hence $\hat\varphi(x')=\hat\varphi(x) + k' = \hat\varphi(x + k'),$ for some $k'\in 2\pi\Z^n$, hence $x'=x + k'$, for the injectivity of $\hat\varphi$, thus $p(x)=p(x')$. In particular $\varphi$ is biholomorphic:
\begin{lemma}[\cite{Fritzsche-Grauert}] If $G\subset\C^n$ is a domain and $f: G\to\C^n$ injective and holomorphic, then $f(G)$ is a domain and $f: G\to f(G)$ is biholomorphic.
\end{lemma}
\item That $\varphi: \T^n_{s-\sigma}\to \T^n_{s-2\sigma}\subset \varphi(\T^n_{s-\sigma})$ is surjective follows from the one of $\hat\varphi$.
\item Estimate for $\psi: \T^n_{s-2\sigma}\to \T^n_{s-\sigma}$ the inverse of $\varphi$.\\ Let $\hat\psi : \R^n_{s-2\sigma}\to\R^n_{s-\sigma}$ be the inverse of $\hat\varphi$, and $y\in\R^n_{s-2\sigma}.$ From the definition of $\hat\varphi$, $v\circ p(\hat\psi(y))= y - p(\hat\psi(y)) = y - \hat\psi(y)$. Hence \[\abs{\hat\psi(y) - y}_{s-2\sigma} = \abs{v\circ p (\hat \psi (y))}_{s-2\sigma}\leq \abs{v}_{s-2\sigma}\leq\abs{v}_{s-\sigma}.\]
\item Estimate for $\psi' = \varphi'^{-1}\circ\varphi^{-1}$. We have \[\abs{\psi' - \id}_{s-2\sigma} \leq \abs{\varphi'^{-1} - \id}_{s-\sigma}\leq \frac{\abs{\varphi' - \id}_{s-\sigma}}{1 - \abs{\varphi' - \id}_{s-\sigma}}\leq \frac{2n}{2n -1}\frac{\abs{v}_s}{\sigma} \leq 2\frac{\abs{v}_s}{\sigma},\] by triangular and Cauchy inequalities.
\end{itemize}
\end{proof}

}

\begin{cor}[Well definition of the normal form operator $\phi$]
\label{cor well def}For all $s,\sigma$  if $G\in\G^{\sigma/n}_{s + \sigma}$, then $G^{-1}\in\A(\normal{T}^n_s,\normal{T}^n_{s+\sigma})$. 
\end{cor}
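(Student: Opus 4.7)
The plan is to invert $G$ component-by-component, exploiting its lower-triangular structure
\[
G(\teta, r) \;=\; \bigl(\varphi(\teta),\; R_0(\teta) + R_1(\teta)\cdot r\bigr).
\]
The hypothesis $G \in \G^{\sigma/n}_{s+\sigma}$ gives both $\abs{\varphi - \id}_{s+\sigma} \leq \sigma/n$ and $\abs{R_0 + (R_1 - \id)\cdot r}_{s+\sigma} \leq \sigma/n$.

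First I would invert the base factor $\varphi$ by applying Theorem~\ref{theorem well def} directly to the vector field $v = \varphi - \id$ on $\T^n_{s+\sigma}$. Since $\abs{v}_{s+\sigma} \leq \sigma/n$, that theorem (perhaps after the routine trick of splitting the analyticity loss $\sigma$ into two halves, to reconcile the $<$ of Theorem~\ref{theorem well def} with the $\leq$ in the definition of $\G^{\sigma/n}_{s+\sigma}$) produces a biholomorphism $\varphi^{-1}:\T^n_s \to \T^n_{s+\sigma}$ together with the displacement estimate $\abs{\varphi^{-1} - \id}_s \leq \abs{v}_{s+\sigma} \leq \sigma/n$.

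Second, for $(\teta_0, r_0) \in \normal{T}^n_s$, I would set $\teta := \varphi^{-1}(\teta_0) \in \T^n_{s+\sigma}$ and solve the remaining, affine-in-$r$, equation $R_0(\teta) + R_1(\teta)\cdot r = r_0$. Evaluating the hypothesis at $r=0$ yields $\abs{R_0}_{s+\sigma} \leq \sigma/n$, while extracting the first-order $r$-coefficient by a Cauchy-type inequality gives $\abs{R_1 - \id}_{s+\sigma} \leq \sigma/\bigl(n(s+\sigma)\bigr)$. These two bounds make the matrix $R_1(\teta)$ invertible by a Neumann series, so one may take $r := R_1(\teta)^{-1}\bigl(r_0 - R_0(\teta)\bigr)$, which is holomorphic in $(\teta_0, r_0)$.

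Finally one must check that $(\teta, r) = G^{-1}(\teta_0, r_0)$ lies in $\normal{T}^n_{s+\sigma}$. The bound $\abs{\Im \teta} \leq s + \sigma/n < s + \sigma$ follows from the first step. For the fibre, combining the estimates above gives $\abs{r} \leq (1 - \sigma/n)^{-1}(s + \sigma/n) < s + \sigma$ under the standing assumption $\sigma < s < 1$ imposed throughout the paper. Hence $G^{-1} \in \A(\normal{T}^n_s, \normal{T}^n_{s+\sigma})$, as desired. The genuine analytic content is entirely absorbed by Theorem~\ref{theorem well def}; the only \emph{mild} obstacle is bookkeeping of constants, handled by substituting $(s,\sigma)$ by $(s,\sigma/2)$ (or similar) wherever strict inequalities must be enforced.
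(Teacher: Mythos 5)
Your proposal is correct and follows essentially the same route as the paper: the paper also inverts $G$ through its explicit triangular formula $G^{-1}(\teta,r)=\bigl(\varphi^{-1}(\teta),\,R_1^{-1}\circ\varphi^{-1}(\teta)\cdot(r-R_0\circ\varphi^{-1}(\teta))\bigr)$, appeals to Theorem~\ref{theorem well def} to invert $\varphi$, and absorbs the remaining constant bookkeeping by rescaling the norms by a factor $1/2$, exactly in the spirit of your $(s,\sigma)\mapsto(s,\sigma/2)$ remark. The details you supply (the Cauchy-type bound on $R_1-\id$ and the Neumann-series inversion of $R_1$) are steps the paper leaves implicit, so no further comment is needed.
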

\begin{proof}
We recall the form of $G\in\G^{\sigma/n}_{s+\sigma}$: \[G(\teta,r)= (\varphi(\teta), R_0(\teta) + R_1(\teta)\cdot r).\]
$G^{-1}$ reads \[G^{-1}(\teta,r)=(\varphi^{-1}(\teta), R_1^{-1}\circ\varphi^{-1}(\teta)\cdot(r - R_0\circ\varphi(\teta))).\]
Up to rescaling norms by a factor $1/2$ like $\norm{x}_s := \frac{1}{2}\abs{x}$, the statement is straightforward and follows from theorem \ref{theorem well def}. By abuse of notations, we keep on indicating $\norm{x}_s$ with $\abs{x}_s$.
\end{proof}


\comment{
To deduce R\"ussmann's theorem from theorem \ref{theorem Moser diffeo} we need to get rid of the counter-terms $\beta$ and $B$.

Let $V$ be the space of germs of real analytic diffeomorphisms along $\T\times\set{0}\subset\T^n\times\R^m$ and $U(\alpha,A)$ its affine subspace of germs along $\normal{T}_0$ of real analytic diffeomorphism of the form \eqref{P^0}.\\
We introduce the set of germs of real analytic isomorphisms: 
\begin{equation}
\G = \set{G : \T\times\R\to\T\times\R: G(\teta,r) = (\varphi(\teta), R_0(\teta) + R_1\cdot r) },
\end{equation}
 $\varphi$ being a diffeomorphism of the torus fixing the origin and $R_0, R_1$ real valued functions defined on $\T$.\\ 
Finally we consider the translation function 
\begin{equation}
T_\lambda: \T\times\R\to\T\times\R,\quad (\teta,r)\mapsto (\beta + \teta, b + (1 + B)\cdot r) = (\teta,r) + \lambda,
\end{equation}
having denoted $\lambda = (\beta, b + B\cdot r)$. \\
We denote $\Lambda$ the space of translations $\Lambda = \set{\lambda = (\beta, b + B r),\, \beta,b,B\in\R}$.
\begin{thm}
\label{theorem Moser diffeo}
Let $\alpha$ be Diophantine and $P^0\in U(\alpha,A)$ be given. If $Q$ is sufficiently close to $P^0$, there exist a unique $(G, P, \lambda)\in\G\times U(\alpha,A)\times\Lambda$, close to $(\id, P^0,0)$, such that $$ Q = T_{\lambda}\circ G\circ P \circ G^{-1}.$$
\end{thm}
Whenever $\beta = 0 = B$, the curve $(\Theta, \gamma(\Theta))$, $\gamma = R_0\circ\varphi^{-1}$, is translated by $b\in\R$ and the translated curve's dynamics is conjugated to the rotation $R_{2\pi\alpha}$. R\"ussmann's theorem turns out to be a direct consequence (cf. section \ref{Russmann theorem appendix}).

The case of our interest will be when $A$ is close to $0$, as whenever the normal hyperbolicity gets large with respect to the perturbation, one can prove the actual persistence of $\normal{T}_0$ via the method of the graph transform.
\subsection{Outline of the proof}
We will show that the operator 
\begin{equation*}
\phi: \G\times U(\alpha,A)\times \Lambda \to V,\quad (G, P, \lambda)\to  T_{\lambda}\circ G\circ P \circ G^{-1}
\end{equation*} is a local diffeomorphism in a neighborhood of $(\id, P^0, 0)$. \\ Although the difficulty to overcome in this proof is rather standard for conjugacy problems of this kind (proving the fast convergence of a Newton-like scheme), it relies on a relatively general inverse function theorem (theorem \ref{teorema inversa} of section \ref{section implicit}), following an alternative strategy with respect to the one proposed by Zehnder in \cite{Zehnder:1975}. However, both Zehnder's approach and ours rely on the fact that the fast convergence of the Newton' scheme is somewhat independent of the internal structure of the variables. \\ Let us proceed with introducing the functional setting.
\subsection{Complex extensions}
Let us extend the tori \[\T=\R/{2\pi\Z}\qquad \text{and}\qquad \normal{T}_0=\T\times\set{0}\subset\T\times\R,\] as \[\T_{\C}= \C/{2\pi\Z}\qquad \text{and} \qquad \text{T}_\C = \T_{\C}\times\C\] respectively, and consider the corresponding $s$-neighborhoods defined using $\ell^\infty$-balls (in the real normal bundle of the torus): \[\T_s=\set{\teta\in\T_{\C}:\, \abs{\Im\teta}\leq s}\quad\text{and}\quad \text{T}_s=\set{\pa{\teta,r}\in\text{T}^n_{\C}:\, \abs{(\Im\teta,r)}\leq s},\]
where $\abs{(\Im\teta,r)}:= \max(\abs{\Im\teta},\abs{r})$. \\

Let now $f: \normal{T}_s\to \C$ be holomorphic, and consider its Fourier expansion $f(\teta,r)=\sum_{k\in\Z}\,f_k(r)\,e^{i\,k\cdot\teta}$. In this context we introduce the so called "weighted norm": \[\abs{f}_s := \sum_{k\in\Z}\abs{f_k}\, e^{\abs{k}s},\] $\abs{f_k}=\sup_{\abs{r}<s} \abs{f_k(r)}$. Whenever $f : \normal{T}_s\to\C^{n}$, $\abs{f}_s = \max_{1\leq j\leq n}(\abs{f_j}_s)$, $f_j$ being the $j$-th component of $f(\teta,r)$.\\ It is a trivial fact that the classical sup-norm is bounded from above by the weighted norm: \[\sup_{z\in{\normal{T}_s}}\abs{f(z)}\leq\abs{f}_s\] and that $\abs{f}_s<+\infty$ whenever $f$ is analytic on its domain, which necessarily contains some $\normal{T}_{s'}$ with $s'>s$. In addition, the following useful inequalities hold if $f,g$ are analytic on $\normal{T}_{s'}$ \[\abs{f}_s\leq\abs{f}_{s'}\,\text{ for }\, 0<s<s',\] and \[\abs{fg}_{s'}\leq \abs{f}_{s'}\abs{g}_{s'}.\] For more details about the weighted norm, see for example \cite{Meyer:1975}.\\
In general for complex extensions $U_s$ and $V_{s'}$ of $\T\times\R$, we will denote $\mathcal{A}(U_s,V_{s'})$ the set of holomorphic functions from $U_s$ to $V_{s'}$ and $\mathcal{A}(U_s)$, endowed with the $s$-weighted norm, the Banach space $\mathcal{A}(U_s,\C)$.

Eventually, let $E$ and $F$ be two Banach spaces,

\begin{itemize}[leftmargin=*]
\item We indicate contractions with a dot "$\,\cdot\,$", with the convention that if $l_1,\ldots, l_{k+p}\in E^\ast$ and $x_1,\ldots, x_p\in E$ 
\begin{equation*}
(l_1\tensor\ldots\tensor l_{k+p})\cdot (x_1\tensor\ldots\tensor x_p) = l_1\tensor\ldots \tensor l_k \gen{l_{k+1},x_1}\ldots \gen{l_{k+p},x_p}.
\end{equation*}
In particular, if $l\in E^\ast$, we simply note $l^n= l\tensor\ldots\tensor l$.\\
\item If $f$ is a differentiable map between two open sets of $E$ and $F$, $f'(x)$ is considered as a linear map belonging to $F\tensor E^{\ast}$,  $f'(x): \zeta\mapsto f'(x)\cdot\zeta$; the corresponding norm will be the standard operator norm \[\abs{f'(x)} = \sup_{\zeta\in E, \abs{\zeta}_E=1}\abs{f'(x)\cdot\zeta}_F.\]
\end{itemize}
\subsubsection{Spaces of conjugacies}
\begin{itemize}
\item We consider the set $\G^{\sigma}_s$ of germs of holomorphic diffeomorphisms on $\normal{T}_s$ such that $$\abs{\varphi - \id}_s \leq \sigma$$ as well as $$\abs{R_0 + (R_1 - \id)\cdot r}_s\leq\sigma.$$
\begin{figure}[h!]
\begin{picture}(0,0)%
\includegraphics{Fig7.eps}%
\end{picture}%
\setlength{\unitlength}{1657sp}%
\begingroup\makeatletter\ifx\SetFigFont\undefined%
\gdef\SetFigFont#1#2#3#4#5{%
  \reset@font\fontsize{#1}{#2pt}%
  \fontfamily{#3}\fontseries{#4}\fontshape{#5}%
  \selectfont}%
\fi\endgroup%
\begin{picture}(7905,1881)(1561,-5473)
\put(5581,-4426){\makebox(0,0)[lb]{\smash{{\SetFigFont{6}{7.2}{\familydefault}{\mddefault}{\updefault}{\color[rgb]{0,0,0}$g$}%
}}}}
\put(1846,-3751){\makebox(0,0)[lb]{\smash{{\SetFigFont{6}{7.2}{\familydefault}{\mddefault}{\updefault}{\color[rgb]{0,0,0}$\text{T}_{s+\sigma}$}%
}}}}
\put(1576,-4246){\makebox(0,0)[lb]{\smash{{\SetFigFont{6}{7.2}{\familydefault}{\mddefault}{\updefault}{\color[rgb]{0,.56,.56}$\text{T}_s$}%
}}}}
\put(1711,-4606){\makebox(0,0)[lb]{\smash{{\SetFigFont{6}{7.2}{\familydefault}{\mddefault}{\updefault}{\color[rgb]{0,0,0}$\text{T}_{0}$}%
}}}}
\put(9451,-4201){\makebox(0,0)[lb]{\smash{{\SetFigFont{6}{7.2}{\familydefault}{\mddefault}{\updefault}{\color[rgb]{0,.56,.56}$g(\text{T}_s)$}%
}}}}
\put(9406,-4651){\makebox(0,0)[lb]{\smash{{\SetFigFont{5}{6.0}{\familydefault}{\mddefault}{\updefault}{\color[rgb]{0,0,0}$g(\text{T}_0)$}%
}}}}
\end{picture}%

\caption{Deformed complex domain}
\end{figure}
\item We endow the tangent space at the identity $T_{\id}\G^{\sigma}_s$ with the norm $$\abs{\dot G}_s = \max (\abs{\dot{G_1}}_s,\abs{\dot G_2}_s)$$
\item Let $U_s(\alpha,A)$ be the subspace of $\A(\normal{T}_s,\T_\C\times\C)$ of those diffeomorphisms $P$ of the form $$P(\teta,r)= (\teta + 2\pi\alpha + O(r), (1 +A)\cdot r + O(r^2)). $$ We will indicate with $p_i$ and $P_i$ the coefficients of the order-$i$ term in $r$ in $\teta$ and $r$-directions respectively. 
\item If $G\in \G_{s}^\sigma$ and $P$ is a diffeomorphism over $G(\normal{T}_s)$ we define the following deformed norm $$ \abs{P}_{G,s}:= \abs{P\circ G}_s, $$ depending on $G$; this in order not to shrink artificially the domains of analyticity. The problem, in a smooth context, may be solved without changing the domain, by using plateau functions.
\end{itemize}
\subsubsection{The normal form operator}
\label{section operator}
Thanks to corollary \ref{cor well def} the following operator
\begin{equation}
\amat{llcl}{
\phi:\, & \G_{s+\sigma}^{\sigma}\times U_{s+\sigma}(\alpha,A)\times\Lambda &\to &\A(\normal{T}_s,\T_\C\times\C) \\ & (G,P,\lambda) &\mapsto & T_{\lambda}\circ G\circ P \circ G^{-1} }
\label{Moser operator diffeo}
\end{equation}
is now well defined. It would be more appropriate to write $\phi_{s,\sigma}$ but, since these operators commute with source and target spaces, we will refer to them simply as $\phi$. We will always assume that $0<s<s+\sigma<1$ and $\sigma<s$.
\subsubsection{Difference equation on the torus}
\begin{lemma} 
\label{lemma cohomological circle} Let $\alpha$ be Diophantine in the sense of \eqref{diophantine circle}, $g\in\mathcal{A}(\T_{s+\sigma})$ and let some constants $a,b\in\R\setminus\set{0}$ be given. There exist a unique $f\in\mathcal{A}(\T_{s})$ of zero average and a unique $\lambda\in\R$ such that the following is satisfied
\begin{equation}
\lambda + a f(\teta + 2\pi\alpha) - b f(\teta) = g(\teta),\quad \lambda = \media{g}.
\end{equation}
In particular $f$ satisfies \[\abs{f}_s\leq \frac{C}{\gamma\sigma^{\tau + 1}}\abs{g}_{s+\sigma},\] $C$ being a constant depending on $\tau$.
\end{lemma} 
\begin{proof}
Developing in Fourier series one has
\begin{equation*}
\lambda + \sum_k (a\, e^{i2\pi\,k\,\alpha} - b)f_k e^{ik\teta} = \sum_k g_k e^{i\,k\teta};
\end{equation*}
we get $\lambda = g_0 = \media{g}$ and
\begin{equation*}
f(\teta)= \sum_{k\neq 0} \frac{g_k}{a\, e^{i2\pi\,k\,\alpha} - b}e^{i\,k\teta}.
\end{equation*}
Remark that 
\begin{align*}
\abs{a\, e^{i2\pi\,k\,\alpha} - b}^2 &= (a - b)^2 \cos^2{\frac{2\pi k\alpha}{2}} + (a + b)^2\sin^2{\frac{2\pi k\alpha}{2}}\\&\geq (a + b)^2 \sin^2{\frac{2\pi k\alpha}{2}} = (a + b)^2\sin^2\frac{2\pi(k\alpha - l)}{2},
\end{align*}
with $l\in\Z$. Choosing $l\in\Z$ such that $\frac{2\pi(k\alpha - l)}{2}\in [-\frac{\pi}{2},\frac{\pi}{2}]$, we get
\begin{equation*}
\abs{a\, e^{i2\pi\,k\,\alpha} - b}\geq \frac{\pi^2}{4}\abs{a + b}\abs{k\alpha - l}\geq \frac{\pi^2}{4}\abs{a + b}\frac{\gamma}{\abs{k}^\tau}, 
\end{equation*}
using that $\abs{\sin x}\geq \frac{\pi}{2}\abs{x}, x\in [-\frac{\pi}{2},\frac{\pi}{2}]$ and condition \eqref{diophantine circle}. Hence the lemma.
\end{proof}
We address the reader interested to optimal estimates to \cite{Russmann:1976}.
\subsubsection{Inversion of $\phi'$ and bound of $\phi''$}
\begin{prop}
\label{linear proposition diffeo}
Let $0<s_0< s< s+\sigma$. There exists $\eps_0$ such that if $(G,P,\lambda)\in\G^{\eps_0}_{s_0}\times  U_{s+\sigma}(\alpha,A)\times\Lambda$, for all $\delta Q \in G^{\ast}\A(\normal{T}_{s+\sigma},\T_\C\times\C)$, there exists a unique triplet $(\delta G, \delta P, \delta\lambda)\in T_G\G_{s}\times \overrightarrow{ U_s(\alpha,A)}\times\Lambda$ such that 
\begin{equation}
\phi'(G_s,P,\lambda) \cdot (\delta G,\delta P,\delta \lambda) = \delta Q.
\label{linear equation diffeo}
\end{equation}
Moreover we have the following estimates 
\begin{equation}
\max (\abs{\delta G}_{s},\abs{\delta P}_{s},\abs{\delta\lambda})\leq \frac{C'}{\sigma^{\tau'}}\abs{\delta Q}_{G,s},
\end{equation}
$C'$ being a constant depending on $\abs{x}_{s+\sigma}$.
\label{proposition phi' diffeo}
\end{prop}
\begin{proof}
We have \[\delta (T_\lambda\circ G\circ P\circ G^{-1}) = T_{\delta\lambda}\circ (G\circ P\circ G^{-1}) + T'_{\lambda}\circ (G\circ P \circ G^{-1})\cdot \delta(G\circ P\circ G^{-1})\] hence \[M \cdot(\delta G\circ P + G'\circ P \cdot\delta P - G'\circ P\cdot P'\cdot G'^{-1}\cdot\delta G)\circ G^{-1} = \delta Q - T_{\delta\lambda}\circ(G\circ P \circ G^{-1}),\] where $M=\pa{\mat{1 & 0\\ 0 & 1 + B}}$ is $T'_\lambda$.\\
The data is $\delta Q$ while the unknowns are the "tangent vectors" $\delta P \in O(r)\times O(r^2)$, $\delta G$ (geometrically, a vector field along $g$) and $\delta\lambda \in \R^3$.\\
Pre-composing by $G$ we get the equivalent equation between germs along the standard torus $\text{T}_0$ (as opposed to $G (\text{T}_0)$):
\begin{equation*}
M\cdot(\delta G\circ P + G'\circ P\cdot \delta P - G'\circ P\cdot P'\cdot G'^{-1}\cdot\delta G) =  \delta Q \circ G - T_{\delta\lambda}\circ G\circ P ;
\end{equation*} 
multiplying both sides by $(G'^{-1}\circ P)M^{-1}$, we finally obtain
\begin{equation}
\dot G \circ P - P'\cdot \dot G + \delta P =  \inderiv{G}\circ P \cdot M^{-1}\delta Q\circ G + \inderiv{G}\circ P\cdot M^{-1} T_{\delta\lambda} \circ G\circ P,
\label{linear equation diffeo2}
\end{equation}
where $\dot G = \inderiv{G}\cdot\delta G$.\\
Remark that the term containing $T_{\delta\lambda}$ is not constant; expanding along $r=0$, it reads  \[T_{\dot\lambda} = \inderiv{G}\circ P\cdot M^{-1}\cdot T_{\delta\lambda}\circ G \circ P = (\dot\beta + O(r), \dot b + \dot B\cdot r + O(r^2)).\]
The vector field $\dot G$ (geometrically, a germ along $\text{T}_0$ of tangent vector fields) reads \[\dot G(\teta,r) = (\dot\varphi(\teta), \dot R_0(\teta) + \dot R_1(\teta)\cdot r).\] The problem is now: $G,\lambda,P, Q$ being given, find $\dot G,\delta P$ and $ \dot\lambda$, hence $\delta\lambda$ and $\delta g$. \\ We are interested in solving the equation up to the $0$-order in $r$ in the $\teta$-direction, and up to the first order in $r$ in the action direction; hence we consider the Taylor expansions along $\text{T}_0$ up to the needed order.\\
We remark that since $\delta P = (O(r), O(r^2))$, it will not intervene in the cohomological equations given out by \eqref{linear equation diffeo2}, but will be uniquely determined by identification of the reminders.\\
Let us proceed to solve the equation \eqref{linear equation diffeo2}, which splits into the following three 
\begin{align*}
\dot\varphi(\teta + 2\pi\alpha) - \dot\varphi(\teta) + p_1 \cdot\dot R_0 &= \dot q_0 + \dot\beta \\
\dot R_0(\teta + 2\pi\alpha) - (1 + A)\dot R_0(\teta) &= \dot Q_0 + \dot b\\
(1 + A)\dot R_1(\teta + 2\pi\alpha) - (1 + A) \dot R_1(\teta)  &=
 \dot Q_1 - ( 2P_2\cdot \dot R_0 + \dot R_0(\teta + 2\pi\alpha)\cdot p_1)+ \dot B.
\end{align*}
The first equation is the one straightening the tangential dynamics, while the second and the third are meant to relocate the torus and straighten the normal dynamics. \\ For the moment we solve the equations "modulo $\dot\lambda$". According to lemma \ref{lemma cohomological circle}, these tree equation admit unique analytic solutions once the right hand side is average free. \begin{itemize}
\item First, second equation has a solution $$ \dot R_0 = L_\alpha^{-1}(\dot Q_0 + \dot b - \bar{b}), $$ with $$\bar{b} =  \int_\T \dot Q_0 + \dot b\,\frac{d\teta}{2\pi},$$ and $$\abs{\dot R_0}_s\leq \frac{C}{(2 + A)\gamma^2\sigma^{\tau + 1}}\abs{\dot Q_0 + \dot b}_{s+\sigma}.$$
\item Second, we have \[\dot\varphi(\teta + 2\pi\alpha) - \dot\varphi(\teta) + p_1 \cdot \dot R_0 = \dot q_0 + \dot\beta - \bar\beta,\] with $\bar\beta = \int_\T \dot q_0 - p_1\cdot R_0 + \dot\beta \,\frac {d\teta}{2\pi},$ hence $$\dot\varphi = L_\alpha^{-1} (\dot q_0 + \dot\beta - \bar\beta), $$ satisfying \[\abs{\dot\varphi}_{s-\sigma}\leq \frac{C}{\gamma\sigma^{\tau + 2}}\abs{\dot q_0 + \dot\beta}_{s + \sigma}\]
\item Third, the solution of equation in $\dot\R_1$ is $$\dot R_1 = L_\alpha^{-1}(\tilde{Q_1} + \dot B - \bar B),$$
hiving noted $\tilde{Q_1}=  \dot Q_1 - ( 2P_2\cdot \dot R_0 + \dot R_0(\teta + 2\pi\alpha)\cdot p_1),$
satisfies $$\abs{\dot R_1}_{s-\sigma}\leq \frac{C}{(2 + 2A)\gamma\sigma^{1+\tau}}\abs{\tilde{Q_1} - \dot B}_{s+\sigma}.$$
\end{itemize}
We now handle the unique choice of $\delta\lambda = (\delta\beta, \delta b + \delta B\cdot r)$ occurring in the translation map $T_{\delta\lambda}$. If $\bar\lambda = (\bar\beta,\bar b + \bar B\cdot r)$, the map $f: \Lambda\to\Lambda,\, \delta\lambda\mapsto \bar\lambda$ is well defined. In particular when $G=\id$, $\frac{\partial f}{\partial\delta\lambda} = - \id$ and it will remain bounded away from $0$ if $G$ stays sufficiently close to the identity: $\abs{G -\id}_{s_0}\leq \eps_0,$ for $s_0<s$.  In particular, $-\bar\lambda$ is affine in $\delta\lambda$, the system to solve being triangular of the form $\average{a(G,\dot v) + A(G)\cdot\delta\lambda}=0$, with diagonal close to $1$ if the smalleness condition above is assumed. Under these conditions $f$ is a local diffeomorphism and $\delta\lambda$ such that $f(\delta\lambda)= 0$ is then uniquely determined, and $$\abs{\delta\lambda} \leq \frac{C}{\sigma^{\tau +1}}\abs{\delta Q}_{G,s+\sigma}.$$

Now, from the definition of $\dot G = G'^{-1}\cdot\delta G$ we get $\delta G = G'\cdot\dot G$, hence similar estimates hold for $\delta G$: \[\abs{\delta G}_{s-\sigma}\leq \frac{C}{\sigma^{\tau + 2}}(1 + \abs{G'-\id}_{s-\sigma})\abs{\delta Q}_{G,s+\sigma}\leq \frac{C}{\sigma^{\tau + 3}}\abs{\delta Q}_{G,s+\sigma}.\] Equation \eqref{linear equation diffeo2} uniquely determines $\delta P$.\\ Up to redefining $\sigma' = \sigma/2$ and $s' = s + \sigma$, we have the wanted estimates for all $s', \sigma' : s'< s' + \sigma'$.
\end{proof}
\subsubsection{Second derivative}
We consider the bilinear map $\phi''(x)$. We have
\begin{prop}[Boundness of $\phi''$] The bilinear map $\phi''(x)$ \[\phi''(x): (T_G\G^{\sigma}_{s+\sigma}\times \overrightarrow{U_{s+\sigma}(\alpha,A)}\times \Lambda)^{\tensor 2}\to \A(\normal{T}_s,\T_\C\times\C),\]
satisfies the estimates \[\abs{\phi''(x)\cdot\delta x^{\tensor 2}}_{G,s}\leq \frac{C''}{\sigma^{\tau''}}\abs{\delta x}^2_{s+\sigma},\] $C''$ being a constant depending on $\abs{x}_{s+\sigma}$.
\label{phi''}
\end{prop}

\begin{proof}
Differentiating twice $\phi$ we get 
\begin{align*}
 &-M \{[\delta G'\circ P\cdot\delta P + \delta G'\circ P\cdot\delta P + G''\circ P\cdot \delta P^2 - \pa{\delta G'\circ P + G''\circ P\cdot\delta P}\cdot P'\cdot G'^{-1}\cdot\delta G \\
&- G'\circ P\cdot\pa{\delta P'\cdot (- G'^{-1}\cdot\delta G'\cdot G'^{-1})\cdot\delta G}] \circ G^{-1} +\\
 &+[ \delta G'\circ P\cdot\delta P + \delta G'\circ P\cdot\delta P + G''\circ P\cdot \delta P^2 - \pa{\delta G'\circ P + G''\circ P\cdot\delta P}\cdot P'\cdot G'^{-1}\cdot\delta G \\
&- G'\circ P\cdot\pa{\delta P'\cdot (- G'^{-1}\cdot\delta G'\cdot G'^{-1})\cdot\delta G}]'\circ G^{-1}\cdot (- G'^{-1}\cdot\delta G)\circ G^{-1}\}. 
\end{align*}
Once we precompose with $G$, the estimate follows.
\end{proof}

Hypothesis of theorem \ref{teorema inversa} are satisfied; regularity propositions \ref{lipschitz}-\ref{smoothness} guarantee the uniqueness and smoothness of the normal form. Theorem \ref{theorem Moser diffeo} follows.

\subsection{The translated curve of R\"ussmann}
\label{Russmann theorem appendix}
The diffeomorphisms considered by R\"ussmann are of this kind in a neighborhood of $\normal{T}_0$
\begin{equation}
Q(\teta,r)= (\teta + 2\pi\alpha + t(r) + f(\teta,r), (1 +A) r + g(\teta,r)),
\label{Russmann diffeo}
\end{equation}
where $\alpha$ is Diophantine, $t(0)=0$ and $t'(r)>0$ for every $r$. This represents a perturbation of $$P^0 (\teta,r)= (\teta + \alpha + t(r), (1 +A) r),$$ for which $T_0$ is invariant and carries a rotation $2\pi\alpha$.
\begin{thm}[R\"ussmann]
\label{Russmann theorem} Fix $\alpha\in D_{\gamma,\tau}$ and $$P^0 (\teta,r)=(\teta + 2\pi\alpha + t(r) + O(r^2), (1 + A)r + O(r^2))\,\in U(\alpha,A)$$ such that $t(0)=0$ and $t'(r)>0$.\\  If $Q$ is close enough to $P^0$ there exists a unique analytic curve $\gamma\in\A(\T,\R)$, close to $r=0$, a diffeomorphism $\varphi$ of $\T$ close to the identity and $b\in\R$, close to $0$, such that \[Q (\teta, \gamma(\teta)) = (\conj(\teta), b + \gamma(\conj(\teta))).\]
\end{thm}
Actually in its original version the theorem is stated for $A=0$; to consider the more general case with $A$ close to $0$, does not bring any further difficulties. 

To deduce R\"ussmann's theorem from theorem \ref{theorem Moser diffeo} we need to get rid of the counter-terms $\beta$ and $B$.

\subsubsection{Elimination of $B$}
In order to deduce R\"ussmann's result from theorem \ref{theorem Moser diffeo} we need to reduce the number of translation terms of $T_\lambda$ to one, corresponding to the translation in the $r$-direction ($T_{\lambda= (0, b)}$). As we are not interested in keeping the same normal dynamics of the perturbed diffeomorphism $Q$, up to let $A$ vary and conjugate $Q$ to some well chosen $P_{A}$, we can indeed make the counter term $B\cdot r$ to be zero. \\ Let $\Lambda_2 = \set{\lambda = (\beta, b),\, \beta,b\in\R}$.
\begin{prop}
For every $P^0\in U_{s+\sigma}(\alpha,A_0)$ with $\alpha$ diophantine, there is a germ of $C^{\infty}$ maps
\begin{equation*}
\psi: \A(\normal{T}_{s+\sigma},\T_\C\times\C)\to \G_{s}\times U_{s}(\alpha,A)\times\Lambda_2,\quad Q\mapsto (G,P,\lambda),
\end{equation*} at $P^0\mapsto (\id, P^0, 0)$, such that $Q = T_\lambda\circ G\circ P\circ G^{-1}$.
\label{proposition B=0}
\end{prop}   
\begin{proof}
Denote $\phi_A$ the operator $\phi$, as now we want $A$ to vary. Let us write $P^0$ as $$P^0(\teta,r)=(\teta + 2\pi\alpha + O(r), (1 + A_0 - \delta A)\cdot r + \delta A\cdot r + O(r^2)),$$ and remark that \[P^0 = T_\lambda\circ P_{A},\quad \lambda = \pa{0, B\cdot r = (-\delta A + \frac{\delta A\cdot A_0}{1 + A_0})\cdot r}, \] where $ P_A = (\teta + 2\pi\alpha + O(r), (1 + A)\cdot r + O(r^2))$\footnote{The terms $O(r)$ and $O(r^2)$ contain a factor $(1 + \frac{\delta A}{1 + A_0 - \delta A})$} with $$ A = \pa{A_0 + \frac{\delta A(1 + A_0)}{1 + A_0 - \delta A}}.$$  According to theorem \ref{theorem Moser diffeo}, $\phi_A(\id, P_A, \lambda) = P^0.$ In particular \[\frac{\partial B}{\partial \delta A}_{|_{G=\id}}= -\id + \frac{A_0}{1+A_0},\] where $A_0$ is close to $0$. Hence, defining
\begin{equation*}
\hat{\psi}:\R\times \A(\normal{T}_{s+\sigma},\T_\C\times\C)\to \G_{s}\times U_{s}(\alpha,A)\times\Lambda,\, (A,Q)\mapsto \hat{\psi}_A(Q):=\phi^{-1}_A(Q) = (G, P, \lambda)
\end{equation*}
in the neighborhood of $(A_0, P^0)$, by the implicit function theorem locally for all $Q$ there exists a unique $\bar A$ such that $B (\bar A, Q) = 0$. It remains to define $\psi(Q)= \hat{\psi}(\bar A, Q)$.
\end{proof}
Whenever the interest lies on the translation of the curve and the dynamics tangential to it, do not care about the "final" $A$ and consider the situation that puts $B=0$. \\ In particular the graph of $\gamma(\teta):= R_0\circ\varphi^{-1}(\teta)$ is translated by $b$ and its dynamics is conjugated to $R_{2\pi\alpha}$, modulo the term $\beta$: \[Q(\teta, \gamma(\teta)) = (\beta + \conj(\teta), b + \gamma (\conj(\teta))).\]

\subsubsection{A family of translated curves}
Theorem \ref{theorem Moser diffeo} guarantees that any given diffeomorphism $Q$, sufficiently close to $P^0$ (see equation \eqref{P^0}), is of the form $Q = T_\lambda\circ G\circ P\circ G^{-1}$, with $G$, $P$ and $T_\lambda$
uniquely determined, implying the existence of a curve whose image by $Q$ is translated. Actually there exists a whole family of translated curves. Indeed, let us consider a parameter $c\in B_1(0)$ (the unit ball in $\R$) and the family of diffeomorphisms $Q_c (\teta, r) := Q (\teta, c + r)$ relative to the given $Q$. Considering the corresponding normal form operators $\phi_c$, the parametrized version of theorem \ref{theorem Moser diffeo} follows readily.\\
Now, if $Q_c$ is close enough to $P^0,$ proposition \ref{theorem B=0} asserts the existence of $(G_c, P_c, \lambda_c)\in\G\times U(\alpha,A)\times\Lambda_2$ such that \[Q_c = T_\lambda\circ G_c\circ P_c\circ G^{-1}_c.\] 
Hence we have a family of curves parametrized by $\tilde{c}= c + \int_\T\gamma\,\frac{d\teta}{2\pi}$,  $$Q(\teta, \tilde{c} + \tilde\gamma(\teta)) = (\beta + \conj(\teta), b + \tilde{c} + \tilde\gamma (\conj(\teta))),$$ where $\tilde{\gamma} = \gamma - \int_\T\gamma\,\frac{d\teta}{2\pi}.$
 
\subsubsection{Torsion property: elimination of $\beta$}
As we have seen in the last section, under smallness and diophantine conditions on $Q$, there exists a family of curves, parametrized by $c$, whose images are translated by $b$ in the $r$-direction and whose tangential dynamics is conjugated to the rotation $R_{2\pi\alpha}$, modulo the term $\beta\in\R$.\\ In order to get the dynamical conjugacy to the rotation stated by R\"ussmann's theorem, it is of fundamental importance for $Q$ to satisfy some torsion property, and this is provided by the request that $t'(r)>0$ for every $r$. Once this property is satisfied, in the light of the previous section, in order to prove R\"ussmann's theorem is suffices to show that there exists a unique $c$ close to $0$ such that $\beta = \beta(c) = 0$.  \\ We want to show that the map $c\mapsto \beta (c)$ is a local diffeomorphism.\\ It suffices to show this for the trivial perturbation $P^0_c$. The Taylor expansion of $P^0_c$ directly gives $c \mapsto \beta(c) = t(c) + O(c^2)$, which is a local diffeomorphism due to the torsion hypothesis on $Q$. Hence, the analogous map for $Q_c$, is a small perturbation of the previous one, hence a local diffeomorphism too. Then there exists a unique $c\in\R$ such that $\beta(c)= 0$.
\subsubsection{A comment about higher dimension}
In dimension higher than $2$, the analogue of R\"ussmann's theorem could not be possible: needing the matrix $B\in\Mat_m(\R)$, $m\geq 2$, to solve the third homological equation and having just $m$ characteristic exponents of $A$ at our disposal that we may vary as we did in the last sections, it is hopeless to kill the whole $B$. As a consequence, the obtained surface will undergo more than a simple translation. The analogous result is stated as follows.\\

Let $U(\alpha,A)$ be the space of germs of diffeomorphisms along $\normal{T}^n_0\subset \T^n\times\R^m$ of the form \[P(\teta,r)= (\teta + 2\pi\alpha + T(r) + O(r^2), (1 + A)\cdot r + O(r^2)),\] where $A\in\Mat_m(\R)$ is a diagolanizable matrix of real eigenvalues $a_j\neq 0$ for $j=1,\ldots,m$ and $T(r)$ is such that $T(0)=0$ and $T'(r)$ is invertible for all $r\in\R^m$.\\ Let also $\G$ be the space of germs of real analytic isomorphisms of the form $$g(\teta,r) = (\varphi(\teta), R_0(\teta) + R_1(\teta)\cdot r),$$ $\varphi$ being a diffeomorphism of $\T^n$ fixing the origin, $R_0$ and $R_1$ an $\R^m$-valued and $\Mat_m(\R)$-valued functions defined on $\T^n$. \\ Let $\Lambda_{m^2} = \set{\lambda = (0, b + B\cdot r),\, b\in\R^m, B\in\Mat_m(\R)}$, where $B\in\Mat_m(\R)$ has the $(m^2 - m)$ non diagonal entries different from $0$.
\begin{thm}
Let $\alpha$ be Diophantine. If $Q$ is sufficiently close to $P^0\in U(\alpha,A_0)$, there exists a unique $(G,P,\lambda)\in \G\times U(\alpha,A)\times\Lambda_{m^2}$, close to $(\id, P^0, 0)$ such that \[Q = T_\lambda\circ G\circ P \circ G^{-1}.\]
\end{thm}
The proof follows from the generalization in dimension $\geq 2$ of theorem \ref{theorem Moser diffeo} and the elimination of parameters, which are not hard to recover. We just give some guiding remarks.
\begin{itemize}[leftmargin=*]
\item First note that the constant matrix $M$ appearing in the proof of proposition \ref{linear proposition diffeo} gets the form $M=\pa{\mat{\id & 0\\ 0 & \id + B}}$ and that equation \eqref{linear equation diffeo2} splits into three homological equations of the form
\begin{align*}
\dot\varphi(\teta + 2\pi\alpha) - \dot\varphi(\teta) + p_1 \cdot\dot R_0 &= \dot q_0 + \dot\beta \\
\dot R_0(\teta + 2\pi\alpha) - (\id + A)\cdot\dot R_0(\teta) &= \dot Q_0 + \dot b\\
\dot R_1(\teta + 2\pi\alpha)\cdot (\id + A) - (\id + A)\cdot \dot R_1(\teta)  &=
 \dot Q_1 +  \dot B.
\end{align*}
In particular,
\begin{itemize}
\item equation determining $\dot{R}_0$ is readily solved by applying lemma \ref{lemma cohomological circle} component-wise, if $A$ is diagonal. If $A$ is diagonalizable and $P\in\GL_m(\R)$ is such that $P^{-1}A P$ is diagonal, left multiply the equation by $P^{-1}$ and solve it for $\tilde { R}_0 = P^{-1} \dot R_0$ and $\tilde {Q}_0= P^{-1} \dot Q_0$ 
\item equation determining the matrix $\dot R_1$ consists, when $A$ is diagonal of eigenvalues $a_1,\ldots, a_m$, in solving $m$ equations of the form \[(1 + a_j) \pa{\dot R^j_{j}(\teta + 2\pi\alpha) - \dot R^j_{j}(\teta)} = \dot Q^j_{{j}},\quad j=1,\ldots, m\] and $m^2-m$ equations of the form \[(1+ a_j)\dot R^{i}_j(\teta + 2\pi\alpha) - (1 + a_i)\dot R^{i}_{j}(\teta) = \dot Q^i_j(\teta),\quad \forall i\neq j, i,j = 1,\ldots, m.\] If $A$ is diagonalizable, and $P\in\GL_m(\R)$ the transition matrix, left and right multiply the equation by $P^{-1}$ and $P$ respectively, then solve.
\end{itemize}
\item Eventually, the torsion property on $T(r)$ guarantees the elimination of the $\beta$-obstruction to the rotation conjugacy. The analogue of proposition \ref{theorem B=0} holds. 
\end{itemize}
}

\subsection*{Acknowledgments} \co{This work would have never seen the light without the mathematical (and moral) support and advises of A. Chenciner and J. Féjoz all the way through my Ph.D. I'm grateful and in debt with both of them. Thank you to T. Castan, B. Fayad, J. Laskar, J.-P. Marco, P. Mastrolia, L. Niederman and P. Robutel for the enlightening discussions we had and their constant interest in this (and future) work.}


\nocite{Calleja-Celletti-delaLlave:KAM}
\nocite{Herman:1983}
\nocite{Sevryuk:1999}
\nocite{Sevryuk:2006}
\nocite{Fayad-Krikorian:2009}
\nocite{Herman:1979}
\nocite{Herman:notes}
\nocite{Arnold:1963}
\nocite{Cheng-Sun:1989}
\end{document}